\newcommand{\R}{\mathbb{R}}
\newcommand{\Hy}{\mathbb{H}}
\newcommand{\C}{\mathbb{C}}
\newcommand{\D}{\mathbb{D}}
\newcommand{\n}{\vec{n}}
\newcommand{\s}{\mathbb{S}}
\newcommand{\Ar}{\mathring{A}}
\newcommand{\zb}{\bar{z}}
\tikzset { domaine/.style 2 args={domain=#1:#2} }
\newtheorem{theo}{Theorem}[section]
\newtheorem*{theo*}{Theorem}
\newtheorem{prop}[theo]{Proposition}
\newtheorem*{prop*}{Proposition}
\newtheorem{cor}{Corollary}[section]
\newtheorem*{cor*}{Corollary}
\newtheorem{de}{Definition }[section]
\newtheorem{remark}{Remark}[section]
\newtheorem{ex}{Example}[section]
\newcommand{\nocontentsline}[3]{}
\newcommand{\tocless}[2]{\bgroup\let\addcontentsline=\nocontentsline#1{#2}\egroup}
\title{Conformal Gauss Map Geometry and Application to Willmore Surfaces in Model Spaces}
\author{Nicolas Marque \thanks{Institut Mathématique de Jussieu, Paris VII, Bâtiment Sophie Germain, Case 7052, 75205 Paris Cedex 13, France. E-mail address : nicolas.marque@imj-prg.fr}}
\date{\today} 
\begin{document}

\maketitle
\abstract{ In this paper we make a detailed and self-contained study of the conformal Gauss map. Then, starting from the seminal work of R. Bryant \cite{bibdualitytheorem} and the notion of conformal Gauss map, we recover many fundamental properties of Willmore surfaces. We also get new results like some characterizations of minimal and constant mean curvature (CMC) surfaces in term of their conformal Gauss map behavior.}

\tableofcontents
\hspace{0.5cm}
\section{Introduction}

The following is primarily concerned with the study of the Moebius geometry of surfaces through the lense of the conformal Gauss map. 
This generalization of the osculating circles (see example \ref{ouonintroduitlaGaussconforme} for a proper definition) arose as a more relevant tool for conformal geometry than the classical Gauss map. Present as early as  1923 in G. Thomsen's works (see \cite{bibthomsen}), it proved a precious auxiliary in the understanding of Willmore surfaces.

Given a Riemann surface $\Sigma$ and an immersion $\Phi \, : \, \Sigma \rightarrow \R^3$ of first fundamental form $g$, of Gauss map $\n$, of mean curvature $H$ and tracefree second fundamental form $\Ar$, its Willmore energy is defined as $$W(\Phi) = \int_\Sigma H^2 dvol_g.$$  Willmore surfaces are critical points of the Willmore energy.  They satisfy the Willmore equation : 

$$\mathcal{W} (\Phi) := \Delta H + \big| \Ar \big|^2 H = 0.$$

The Willmore energy was already  under scrutiny in the XIXth century in the study of elastic plates, but to our knowledge W. Blaschke was the first to state (see \cite{MR0076373}) its invariance by conformal diffeomorphisms of $\R^3$  (which was later rediscovered by T. Willmore, see  \cite{bibwill}) and to study it in the context of conformal geometry. This invariance by conformal diffeomorphisms is key in studying Willmore surfaces. Indeed T. Rivière introduced conservation laws satisfied by Willmore immersions  (see  (7.15), (7.16) and (7.30c) in \cite{bibpcmi}) and the corresponding conserved quantities. Y. Bernard then showed  (see \cite{bibnoetherwill}) that these quantites were a consequence of the invariance of $W$. We will denote them $V_{\mathrm{tra}}, V_{\mathrm{rot}}, V_{\mathrm{dil}}$ and $V_{\mathrm{inv}}$, corresponding respectively to translations, rotations, dilations and inversions (see theorem \ref{thmV} for the precise definition). These conserved quantities take center stage in T. Rivière's proof of the regularity of Willmore surfaces (see \cite{bibanalysisaspects}). W. Blaschke also found out, and R. Bryant rediscovered in \cite{bibdualitytheorem}, that $\Phi (\Sigma)$ is a Willmore surface if and only if its conformal Gauss map $Y$ is a minimal branched immersion. In essence the conformal Gauss map is to Willmore surfaces what the Gauss map is to Constant Mean Curvature (CMC) surfaces.  

The exploitation of this link has proved fruitful numerous times. For instance R. Bryant introduced the holomorphic quartic $\mathcal{Q} = \langle Y_{zz}, Y_{zz} \rangle dz^4$ and showed in his seminal work \cite{bibdualitytheorem} that Willmore spheres were in fact inversions of minimal surfaces (see also Eschenburg's lecture notes \cite{bibeschenburg}). The resulting classification of Willmore spheres has far reaching consequences. A. Michelat and T. Rivière later extended it to branched Willmore spheres in \cite{michelatclassifi}. On  a somewhat different register F. Hélein used integrable systems on the conformal Gauss map to induce a  Weierstrass representation of Willmore immersions (see \cite{loopiloop} or  \cite{weierstrassWillmore} for a simplified look). From this he extracted a necessary condition for a Willmore immersion to be the conformal transform of a minimal immersion in $\R^3$, $\s^3$ or $\Hy^3$, see theorem 10 in \cite{loopiloop}. However due to the non-explicit nature of his Weierstrass data, what this condition exactly entails remains somewhat unclear. 

Determining necessary and sufficient conditions for a surface to be the conformal transform of a minimal (or CMC) surface in one of the three models ($\R^3$, $\s^3$ or $\Hy^3$) is in fact another application of the notions surrounding the conformal Gauss map and Bryant's functional.  Several results offering an interesting panorama revolved around the notion of isothermic immersion. For instance we refer the reader to F. Burstall, F. Pedit and U. Pinkall's work in \cite{MR1955628}, while combining theorem 2.2 in B. Palmer's work \cite{bibpalmerconfgaussmap} (attributed to G. Thomsen) and theorem 4.4 in \cite{bobohle} (attributed to private communications from K. Voss) yields  the following theorem.
\begin{theo}
\label{lesresultatsquonavaitdeja}
Let $\Phi \, : \, \D \rightarrow \R^3$ be a smooth conformal immersion. We assume that $\Phi$ has no umbilic points.  $\Phi$ is the conformal transform of a CMC immersion in one of the three models if and only if $\mathcal{Q}$ is holomorphic and $\Phi$ is isothermic\footnote{see Definition \ref{defisothermicimmersions} for a precise definition}.
\end{theo}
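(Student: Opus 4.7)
The plan is to treat each direction separately, with the backward implication carrying the bulk of the work. For the forward direction, starting from a CMC immersion $\tilde\Phi$ in one of the three model spaces, I would first invoke the classical fact that CMC surfaces in any space form admit isothermic curvature line parameterizations, which combined with the Möbius invariance of isothermicity yields that $\Phi$ itself is isothermic. To obtain holomorphy of $\mathcal{Q}$, I would exploit that CMC surfaces in the three model spaces are critical points of the conformally invariant functional $\int(H^2 + \bar K)\, dvol_g$, where $\bar K$ is the sectional curvature of the ambient model. After conformal transform to $\R^3$ this functional coincides with $W$ up to a topological term, and its Euler--Lagrange equation translates, via the conformal Gauss map formalism, into the holomorphy of $\mathcal{Q}$.

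For the backward implication, I would work in conformal curvature line coordinates $z = x+iy$ on $\D$, which are available thanks to isothermicity and the absence of umbilics. In these coordinates the metric takes the form $g = e^{2u}|dz|^2$, the Hopf differential is real, and one has an explicit expression of the form $Y = H\, \Phi_\ell + N_\ell$ for the conformal Gauss map in an appropriate lift to $\R^{4,1}$. A direct computation of $\langle Y_{zz}, Y_{zz}\rangle$ in these coordinates rewrites the holomorphy of $\mathcal{Q}$ as a PDE mixing $H$, $u$, and the principal curvature difference $k_1 - k_2$, which upon combination with the Codazzi--Mainardi equations produces a first-order integrability condition along both curvature directions.

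The main obstacle---and the heart of the proof---is to convert this differential constraint into the existence of a concrete Möbius transformation $T$ of $\R^3$ such that $T \circ \Phi$ has constant mean curvature in some model. My approach would be to construct an $\R^{4,1}$-valued constant vector $v$, built from $\Phi$, $\n$, $H$ and the isothermic curvature data, whose covariant constancy is precisely enforced by the PDE derived above. The causal character of $v$ in the Minkowski metric (timelike, spacelike, or null) would then distinguish the three cases $\Hy^3$, $\s^3$ and $\R^3$, and the Möbius transformation is the unique one sending $v$ to the canonical base point of the corresponding model in the light cone picture. A final computation in the new parameterization verifies that $H$ has become constant. I expect the construction of $v$ to be the delicate point, following the same spirit as the translation, rotation, dilation and inversion conserved quantities $V_\mathrm{tra}, V_\mathrm{rot}, V_\mathrm{dil}, V_\mathrm{inv}$ recalled earlier.
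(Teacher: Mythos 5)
Your strategy for the backward (hard) direction is essentially the one the paper uses: decompose a putative constant vector $v\in\R^{4,1}$ in the moving frame $(Y,Y_z,Y_{\zb},\nu,\nu^*)$ attached to the conformal Gauss map, show that isothermicity and the holomorphy of $\mathcal{Q}$ are exactly what is needed to choose the coefficients so that $v_z=0$, and read off the model space from the sign of $\langle v,v\rangle$ via the equivalence ``conformally CMC in $\R^3$/$\s^3$/$\Hy^3$ $\Leftrightarrow$ $Y$ lies in an affine hyperplane with lightlike/timelike/spacelike normal'' (the paper's theorem \ref{conformementCMC}, which itself rests on the identities $H=Y_5-Y_4$, $h=Y_5$, $H^Z=-Y_4$ and the equivariance $Y_\varphi=MY$). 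Your final step, applying the M\"obius transformation that normalizes $v$ and checking that $H$ becomes constant, is exactly what that theorem packages; so this part of the plan is sound and matches the paper.

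The forward direction as you state it contains a genuine error. A CMC immersion with $H=H_0\neq 0$ in $\R^3$ is \emph{not} a critical point of $\int (H^2+\bar K)\,dvol_g$: the Euler--Lagrange equation of that functional is the Willmore equation $\Delta H+|\Ar|^2H=0$, which for constant $H$ reduces to $|\Ar|^2H_0=0$, impossible away from umbilic points unless $H_0=0$ (a round cylinder is CMC and not Willmore). So you cannot derive the holomorphy of $\mathcal{Q}$ from the Euler--Lagrange equation of a conformally invariant functional. What is true is that such surfaces are \emph{constrained} Willmore: by Codazzi, constancy of $H$ forces the Hopf differential $\Omega$ to be holomorphic, whence $\mathcal{W}(\Phi)=\frac{|\Omega|^2e^{-2\lambda}}{2}H_0=\overline{\left(\tfrac{H_0\Omega}{2}\right)}\Omega e^{-2\lambda}$ with $\tfrac{H_0\Omega}{2}$ holomorphic; this is precisely the criterion of Proposition \ref{Qtorduholomorphic}, which yields $\mathcal{Q}$ holomorphic, and $\mathcal{Q}$ is then transported to $\Phi$ by its M\"obius invariance ($\langle MY_{zz},MY_{zz}\rangle=\langle Y_{zz},Y_{zz}\rangle$). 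Alternatively --- and this is what the paper does --- the forward direction comes for free from the same hyperplane analysis: if $\langle Y,v\rangle=\eta$ with $v$ constant, decomposing $v$ in the moving frame produces an overdetermined system whose solvability forces both isothermicity and $\mathcal{W}_{\s^3}(X)=\overline{f}\omega e^{-2\Lambda}$ with $f$ holomorphic, hence $\mathcal{Q}$ holomorphic. Either fix is short, but the step as you wrote it does not work.
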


Our aim will be threefold. First we intend to offer an organic, self-contained and comprehensive view of the notions orbiting around the conformal Gauss map while formulating them for immersions in the three studied models : $\R^3$, $\s^3$ and $\Hy^3$. Section \ref{section1} and \ref{section2} will be devoted to this endeavour.  Our study will yield two notable results. First is a description of the action on the model spaces of elements in $SO(4,1)$ through conformal diffeomorphisms, as shown by the following proposition.

\begin{prop}
$SO(4,1)$ acts transitively through conformal diffeomorphisms on $\R^3 \cup \{ \infty \}$ and $\s^3$. More precisely : 
\begin{itemize}
\item
  Let $M\in SO(4,1)$ and $X \in \s^3 \subset \R^4$. Then the action of $M$ on $X$ is given by :   
$$  M.X =  \frac{V_\circ}{V_5}  $$ where $$V = M \begin{pmatrix} X \\ 1 \end{pmatrix} = \begin{pmatrix} V_\circ \\ V_5 \end{pmatrix}.$$
\item
  Let $M\in SO(4,1)$ and $x \in \R^3$. Then the action of $M$ on $x$  is given by :
$$    M.x = \frac{y_\diamond}{y_5 -y_4} $$ where $$y =  M \begin{pmatrix} x \\ \frac{|x|^2-1}{2} \\ \frac{|x|^2+1}{2} \end{pmatrix}  = \begin{pmatrix} y_\diamond \\ y_4 \\y_5 \end{pmatrix}.$$ 
\end{itemize}
\end{prop}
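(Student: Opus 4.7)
The plan is to exploit the standard identification of $\s^3$ and $\R^3 \cup \{\infty\}$ with the projectivization of the null cone of $\R^{4,1}$ (equipped with the Lorentzian form $+,+,+,+,-$), on which $SO(4,1)$ acts linearly and preserves nullity. Once the embeddings are shown to land on this cone, the formulas amount to reading off representatives in distinguished affine charts.

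First, I would verify that the two maps
\[
\iota_\s \, : \, X \in \s^3 \longmapsto \begin{pmatrix} X \\ 1 \end{pmatrix}, \qquad \iota_\R \, : \, x \in \R^3 \longmapsto \begin{pmatrix} x \\ \tfrac{|x|^2-1}{2} \\ \tfrac{|x|^2+1}{2} \end{pmatrix}
\]
send their domains into the null cone $\mathcal{C} = \{ Z \in \R^{4,1} \, : \, \langle Z, Z \rangle = 0 \}$: for $\iota_\s$ this is just $|X|^2 = 1$, and for $\iota_\R$ a direct computation gives $|x|^2 + \bigl(\tfrac{|x|^2-1}{2}\bigr)^2 - \bigl(\tfrac{|x|^2+1}{2}\bigr)^2 = 0$. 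I would also observe that $\iota_\s$ and $\iota_\R$ are, up to positive scaling, sections of two natural projections from $\mathcal{C} \setminus \{0\}$: the chart $V_5 \neq 0$, in which a null vector $V$ satisfies $|V_\circ|^2 = V_5^2$, so that $V_\circ/V_5 \in \s^3$; and the chart $y_5 - y_4 \neq 0$, where nullity gives $|y_\diamond|^2 = y_5^2 - y_4^2 = (y_5-y_4)(y_5+y_4)$, so that $y_\diamond/(y_5-y_4)$ is a well-defined point of $\R^3$ and corresponds to the image of $\iota_\R$ normalized by $y_5 - y_4 = 1$.

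Next, since any $M \in SO(4,1)$ preserves $\langle \cdot, \cdot \rangle$, the vector $M \iota_\s(X)$ (resp. $M \iota_\R(x)$) lies again on $\mathcal{C}$. Projecting back through the appropriate section yields precisely the announced formulas $M.X = V_\circ / V_5$ and $M.x = y_\diamond / (y_5 - y_4)$, defined on the open set where the denominator is nonzero; in the $\R^3$ case the locus $y_5 = y_4$ corresponds to the point at infinity, extending the action naturally to $\R^3 \cup \{\infty\}$.

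For transitivity, I would argue on the projectivized null cone $\mathbb{P}\mathcal{C}$: $SO(4,1)$ already contains the rotations $SO(4) \subset SO(4,1)$ acting transitively on $\s^3$, and the boost fixing a suitable spacelike plane rescales $V_5$, allowing us to move any null line to any other. Conformality of the induced action on each model is either invoked from the previous section of the paper or verified by pulling back the round/flat metric along $\iota_\s$, $\iota_\R$ and observing that $M$ multiplies it by a scalar function coming from the change of representative on $\mathcal{C}$. The main technical point to handle carefully is the chart change between the two descriptions and the behaviour at the singular locus $\{V_5=0\}$ or $\{y_5=y_4\}$, which corresponds precisely to the preimages of $\infty$ under the Möbius action; beyond this, all steps are straightforward linear algebra.
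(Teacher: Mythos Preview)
Your approach is essentially the same as the paper's: both identify $\s^3$ and $\R^3\cup\{\infty\}$ with sections of the projectivized null cone of $\R^{4,1}$, let $SO(4,1)$ act linearly on the cone, and read off the formulas by renormalizing in the charts $\{V_5=1\}$ and $\{y_5-y_4=1\}$. The paper carries out the conformality check by a direct computation yielding $\langle\partial_i\varphi_M,\partial_j\varphi_M\rangle=\delta_{ij}/(y_5-y_4)^2$, and establishes that the whole M\"obius group is reached by exhibiting explicit $SO(4,1)$ matrices for translations, rotations, dilations and the inversion (via Liouville's theorem), whereas you sketch conformality and obtain transitivity more quickly from the $SO(4)$ subgroup; but these are differences of execution, not of strategy.
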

Second goal of the paper is a geometric characterization of the conformal Gauss map for conformally CMC immersions. More precisely, we say that  $\Phi \, : \, \D \rightarrow \R^3$ (respectively $X \, : \, \D \rightarrow \s^3$, $Z \, : \, \D \rightarrow \Hy^3$) is conformally CMC (respectively minimal) if and only if there exists a conformal diffeomorphism $\varphi$ of $\R^3 \cup \{ \infty \}$ (respectively $\s^3$, $\Hy^3$) such that $\varphi \circ \Phi$ (respectively $\varphi \circ X$, $\varphi \circ Z$) has constant mean curvature (respectively is minimal) in $\R^3$ (respectively $\s^3$, $\Hy^3$). We have the following theorem.
\begin{theo}
\label{lepetitlabelquisertarien}
Let $\Phi$ be a smooth conformal immersion from $\D$ to $\R^3$, and $X$ (respectively $Z$) its representation in $\s^3$ (respectively $\mathbb{H}^3$) through $\pi$ (respectively $\tilde \pi$)\footnote{see subsection \ref{representationsinthethreemodels} for the precise assessment of the representations in the three models, and subsection \ref{localconformalequivalences} for the definition of the projections $\pi$ and $\tilde \pi$. }. Let $Y$ be its conformal Gauss map. We assume the  set of umbilic points of $\Phi$ (or equivalently, see (\ref{projectionstereo5}) and (\ref{projectionhyper5}), $X$ or $Z$)  to be nowhere dense. Then 
\begin{itemize}
\item
$\Phi$ is conformally CMC (respectively minimal) in $\R^3$ if and only if $Y$ lies in an affine (respectively linear) hyperplane of $\R^{4,1}$ with lightlike normal.
\item 
$X$  is conformally CMC (respectively minimal) in $\s^3$ if and only if $Y$ lies in an affine (respectively linear) hyperplane of $\R^{4,1}$ with timelike normal.
\item
$Z$ is conformally CMC (respectively minimal) in $\Hy^3$ if and only if $Y$ lies in an affine (respectively linear) hyperplane of $\R^{4,1}$ with spacelike normal.
\end{itemize}
\end{theo}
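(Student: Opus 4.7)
The plan rests on two structural ingredients that should be available from the preceding sections. First, the conformal Gauss map transforms $SO(4,1)$-equivariantly under conformal diffeomorphisms: if $\psi$ corresponds to $M \in SO(4,1)$, then $Y_{\psi \circ \Phi} = M \cdot Y_\Phi$ pointwise. Second, the scalar products of $Y$ with three canonical vectors associated to the three model spaces encode the mean curvatures of the immersion in those models.

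First I would fix three canonical vectors in $\R^{4,1}$: a lightlike $N_\R$ corresponding to the point at infinity of the $\R^3$ representation, a timelike unit $N_\s$ corresponding to the extra coordinate of the embedding $\s^3 \hookrightarrow \R^4$ underlying $\pi$, and a spacelike unit $N_\Hy$ corresponding to the hyperbolic direction underlying $\tilde \pi$. Using the explicit formulas for $Y$ in each representation---polynomial expressions in the position, unit normal and mean curvature of the immersion in that model---I would compute each $\langle Y, N_0 \rangle$ and verify that it coincides, up to harmless nonzero constants, with the mean curvature $H$ in the corresponding model. This immediately yields: $H$ is constant if and only if $\langle Y, N_0 \rangle$ is constant, and $H \equiv 0$ if and only if $\langle Y, N_0 \rangle \equiv 0$.

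To pass from these standard-position statements to the general hyperplane formulation, I would use the transitivity of $SO(4,1)$ on the three causal orbits (lightlike rays, timelike unit vectors, spacelike unit vectors). For the if direction, if $Y$ lies in a hyperplane with normal $N$ of the prescribed causal type, I pick $M \in SO(4,1)$ sending $N$ to the corresponding $N_0$; by equivariance $M \cdot Y$ is the conformal Gauss map of $M \cdot \Phi$, and it lies in the hyperplane $\{\langle \cdot, N_0 \rangle = c\}$, so $M \cdot \Phi$ is CMC (minimal if $c=0$) in the appropriate model, which is exactly conformal CMC (resp.\ minimal) for $\Phi$. The only-if direction is symmetric: apply a conformal diffeomorphism making the surface CMC in the relevant model, use the standard-position identification to place its conformal Gauss map in $\{\langle \cdot, N_0 \rangle = c\}$, then pull back by the diffeomorphism to obtain the hyperplane containing the original $Y$. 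The affine-versus-linear dichotomy matches the CMC-versus-minimal dichotomy via the value of the constant $c$.

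The main obstacle will be the clean algebraic identification of $\langle Y, N_0 \rangle$ with the mean curvature in each of the three models. This requires the explicit formulas for $Y$ developed in earlier sections, careful bookkeeping across the projections $\pi$ and $\tilde \pi$, and the classical conversion laws for the mean curvature under stereographic and hyperbolic projection. The non-umbilic assumption enters here to exclude degenerate situations: totally umbilic surfaces have a constant conformal Gauss map lying trivially in hyperplanes of every causal type, and more generally at umbilic points $Y$ may fail to be an immersion. Requiring the umbilic set to be nowhere dense guarantees that the pointwise identity $\langle Y, N \rangle = c$ is genuinely informative on an open dense subset of $\D$, and hence, by continuity, forces the corresponding mean curvature to be constant throughout $\D$.
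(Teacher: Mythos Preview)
Your proposal is correct and follows essentially the same route as the paper: identify $\langle Y, v_l\rangle$, $\langle Y, v_t\rangle$, $\langle Y, v_s\rangle$ with the mean curvatures $H$, $h$, $H^Z$ (this is (\ref{HenfonctiondeY})), invoke the equivariance $Y_{\varphi} = M Y$ (Proposition~\ref{modificationYtransformationconforme}), and conclude via the transitivity of $SO(4,1)$ on each causal type. One small correction on the role of the nowhere-dense-umbilic hypothesis: in the paper it is used not to rule out trivially hyperplanar $Y$, but because the equivariance $Y_\varphi = MY$ is proved via the uniqueness of the conformal enveloping spherical congruence (Theorem~\ref{Yseulecongruenceenveloppante}), which requires that hypothesis.
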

Parts of theorem \ref{lepetitlabelquisertarien} (concerning immersions in $\R^3$) can be found in \cite{bibpalmerconfgaussmap} or in \cite{dorfwangtarxiv1} in arbitrary codimension. 
A notable part of section 2 and 3, dedicated to those results, will be based on J-H. Eschenburg's and B. Palmer's previous surveys (respectively  \cite{bibeschenburg} and\cite{bibpalmerconfgaussmap}).\\

We will then address how one can further study Willmore immersions through conformal maps. We will show that the conserved quantities $V_{\mathrm{tra}}$, $V_{\mathrm{rot}}$, $V_{\mathrm{dil}}$ and $V_{\mathrm{inv}}$  can be read on a matrix based on the conformal Gauss map and its invariances, thanks to the following theorem.
\begin{theo}
Let $\Phi \, : \, \D \rightarrow \R^3$ be a Willmore immersion, conformal,  of conformal Gauss map $Y$. 
Let 
$$\begin{aligned} 
\mu = \begin{pmatrix}\nabla Y_i Y_j - Y_i \nabla Y_j \end{pmatrix} = \nabla Y Y^T - Y \nabla Y^T. \end{aligned}$$
Then 
\sbox0{$\begin{matrix}0 & - \tilde V_{\mathrm{rot} \,3 } &  \tilde V_{\mathrm{rot} \, 2}\\\tilde V_{\mathrm{rot} \, 3} & 0 & - \tilde V_{\mathrm{rot} \, 1}\\- \tilde V_{\mathrm{rot} \, 2} & \tilde V_{\mathrm{rot} \, 1} & 0\end{matrix}$}

$$2 \mu = \begin{pmatrix} U &-\frac{V_{\mathrm{tra}} - V_{\mathrm{inv}}}{2} & \frac{ V_{\mathrm{tra}} + V_{\mathrm{inv}} }{2} \\ \left(\frac{V_{\mathrm{tra}} - V_{\mathrm{inv}}}{2} \right)^T & 0 & V_{\mathrm{dil}} \\ -\left(\frac{V_{\mathrm{inv}}+ V_{\mathrm{tra}} }{2} \right)^T & -V_{\mathrm{dil}} & 0 \end{pmatrix}$$
where $V_{\mathrm{tra}}, V_{\mathrm{dil}}, V_{\mathrm{rot}} $ and $V_{\mathrm{inv}}$ are defined in theorem \ref{equationsdivergencewillmore} and $$ U = \left(\begin{matrix}0 & - \tilde V_{\mathrm{rot} \,3 } &  \tilde V_{\mathrm{rot} \, 2}\\\tilde V_{\mathrm{rot} \, 3} & 0 & - \tilde V_{\mathrm{rot} \, 1}\\- \tilde V_{\mathrm{rot} \, 2} & \tilde V_{\mathrm{rot} \, 1} & 0\end{matrix} \right)$$ with $ \tilde  V_{\mathrm{rot}} = V_{\mathrm{rot}} + 2 \nabla^\perp \n.$
\end{theo}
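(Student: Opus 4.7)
The plan is to carry out a direct componentwise computation of the antisymmetric matrix $\mu = \nabla Y \, Y^T - Y \,\nabla Y^T$ and then match each block against the explicit formulas for $V_{\mathrm{tra}}, V_{\mathrm{rot}}, V_{\mathrm{dil}}, V_{\mathrm{inv}}$ recorded in theorem \ref{equationsdivergencewillmore}. Since the conformal Gauss map has the standard decomposition (recalled in section \ref{section2}) of the form
$$
Y \;=\; H \begin{pmatrix} \Phi \\ \tfrac{|\Phi|^2-1}{2} \\ \tfrac{|\Phi|^2+1}{2} \end{pmatrix} \;+\; \begin{pmatrix} \vec n \\ \langle \Phi, \vec n \rangle \\ \langle \Phi, \vec n \rangle \end{pmatrix},
$$
the first step is to differentiate this expression, reading off $\nabla Y$ as a sum of terms in $\nabla H$, $\nabla \Phi$, and $\nabla \vec n$.

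The second step is to substitute into $\mu = \nabla Y \, Y^T - Y\,\nabla Y^T$ and organize the result into the $3+1+1$ block structure suggested by the statement. The last two rows/columns of $\mu$ (involving the $4$th and $5$th coordinates of $Y$) should, after using $|\Phi|^2$-type simplifications and the identity $\langle \Phi , \vec n \rangle$ that appears in both the upper and lower ``light-cone" coordinates, collapse into combinations of $H\Phi, \nabla H, \vec n, \nabla \vec n$ that are precisely the Noether-type conserved vectors. The bottom-right $2\times 2$ block will reduce to the skew matrix built from $V_{\mathrm{dil}}$, since $Y_4$ and $Y_5$ only differ by a term invariant under the scale direction. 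The two off-diagonal blocks between the $\Phi$-components and the $(Y_4,Y_5)$-components should similarly collect into $\tfrac12(V_{\mathrm{tra}} \pm V_{\mathrm{inv}})$, because $V_{\mathrm{tra}}$ corresponds to translations (the $\Phi$-independent part) while $V_{\mathrm{inv}}$ corresponds to inversions (the $|\Phi|^2$-weighted part), and these are exactly the two combinations isolated by the $Y_4 \mp Y_5$ antisymmetrization.

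The third and most delicate step is the $3\times 3$ block $U$, involving the Hodge dual of $V_{\mathrm{rot}}$. A naive expansion of $\nabla Y_i Y_j - Y_i \nabla Y_j$ on the first three components produces the antisymmetric combination corresponding to $V_{\mathrm{rot}}$, plus an extra contribution coming from the Gauss map piece $\vec n$ and its derivative $\nabla \vec n$. Using that $\vec n \wedge \nabla \vec n$ produces, via the identity $\nabla^\perp \vec n = \vec n \times \nabla \vec n$ in $\R^3$, precisely a correction of the form $2 \nabla^\perp \vec n$, this extra term is exactly the discrepancy $\tilde V_{\mathrm{rot}} - V_{\mathrm{rot}}$. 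Inserting this into the skew $3\times 3$ matrix yields the asserted form of $U$.

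The main obstacle will be precisely this bookkeeping in the $3\times 3$ block: one must carefully identify the ``geometric" part arising from $\nabla \Phi \wedge H \Phi$ and $\nabla \vec n \wedge \Phi$, recognize it as the rotation current up to the $\nabla^\perp \vec n$ correction, and verify that no spurious term survives. The other blocks are computationally heavier but mostly amount to substitution and use of $|\Phi|^2$-expansions; the Willmore hypothesis itself is not actually needed for the algebraic identity (it only intervenes when one later wants to conclude that the corresponding entries of $\mu$ are conserved, i.e. closed), so the whole proof is really a universal linear-algebraic identity on the conformal Gauss map combined with the explicit definitions of the $V$'s.
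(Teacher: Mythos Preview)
Your plan is correct and is essentially the paper's approach: a direct computation of the blocks of $\mu$ using the explicit formula for $Y$ and $\nabla Y$, followed by matching against the definitions of the $V_*$. The paper organizes the same calculation slightly more cleanly by writing $\mu\epsilon v = \langle Y,v\rangle\nabla Y - \langle\nabla Y,v\rangle Y$ (with $\epsilon=\mathrm{diag}(1,1,1,1,-1)$) and testing against $v=(w,0,0)$, $(0,1,1)$, $(0,1,-1)$, which immediately isolates the $3\times3$ block, $a-b$, $\omega$, and $a+b$ without writing out all entries; you may find this less error-prone than a fully componentwise expansion. One small caution: the identity you invoke, ``$\nabla^\perp\vec n=\vec n\times\nabla\vec n$'', is not literally correct (the left side rotates domain indices, the right side rotates in the target); the actual correction term comes from the relation $\nabla^\perp\vec n=-H\nabla^\perp\Phi-\vec n\times\Ar\nabla\Phi$ (see (\ref{formuleennablaperpn})), and it is the piece $\vec n\times\Ar\nabla\Phi$ that produces the discrepancy $\tilde V_{\mathrm{rot}}-V_{\mathrm{rot}}$. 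Your observation that the Willmore hypothesis is not used in the algebraic identity itself is correct.
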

This result can be applied to the interplay of the conserved quantities, and provides an alternate proof of a result by A. Michelat and T. Rivière in \cite{michelatclassifi}.
\begin{cor}
Let $\Phi \, : \, \D \rightarrow \R^3$ be a Willmore immersion, conformal, of conformal Gauss map $Y$.
Let $\iota \, : \, x \mapsto \frac{x}{|x|^2}$ be the inversion at the origin. Let $V_{*, \iota}$ be the conserved quantity corresponding to the transformation $*$ for $\iota \circ \Phi$.
Then 
$$ \begin{aligned}
V_{\mathrm{tra}, \, \iota } &= V_{\mathrm{inv}} \\
V_{\mathrm{inv}, \, \iota } &= V_{\mathrm{tra}} \\
V_{\mathrm{dil}, \, \iota} &= - V_{\mathrm{dil}} \\
\tilde V_{\mathrm{rot}, \, \iota} &= \tilde V_{\mathrm{rot}}.
\end{aligned}$$
\end{cor}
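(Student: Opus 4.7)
The plan is to exploit the preceding theorem, which packages the four conserved quantities into the single matrix $\mu = \nabla Y Y^T - Y \nabla Y^T$ built from the conformal Gauss map. Because the conformal Gauss map is equivariant under the $SO(4,1)$-action, composing $\Phi$ with $\iota$ should replace $Y$ by $M_\iota Y$ for the matrix $M_\iota$ representing $\iota$, so that $\mu$ undergoes the conjugation $\mu \mapsto M_\iota \mu M_\iota^T$; the new conserved quantities can then be read off by matching this transformed matrix against the canonical block form of the theorem.

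First I would identify the matrix $M_\iota$ explicitly. Using the action formula of the proposition stated above on $\R^3$, one requires $y_\diamond/(y_5-y_4) = x/|x|^2$; checking the null-cone identity pins down $M_\iota = \mathrm{diag}(1,1,1,-1,1)$, which is symmetric and preserves the $(4,1)$ bilinear form $\mathrm{diag}(1,1,1,1,-1)$. The equivariance $\tilde Y = M_\iota Y$ of the conformal Gauss map of $\iota \circ \Phi$ then gives, since $M_\iota$ is a constant matrix,
$$\tilde \mu = \nabla \tilde Y \tilde Y^T - \tilde Y \nabla \tilde Y^T = M_\iota \mu M_\iota^T.$$

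The rest is algebraic. Conjugation by $\mathrm{diag}(1,1,1,-1,1)$ leaves the upper-left $3\times 3$ block $U$ untouched, keeps the $5$th row and column intact, and simultaneously flips signs on the $4$th row and column. Comparing entry by entry with the block decomposition of $\tilde \mu$ produced by the theorem yields $\tilde U_\iota = U$, hence $\tilde V_{\mathrm{rot},\iota} = \tilde V_{\mathrm{rot}}$; a sign change on the $(4,5)$-entry, hence $V_{\mathrm{dil},\iota} = -V_{\mathrm{dil}}$; and on the $3$-vector blocks the two linear relations $V_{\mathrm{tra},\iota}+V_{\mathrm{inv},\iota} = V_{\mathrm{tra}}+V_{\mathrm{inv}}$ and $V_{\mathrm{tra},\iota}-V_{\mathrm{inv},\iota} = V_{\mathrm{inv}}-V_{\mathrm{tra}}$, whose combination gives precisely $V_{\mathrm{tra},\iota}=V_{\mathrm{inv}}$ and $V_{\mathrm{inv},\iota} = V_{\mathrm{tra}}$.

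The main potential obstacle is the equivariance statement $\tilde Y = M_\iota Y$: it has to hold with the correct sign and without any extra conformal scalar factor. Depending on the normalization of the conformal Gauss map retained in the earlier sections, this may require a short verification at the level of osculating spheres, using the geometric definition given in example \ref{ouonintroduitlaGaussconforme}. Once that is in place the corollary reduces to matrix bookkeeping and recovers the Michelat--Rivière result via a purely structural argument, avoiding the direct manipulation of the divergence equations.
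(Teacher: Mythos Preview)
Your approach is essentially the same as the paper's: apply the conjugation law $\mu_\varphi = M\mu M^T$ (theorem \ref{exchangelawgeneral}) with the matrix representing $\iota$, then read off the conserved quantities from the block decomposition of theorem \ref{repartitionquantiteconservees}. The only discrepancy is your choice $M_\iota = \mathrm{diag}(1,1,1,-1,1)$, which has determinant $-1$ and so lies in $O(4,1)\setminus SO(4,1)$; the paper uses $M_\iota = \mathrm{diag}(-1,-1,-1,1,-1)$ (see (\ref{reprinversion})), which is its negative and does lie in $SO(4,1)$. With your matrix the equivariance actually reads $M_\iota Y = -Y_\varphi$ rather than $Y_\varphi$, but since $\mu$ is quadratic in $Y$ the conjugation $M\mu M^T$ is insensitive to this global sign, and your block analysis goes through unchanged.
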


Finally we will study conformally CMC surfaces using a moving frame for the conformal Gauss map. This will yield a notable improvement of theorem \ref{lesresultatsquonavaitdeja}, that we frame in the more elegant framework of immersions in $\s^3$.
\begin{theo}
\label{conformementCMCnya1}
Let $X$ be a smooth conformal immersion from $\D$ to $\s^3$, and $\Phi$ (respectively $Z$) its representation in $\R^3$ (respectively $\mathbb{H}^3$) through $\pi$ (respectively $\tilde \pi$). 
 We assume that $X$ (or equivalently, see (\ref{projectionstereo5}) and (\ref{projectionhyper5}), $\Phi$ or $Z$)  has no umbilic point.
One of the representation of $X$ is conformally $CMC$ in its ambiant space if and only if $\mathcal{Q}$ is holomorphic and $X$ is isothermic.
More precisely  $\left( \mathcal{W}_{\s^3} ( X)\right)^2 - \overline{\omega}^2e^{-4\Lambda} \mathcal{Q}$ is then necessarily real and 
\begin{itemize}
\item
$\Phi$ is conformally CMC (respectively minimal) in $\R^3$ if and only if $$\left( \mathcal{W}_{\s^3} ( X)\right)^2 - \overline{\omega}^2e^{-4\Lambda} \mathcal{Q}=0.$$ 
\item 
$X$  is conformally CMC (respectively minimal) in $\s^3$ if and only if $$\left( \mathcal{W}_{\s^3} ( X)\right)^2 - \overline{\omega}^2e^{-4\Lambda} \mathcal{Q} < 0.$$
\item
$Z$ is conformally CMC (respectively minimal) in $\Hy^3$ if and only if $$\left( \mathcal{W}_{\s^3} ( X)\right)^2 - \overline{\omega}^2e^{-4\Lambda} \mathcal{Q} > 0.$$
\end{itemize}
In particular, conformally minimal immersions satisfy $\mathcal{W}_{\s^3} (X)=0$. 
\end{theo}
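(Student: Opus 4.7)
The plan is to reduce the question to Theorem \ref{lepetitlabelquisertarien}, which already identifies conformally CMC (resp.\ minimal) immersions in $\R^3$, $\s^3$, $\Hy^3$ with the condition that the conformal Gauss map $Y$ lies in an affine (resp.\ linear) hyperplane of $\R^{4,1}$ whose normal $L$ is lightlike, timelike, or spacelike, respectively. Two tasks remain: (i) translate the existence of a constant vector $L$ with $\langle Y,L\rangle\equiv c$ into differential conditions on $X$, and (ii) compute $\langle L,L\rangle$ and identify it, up to a positive factor, with $\bigl(\mathcal{W}_{\s^3}(X)\bigr)^2-\overline{\omega}^{\,2}e^{-4\Lambda}\mathcal{Q}$.

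For (i), I would work in a conformal coordinate $z$ on $\D$ and use the moving frame for $Y$ developed in sections \ref{section1}--\ref{section2}. Expanding $L$ in the (complexified) frame $(Y, Y_z, Y_{\bar z}, X^{*}, \tilde X)$, where $X^{*}$ is a canonical lift of $X$ to the light cone of $\R^{4,1}$ and $\tilde X$ a transverse direction, the condition $\langle Y,L\rangle = c$ and its derivatives yield $\langle Y_z,L\rangle = \langle Y_{\bar z},L\rangle = 0$, and then $\langle Y_{zz},L\rangle = \langle Y_{z\bar z},L\rangle = 0$ after using the structure equations. These equations feature $\mathcal{Q}$ via $\langle Y_{zz},Y_{zz}\rangle$ and $\mathcal{W}_{\s^3}(X)$ via $Y_{z\bar z}$, and they pin down $L$ uniquely (given $c$) as an explicit combination of $Y$, $X^{*}$, $\tilde X$ whose coefficients are rational expressions in $c$, $\mathcal{W}_{\s^3}(X)$, $\mathcal{Q}$, $\overline{\omega}$, and $e^{\Lambda}$. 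The constraint $dL = 0$ then becomes a first order PDE system on these coefficients: its $dz\wedge d\bar z$ compatibility will give $\overline{\partial}\mathcal{Q} = 0$, i.e.\ holomorphicity of the Bryant quartic, while reality of the solution $L$ selects the direction in which $X$ admits conformal curvature-line coordinates, i.e.\ the isothermic condition of Definition \ref{defisothermicimmersions}.

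For (ii) it then suffices to plug the explicit $L$ into the pairing and simplify: the result comes out as a positive multiple of $\bigl(\mathcal{W}_{\s^3}(X)\bigr)^2-\overline{\omega}^{\,2}e^{-4\Lambda}\mathcal{Q}$, which is automatically real since $L$ is real. The three sign cases then identify the target model via Theorem \ref{lepetitlabelquisertarien}. The minimal subcases correspond to linear hyperplanes, i.e.\ $c = 0$; reading back the explicit formula for $L$ in that case forces $\mathcal{W}_{\s^3}(X) = 0$.

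The converse is the integration of the PDE system from (i): assuming $\mathcal{Q}$ holomorphic and $X$ isothermic, one verifies that the system is compatible, constructs the constant vector $L$ explicitly, reads off its causal type from the sign of the algebraic expression, and invokes Theorem \ref{lepetitlabelquisertarien} to conclude. The main obstacle I anticipate is step (ii), namely the explicit identification of $\langle L,L\rangle$ with the displayed combination: this requires carefully tracking frame normalizations (in particular how $\overline{\omega}$ and the factor $e^{-4\Lambda}$ enter), and it is at this stage that one sees the isothermic condition — a priori an abstract existence statement for a compatible holomorphic quadratic differential — materialize as precisely the integrability obstruction for the $L$-equations. Once this identification is in hand, the trichotomy and the minimal subcases follow mechanically.
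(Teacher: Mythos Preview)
Your plan is essentially the paper's: expand the putative constant vector in the null-normal moving frame $(Y,Y_z,Y_{\bar z},\nu,\nu^*)$ with $\nu=p(X)$, differentiate, extract the isothermic and $\mathcal Q$-holomorphic conditions from the resulting system, and read off the causal type from $\langle L,L\rangle$. One point where your sketch is thinner than the actual argument: in the converse the paper does not ``verify compatibility and integrate'' but first proves, using the fact that a real holomorphic function on $\D$ is constant, that isothermic together with $\mathcal Q$ holomorphic forces $\mathcal Q = m\varphi_0^2$ and $\mathcal W_{\s^3}(X)=n\,\omega\overline{\varphi_0}e^{-2\Lambda}$ for real constants $m,n$ (your anticipated obstacle is step~(ii), but this reduction-to-constants in the converse is the genuinely nontrivial step), after which the constant vector is written down by hand and $v_z=0$ checked algebraically.
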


In the previous theorem $ \mathcal{W}_{\s^3}(X) =0$ is the Willmore equation of immersions in $\s^3$ (see (\ref{wtorduX}) for a definition and to see it arise organically), $\omega$ is the tracefree curvature of $X$ and $\Lambda$ its conformal factor. While the three conditions were all expressed in terms of immersions in $\s^3$, similar ones could be drawn in terms of immersions in $\R^3$ or $\Hy^3$. We have excluded the case of euclidean spheres (since our immersions are assumed to have no umbilic points) which are both CMC in $\R^3$ and in $\s^3$. This is due to the degenerescence of the conformal Gauss map at umbilic points. 

The added value here is that we can point out in which space the immersion is CMC and that we show $\overline{\omega}^2 \mathcal{Q}$ to be real whenever the immersion is isothermic. More interestingly $\overline{\omega^2} \mathcal{Q} \in \R$ can itself be replaced by the weaker $\overline{\omega^2} \mathcal{Q}$  holomorphic. Indeed  if $\overline{\omega^2} \mathcal{Q} = \varphi$ is holomorphic, $\overline{\omega^2} \frac{\mathcal{Q}}{\varphi} = 1 \in \R$ with $\frac{\mathcal{Q}}{\varphi}$ holomorphic, meaning that  by definition $X$ is isothermic, and thus, according to the theorem, conformally CMC. Moreover since $X$ is conformally CMC, once more according to the theorem, necessarily $\overline{\omega^2} \mathcal{Q}  \in \R$.

This can be somewhat put in perspective  with a discussion in section 5 of \cite{loopiloop}. In this article F. Hélein derives a (non explicit) Weierstrass formula for Willmore surfaces (see also J. Dorfmeister and P. Wang's work in \cite{dorfwangtarxiv1} for another viewpoint and a comparison with Hélein's results), and discusses a particular subcase, giving a characterization of conformally minimal surfaces in terms of his Weierstrass data.  In this characterization one is a real constant  whose sign determines in which space the immersion is conformally minimal, and bears striking resemblance to $ \mathcal{Q} $ in isothermic coordinates. While we cannot, due to the non-explicit nature of the Weierstrass formula, state that $\nu$ is in fact linked to $\mathcal{Q}$, the aforementioned similarities do suggest so. Such an identification would not only shed light on the Weierstrass data, but answer a question raised by F. Hélein as to what $\nu$ non real but constant means.

The novelty in these results lies in their expliciteness, in the characterizations derived on $\mathcal{Q}$ and in the determining of the nature of the space (compared with theorem \label{lesresultatsquonavaitdeja}). 


The point of view of the conformal Gauss map is also fruitful when considering the index of Willmore surfaces  (consider for instance \cite{bibpalmerconfgaussmap}).
This will be the subject of an incoming paper \cite{MM}.

Considering the results obtained in higher codimensions by N. Ejiri (see \cite{MR950596}), S. Montiel (see \cite{MR1695032}) or J. Dorfmeister and P. Wang (see \cite{DPW}) using in no small part integrable systems techniques,  an interesting question arises as to how well our results generalize outside the codimension 1 case.

{\bf Acknowledgments: } The author would like to thank Paul Laurain for his support and advices, and Alexis Michelat for helpful and enlightening conversations.
This work was partially supported by the ANR BLADE-JC.

\section{Conformal Geometry in the three model spaces }
\label{section1}
\subsection{Local conformal equivalences }
\label{localconformalequivalences}
In the following $\langle .,. \rangle$ will denote the standard product on the relevant contextual space. For instance if $u, v \in \R^m$ with $m \in \mathbb{N}$, $\langle u,v \rangle$ denotes the euclidean product of $u$ and $v$ in $\R^m$. If $u$ 
and $v$ 
 are stated to be in $\R^{m,1}$ then $\displaystyle{ \langle u , v \rangle =\sum_{i=1}^m u_i v_i-u_{m+1}v_{m+1}}$ denotes the $(m,1)$ Lorentzian product of $u$ and $v$ in $\R^{m+1}$.


We will focus on immersions into the Euclidean space $ \R^3$, into the round sphere $\s^3$ and into the hyperbolic space $\mathbb{H}^3$.

These three spaces are locally conformally equivalent and thus their respective conformal geometry can be linked. Namely the stereographic projection from the north pole $N$

$$\pi : \left\{ \begin{aligned}
  \s^3   \backslash \{ N \} & \rightarrow  \R^3  \\
(x,y,z,t) & \mapsto \frac{1}{1-t} \begin{pmatrix} x \\ y \\ z \end{pmatrix} \end{aligned} \right.$$

is a conformal diffeomorphism whose  inverse is

$$ \pi^{-1} : \left\{ \begin{aligned}
 \R^3  &\rightarrow\s^3   \backslash \{ N \}\\
(x,y,z) & \mapsto \frac{1}{1+ r^2} \begin{pmatrix} 2x \\2 y \\2 z \\ r^2-1 \end{pmatrix}  \end{aligned} \right.$$

which extends to a conformal diffeomorphism $\R^3 \cup \{ \infty \} \rightarrow \s^3$. Consequently one can link $\mathrm{Conf} \left(\R^3\cup \{ \infty \} \right) $ and $\mathrm{Conf} \left( \s^3 \right)$.

\begin{prop}
\label{confr3s3}
$\pi$ realises an isomorphism between $\mathrm{Conf} \left( \R^3 \cup  \{ \infty \}\right)$ and $\mathrm{Conf} \left( \s^3 \right)$, with $\mathrm{Conf}(X)$ being the group of conformal diffeomorphisms of $X$.
\end{prop}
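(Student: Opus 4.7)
The plan is to exhibit the isomorphism explicitly as conjugation by $\pi$. Define
$$\Psi : \mathrm{Conf}(\R^3 \cup \{\infty\}) \longrightarrow \mathrm{Conf}(\s^3), \qquad \varphi \longmapsto \pi^{-1} \circ \varphi \circ \pi.$$
The bulk of the proof is then to verify that $\Psi$ is well-defined, a group morphism, and bijective.

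For well-definedness I would invoke the preceding paragraph, where it is asserted that $\pi : \s^3 \setminus \{N\} \to \R^3$ is a conformal diffeomorphism whose extension $\pi : \s^3 \to \R^3 \cup \{\infty\}$ is also conformal. Since the composition of conformal maps is conformal and since $\pi^{-1} \circ \varphi \circ \pi$ is a composition of three diffeomorphisms, it is a conformal diffeomorphism of $\s^3$, which puts it in $\mathrm{Conf}(\s^3)$. The care needed here is merely to ensure, by unfolding the definition of a conformal diffeomorphism of $\s^3$ (resp. of $\R^3 \cup \{\infty\}$), that the conformality survives through the points $N$ and $\infty$; this boils down to computing the conformal factor of $\pi$ near $N$ from the explicit formula already displayed and observing it is nonzero and smooth after passing to a stereographic chart at $N$.

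The group morphism property is a one-line computation: for $\varphi_1,\varphi_2 \in \mathrm{Conf}(\R^3 \cup \{\infty\})$,
$$\Psi(\varphi_1 \circ \varphi_2) = \pi^{-1}\circ \varphi_1 \circ \varphi_2 \circ \pi = (\pi^{-1}\circ \varphi_1 \circ \pi) \circ (\pi^{-1}\circ \varphi_2 \circ \pi) = \Psi(\varphi_1)\circ \Psi(\varphi_2).$$
For bijectivity, the symmetric map $\Psi' : \psi \mapsto \pi \circ \psi \circ \pi^{-1}$ is, by the same argument applied to $\pi^{-1}$, a well-defined group morphism $\mathrm{Conf}(\s^3) \to \mathrm{Conf}(\R^3 \cup \{\infty\})$, and $\Psi' \circ \Psi = \mathrm{id}$, $\Psi \circ \Psi' = \mathrm{id}$ follow from $\pi \circ \pi^{-1} = \mathrm{id}$ and $\pi^{-1} \circ \pi = \mathrm{id}$.

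The only genuine technical point is therefore the conformality of $\pi$ and $\pi^{-1}$ as diffeomorphisms of the full one-point compactification, i.e.\ the behaviour at the two distinguished points. Everything else is formal. I do not foresee any real obstacle: the argument is essentially the general fact that any conformal diffeomorphism $f : M \to N$ induces an isomorphism $\mathrm{Conf}(N) \to \mathrm{Conf}(M)$ by conjugation, specialized to $f=\pi$.
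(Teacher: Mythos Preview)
Your argument is correct and is exactly the natural one: conjugation by a conformal diffeomorphism induces a group isomorphism between the conformal groups. The paper in fact does not supply a proof of this proposition at all; it is stated as an immediate consequence of $\pi$ being a conformal diffeomorphism $\s^3 \to \R^3 \cup \{\infty\}$, and your write-up simply makes that implication explicit.
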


The conformal diffeomorphisms of $\R^3$ are well-known and  detailed by the Liouville  theorem (see theorem 1.1.1 of \cite{bibrefconformaldifferentialgeometry}).

\begin{theo}

\label{liouville}
 Any 
$\varphi \in \mathrm{Conf}( \R^3 \cup \infty)$ satisfies
$$ \varphi = T_{\vec{a}} \circ R_\Theta \circ D_\lambda $$
if $\varphi \left( \infty \right) = \infty$, 
$$ \varphi = T_{\vec{b}} \circ R_\Theta  \circ D_\lambda \circ \iota \circ T_{\vec{a}}$$
otherwise. Here $T_{\vec{a}}$ and $T_{\vec{b}}$ denote translations, $D_\lambda$  a dilation, $ R_\Theta$ a rotation and $\iota \,: x \mapsto \frac{x}{|x|^2}$ the inversion at the origin.  Such decompositions are unique.
\end{theo}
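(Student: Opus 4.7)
The plan is to first prove the rigidity statement for the case $\varphi(\infty) = \infty$ (namely, that such a $\varphi$ is a similarity) and then reduce the general case to this one by conjugation with the inversion $\iota$. Uniqueness is then bookkeeping.

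For the rigidity step, I would start from the pointwise conformality equation $d\varphi(x)^T d\varphi(x) = \lambda(x)^2 I_3$ with $\lambda > 0$. Differentiating it and taking cyclic combinations — the same device that extracts Christoffel symbols from the first derivatives of a metric — expresses each second derivative $\partial_i \partial_j \varphi^k$ as a combination of $\partial_l \varphi^k$ and $\partial_l \log \lambda$. Substituting back into the integrability condition $\partial_i \partial_j \partial_k \varphi^l = \partial_j \partial_i \partial_k \varphi^l$ yields a second-order PDE for $\log \lambda$ whose only solutions defined on all of $\R^3$ and bounded near infinity are the constants; this is the only step where the assumption of dimension at least three is essential. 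Hence $\lambda$ is a positive constant, $d\varphi$ is constant, and $\varphi(x) = \lambda R_\Theta \, x + \vec{a}$ with $R_\Theta \in SO(3)$. This reads $\varphi = T_{\vec{a}} \circ R_\Theta \circ D_\lambda$.

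For the general case, assume $\varphi(\infty) \in \R^3$ and set $\vec{a} = -\varphi^{-1}(\infty)$. Then $\psi := \varphi \circ T_{-\vec{a}} \circ \iota$ is a smooth conformal diffeomorphism of $\R^3 \cup \{\infty\}$: indeed $\iota(\infty) = 0$, $T_{-\vec{a}}(0) = -\vec{a} = \varphi^{-1}(\infty)$, so $\psi(\infty) = \infty$. The first case provides $\psi = T_{\vec{b}} \circ R_\Theta \circ D_\lambda$, and since $\iota$ is an involution we get $\varphi = \psi \circ \iota \circ T_{\vec{a}} = T_{\vec{b}} \circ R_\Theta \circ D_\lambda \circ \iota \circ T_{\vec{a}}$, as required.

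For uniqueness, in the first form, evaluating at $0$ fixes $\vec{a}$ and the common value $d\varphi(0) = \lambda R_\Theta$ fixes $\lambda > 0$ and $R_\Theta \in SO(3)$ by polar decomposition in $GL_3(\R)$. In the second form, $-\vec{a}$ is forced to be the unique preimage of $\infty$, after which the factor $\iota \circ T_{\vec{a}}$ can be cancelled on the right and uniqueness reduces to the first case. The main and only real difficulty in the whole argument is the PDE rigidification showing $\lambda$ is constant; everything else is composition of elementary conformal maps and a polar decomposition.
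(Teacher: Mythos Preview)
The paper does not actually prove this theorem: it is stated with a citation to an external reference (\cite{bibrefconformaldifferentialgeometry}) and then used as a black box. So there is no ``paper's own proof'' to compare against.

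Your sketch is the standard route to Liouville's theorem and is essentially correct. Two small remarks. First, the condition you invoke to pin down $\lambda$ as constant is slightly off: the PDE for $\log\lambda$ (or rather for $1/\lambda$) forces $1/\lambda$ to be of the form $a+\langle b,x\rangle+c|x|^2$ with $|b|^2=4ac$; the relevant obstruction is not boundedness near infinity but the fact that $\lambda$ is finite and nonzero on all of $\R^3$ (since $\varphi$ is a diffeomorphism of $\R^3$), which kills the non-constant solutions because they all have a zero of $1/\lambda$. Second, you write $R_\Theta\in SO(3)$, but $\mathrm{Conf}(\R^3\cup\{\infty\})$ as used in the paper includes orientation-reversing maps (the paper itself takes $\Theta\in O(3)$ a few lines later), so the polar decomposition should land in $O(3)$. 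Neither point affects the structure of your argument.
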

  
Combining both ensures a description of conformal diffeomorphisms of $\s^3$. 

\begin{cor}
\label{corliouville}
Any conformal mapping $\varphi \in \mathrm{Conf}( \s^3)$ satisfies either

$$ \varphi = \pi^{-1} \circ T_{\vec{b}} \circ R_\Theta \circ D_\lambda  \circ T_{\vec{a}} \circ \pi$$
if $\varphi \left( N\right) = N$,
$$ \varphi = \pi^{-1} \circ T_{\vec{b}} \circ R_\Theta  \circ D_\lambda \circ \iota \circ T_{\vec{a}} \circ \pi$$

otherwise.
\end{cor}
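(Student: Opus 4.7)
The plan is to reduce the statement to Liouville's theorem (Theorem \ref{liouville}) by conjugating with the stereographic projection $\pi$, using Proposition \ref{confr3s3}. This is essentially a bookkeeping argument and I do not anticipate a genuine obstacle.

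\medskip

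First I would fix $\varphi \in \mathrm{Conf}(\s^3)$ and set $\tilde\varphi := \pi \circ \varphi \circ \pi^{-1}$. By Proposition \ref{confr3s3}, the conjugation by $\pi$ realises a group isomorphism $\mathrm{Conf}(\s^3) \to \mathrm{Conf}(\R^3 \cup \{\infty\})$, so $\tilde\varphi \in \mathrm{Conf}(\R^3 \cup \{\infty\})$. Since $\pi$ (extended to $\s^3 \to \R^3 \cup \{\infty\}$) sends $N$ to $\infty$, the two conditions
$$\varphi(N) = N \quad \text{and} \quad \tilde\varphi(\infty) = \infty$$
are equivalent.

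\medskip

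Next I would apply Theorem \ref{liouville} to $\tilde\varphi$. If $\tilde\varphi(\infty) = \infty$, Liouville gives $\tilde\varphi = T_{\vec b} \circ R_\Theta \circ D_\lambda$, which we can equivalently write as $T_{\vec b} \circ R_\Theta \circ D_\lambda \circ T_{\vec 0}$ (with the auxiliary translation $T_{\vec a} = T_{\vec 0}$ absorbed harmlessly) to match the statement; otherwise Liouville produces the second form $\tilde\varphi = T_{\vec b} \circ R_\Theta \circ D_\lambda \circ \iota \circ T_{\vec a}$. In both cases one recovers $\varphi$ by composing on the left with $\pi^{-1}$ and on the right with $\pi$:
$$\varphi = \pi^{-1} \circ \tilde\varphi \circ \pi,$$
which yields exactly the two decompositions claimed.

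\medskip

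The only subtlety worth checking — and the place where one might hesitate — is that the conjugation by $\pi$ is well-defined on the whole of $\s^3$ (respectively $\R^3 \cup \{\infty\}$): this is handled by the extension of $\pi$ and $\pi^{-1}$ to a conformal diffeomorphism between the compactified spaces, which is precisely the content of Proposition \ref{confr3s3}. Everything else is routine substitution, and uniqueness of the decomposition transports from Liouville's statement to the corollary by the same conjugation.
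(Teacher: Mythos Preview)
Your proof is correct and is exactly the argument the paper intends: the corollary is stated immediately after Theorem~\ref{liouville} with the single sentence ``Combining both ensures a description of conformal diffeomorphisms of $\s^3$'', and your write-up simply makes that combination explicit via the conjugation $\tilde\varphi = \pi \circ \varphi \circ \pi^{-1}$. Your remark about absorbing the extra $T_{\vec a}$ as $T_{\vec 0}$ in the $\varphi(N)=N$ case is the right way to reconcile the slight cosmetic mismatch between the form in Theorem~\ref{liouville} and the form in the corollary.
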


\vspace{5mm}

Using the Poincaré disk model of the hyperbolic space   one finds an isometry \newline $\tilde \pi_0 \, : \,  \mathbb{H}^3 \rightarrow \left( B_1(0), \frac{\langle ., . \rangle}{\left(1- |p|^2\right)^2} \right)$ and thus a conformal diffeomorphism between $\mathbb{H}^3$ and the unit ball of $\R^3$. It will convenient in the following to consider $\mathbb{H}^3$ as the upper part  of the quadric $\left\{ v \in \R^{3,1} \,  \left| \langle v, v \rangle = -1 \right. \right\}$ in $\R^{3,1}$ : $$ \mathbb{H}^3 = \left\{(x,y,z,t) \left| \, x^2 +y^2 +z^2 -t^2 +1 = 0 \text{ and } t \ge 0 \right. \right\} \subset \R^{3,1}.$$ Then the following projection 
 yields an explicit conformal diffeomorphism
 
$$ \tilde \pi : \left\{ \begin{aligned}
&  \mathbb{H}^3  \rightarrow  B_1(0) \\
& (x,y,z,t)  \mapsto \frac{1}{1+t} \begin{pmatrix} x \\ y \\ z \end{pmatrix} \end{aligned} \right.$$

of inverse

$$ \tilde \pi^{-1} : \left\{ \begin{aligned}
&  B_1(0) \rightarrow \mathbb{H}^3 \\
& (x,y,z)  \mapsto \frac{1}{1-r^2} \begin{pmatrix} 2x \\2 y \\2 z \\ r^2+1 \end{pmatrix}.  \end{aligned} \right.$$

\subsection{The space of spheres of $\s^3$ }
\label{subsectionspaceofpsheres}
In the present subsection we wish to properly represent the geometry of geodesic spheres of $\s^3$.  Our motivation comes from the following result, drawn from chapter 1 in \cite{bibrefconformaldifferentialgeometry}.

\begin{theo}
\label{characterizationconformalmappingsspheres}
Let $(M, g)$ and $(N, h)$ be two Riemann manifolds and $\varphi \, : M \rightarrow N$. 
$\varphi$ is conformal if and only if it sends a geodesic sphere of $M$ into a geodesic sphere of $N$.
\end{theo}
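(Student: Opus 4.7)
The plan is to establish the two implications separately, each reduced to a local/infinitesimal statement.

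For the forward direction (conformal implies geodesic spheres go to geodesic spheres), I would first localize via coordinate charts so as to be able to invoke Theorem \ref{liouville}, which decomposes any conformal diffeomorphism of $\R^3 \cup \{\infty\}$ as a composition of translations, rotations, dilations and the inversion $\iota$. It then suffices to check the sphere-preservation property on each of these four building blocks. Translations and rotations are isometries, dilations are homotheties, so all three clearly send geodesic spheres to geodesic spheres. The only nontrivial case is the inversion $\iota(x)=x/|x|^2$: substituting $x = y/|y|^2$ into $|x-a|^2 = r^2$ and clearing denominators yields either a sphere equation or an affine equation in $y$, which proves that $\iota$ sends the collection of spheres/hyperplanes to itself.

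For the converse (sphere-preservation implies conformal), I would work pointwise via the exponential map. Around any $p \in M$, normal coordinates identify small geodesic spheres $S_r(p)$ with Euclidean spheres in $T_pM$ to leading order in $r$. Assuming $\varphi(S_r(p))$ is contained in a geodesic sphere of $N$, one passes to the $r \to 0$ limit after rescaling by $r^{-1}$, and obtains that the linear map $d\varphi_p \colon T_pM \to T_{\varphi(p)}N$ must send Euclidean spheres centered at the origin to Euclidean spheres (which, by linearity, must themselves be centered at the origin). A linear map with this property is forced to be a scalar multiple of an orthogonal transformation, i.e.\ $d\varphi_p$ is a conformal linear isomorphism. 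Since this holds at each $p$, $\varphi$ is conformal.

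The main obstacle is the rigor of the infinitesimal argument in the converse: one must track the higher-order corrections that distinguish a geodesic sphere from a Euclidean sphere in normal coordinates, and argue that these corrections vanish in the rescaled limit so that the spherical symmetry of the image is not disturbed by curvature terms. Once this limiting procedure is controlled, the algebraic step classifying linear maps that send round spheres to round spheres is elementary (one simply diagonalizes $(d\varphi_p)^T d\varphi_p$ and observes that all eigenvalues must coincide).
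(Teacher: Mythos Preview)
The paper does not actually prove this theorem: it is quoted as ``drawn from chapter~1 in \cite{bibrefconformaldifferentialgeometry}'' and immediately used without argument. So there is no in-paper proof to compare against; what matters is whether your sketch stands on its own.

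Your forward direction has a genuine gap. Liouville's theorem (Theorem~\ref{liouville}) classifies conformal diffeomorphisms of $\R^3\cup\{\infty\}$ \emph{with respect to the Euclidean metric}. Passing to a coordinate chart on a general Riemannian manifold $(M,g)$ does not put you in that situation: the map $\varphi$ read in charts is conformal for the pulled-back metrics $g$ and $h$, not for the flat metric, so Liouville does not apply and there is no decomposition into translations, rotations, dilations and inversions. Worse, the statement itself is delicate in full generality: take $M=N=\R^3$, $g$ Euclidean, $h=e^{2\lambda}g$ with $\lambda$ non-constant; the identity is conformal but Euclidean spheres are not geodesic spheres of $h$. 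So the theorem as literally written requires either a restriction to space forms (which is all the paper actually uses) or a reinterpretation of ``geodesic sphere'', and your Liouville argument only becomes viable once you are already in one of the three model spaces and have identified their geodesic spheres with generalized Euclidean spheres via $\pi$ or $\tilde\pi$.

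Your converse sketch is closer to a correct local argument, but note that the hypothesis must be read as ``sends \emph{every} small geodesic sphere to a geodesic sphere'' (a single sphere is far too weak), and you also need to control where the \emph{centers} of the image spheres go as $r\to 0$ before you can conclude that the limiting ellipsoid $d\varphi_p(S^{n-1})$ is a round sphere centered at the origin rather than an off-center one.
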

Thanks to theorem \ref{characterizationconformalmappingsspheres}, one would then expect to be able to detail conformal diffeomorphisms of $\s^3$. Moreover since $\mathbb{H}^3 \hookrightarrow \R^3 \hookrightarrow \s^3$ conformally we would subsequently be able to represent geodesic spheres in $\mathbb{H}^3$  and $\R^3$.

The stereographic projection ensures that $\R^{3} \cup\{ \infty \} \simeq \s^3$ conformally, and thus geodesic spheres in $\s^3$ are images by $\pi^{-1}$ of euclidean spheres and planes ("spheres" going through $\infty$) of $\R^3$. They will be called spheres in $\s^3$. More precisely :

\begin{de}
A sphere in $ \s^3$ is equivalently defined as follows : 
\begin{itemize}
\item 
The inverse of the stereographic projection of  a sphere or a plane in $\R^3$.
\item 
$\{ x \in \s^3 \quad d (q, x ) = r \}$ for a given $q$ in  $\s^3$. $q$ is then the center of the sphere, of radius $r \le \frac{\pi}{2}$.
\end{itemize}
An equator of $\s^3$ is a sphere of maximum radius  $r = \frac{\pi}{2}$.
\end{de}


One can easily check that spheres in $\s^3$ are orientable.

\begin{de}
Let $$ \begin{aligned} \mathbb{M}_0 &=  \{ \mathrm{non}\text{-}\mathrm{oriented}\text{ }\mathrm{spheres}\text{ }\mathrm{in}\text{ }  \s^3 \}, \\ \mathbb{E}_0 &= \{ \mathrm{non}\text{-}\mathrm{oriented}\text{ }\mathrm{equatorial}\text{ }\mathrm{spheres}\text{ }\mathrm{in}\text{ } \s^3 \}, \end{aligned} $$ and $$ \begin{aligned} \mathbb{M} &=  \{ \mathrm{oriented}\text{ }\mathrm{spheres}\text{ }\mathrm{in}\text{ } \s^3 \}, \\ \mathbb{E} &= \{ \mathrm{oriented}\text{ }\mathrm{spheres}\text{ }\mathrm{in}\text{ } \s^3 \}. \end{aligned} $$
\end{de}

Let $ \sigma $ be a non-oriented sphere of radius $r < \frac{\pi}{2}$.
Let $X_{\sigma} \in \sigma$ be any point on the sphere and $\vec{N}_\sigma$ the inward  pointing (relative to $\sigma$) normal to $\sigma$ at $X_{\sigma}$. Then  $p_\sigma = X_\sigma + \tan r \vec{N}_\sigma$ is the summit of the tangent cone to $\s^3$ along $\sigma$. Since a sphere in $\s^3 $ has constant mean curvature $h = \frac{1}{\tan r}$ (see (\ref{h(r)}) in the appendix \ref{meancurvsphere} ), $p_\sigma = X_\sigma + \frac{1}{h}\vec{N}_\sigma$. This gives us a representation of $\mathbb{M}_0 \backslash \mathbb{E}_0 $ : 

$$ P_0 : \left\{ \begin{aligned}
&\mathbb{M}_0 \backslash \mathbb{E}_0 \rightarrow  \R^4 \backslash B_1(0) \\
&\sigma \mapsto p_\sigma, \end{aligned} \right.$$
as shown in figure 1.
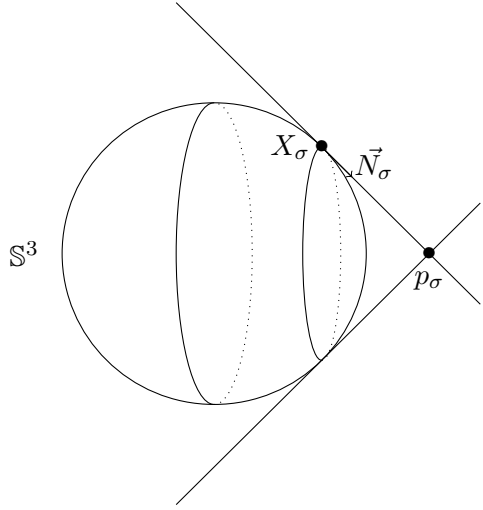
\begin{figure}[!h]
\centering
\begin{tikzpicture}
\draw(0.,0.) circle (2.cm);
\draw (1.4142135623731, 1.4142135623731) arc (90:270: 0.25 and 1.4142135623731);
\draw[dotted] (1.4142135623731, -1.4142135623731) arc (270:450: 0.25 and 1.4142135623731);
\draw (1.4142135623731, 1.4142135623731) node {$\bullet$};
\draw (1, 1.4142135623731) node {$X_\sigma$};
\draw[domain= -0.5:3.5] plot(\x,-\x +2.8284271247462);
\draw [->] (1.4142135623731, 1.4142135623731)-- (1.8142135623731,1.0142135623731);
\draw (2.1,1.2) node {$\vec{N_\sigma}$};
\draw[domain= -0.5:3.5] plot(\x,\x -2.8284271247462);
\draw (2.8284271247462,0) node {$\bullet$};
\draw (2.8284271247462,-0.1) node[below] {$p_\sigma$};
\draw (-2.5 , 0) node {$\mathbb{S}^3$};
\draw (0,2) arc (90:270: 0.5cm and 2cm);
\draw[dotted] (0,-2) arc (270:450: 0.5cm and 2cm);
\end{tikzpicture}
\centering
\caption{Construction of $p_\sigma$.}
\end{figure}

Conversely given any $p \in \R^4 \backslash B_1(0)$ there exists a unit cone of summit $p$ tangent to $\s^3$, along a sphere of $\s^3$. $P_0$ is then a bijection.

As $\sigma$ becomes equatorial, $ h \rightarrow 0$, meaning $p \rightarrow \infty$ and $\vec{N}_\sigma\rightarrow \vec{\nu}$ with $\vec{\nu} \in \R^4$ independant on the chosen $X_\sigma$. To properly represent all of $\mathbb{M}_0$ we define $\underline{p_\sigma} = \begin{pmatrix} p_\sigma \\1 \end{pmatrix}$. Then 

$$ \frac{ \underline{p_\sigma} }{ | {p_\sigma} | } = \begin{pmatrix} \frac{p_\sigma}{|p_\sigma|}\\ \frac{1}{|p_\sigma|} \end{pmatrix}\rightarrow \begin{pmatrix} \vec{\nu} \\ 0 \end{pmatrix}$$ as $\sigma$ tends toward an equatorial sphere of constant normal $\vec{\nu}$.

Then one can represent $\mathbb{M}_0$ in $ \mathbb{RP}^4$, with equatorial spheres being sent to $  \left[ \begin{pmatrix} \vec{\nu}  \\0 \end{pmatrix} \right]$ typed directions (where $[d]$ denotes the direction of $d \in \R^5$).

$$ P_1 : \left\{ \begin{aligned}
&\mathbb{M}_0 \rightarrow  \mathbb{RP}^4  \\
&\sigma \mapsto \left[ \underline{p_\sigma} \right]. \end{aligned} \right.$$

Since any equatorial sphere is fully determined by its normal, $P_1$ remains injective. However for non equatorial spheres  $p_\sigma$ is necessarily outside $B_1(0)$, and thus $P_1$ cannot be surjective. 

Pursuing will require some basic notions in semi-Riemannian geometry.

\begin{de}
Let $m \in \mathbb{N}$ and $v \in \R^{m,1}$. Then $v$ is said to be
\begin{itemize}
\item \emph{spacelike} if  $\langle v,v \rangle >0$,
\item \emph{lightlike} if $\langle v,v \rangle = 0$,
\item \emph{timelike} if $\langle v,v \rangle <0$.
\end{itemize}

Accordingly a direction $d \in \mathbb{RP}^{m+1}$ is called 
\begin{itemize}
\item \emph{spacelike} if  there exists $v \in \R^{m,1}$ such that $\langle v,v \rangle >0$ and $[v]=d$,
\item \emph{lightlike}  if  there exists $v \in \R^{m,1}$ such that $\langle v,v \rangle =0$ and $[v]=d$,
\item \emph{timelike}  if  there exists $v \in \R^{m,1}$ such that $\langle v,v \rangle <0$ and $[v]=d$.
\end{itemize}

We also define 
\begin{itemize}
\item the \emph{De Sitter} space of $\R^{m,1}$ as the set of unit spacelike vectors. \newline It will be denoted $\s^{m,1} :=  \left\{ v \in \R^{m,1} \quad \langle v,v \rangle =1 \right\}$,
\item the \emph{isotropic cone} of $\R^{m,1}$ as the set of lightlike vectors. \newline It will be denoted $ \mathcal{C}^{m,1} := \left\{ v \in \R^{m,1} \quad \langle v,v\rangle =0 \right\}$.
\end{itemize}
\end{de}

This definition is illustrated by the following figure.

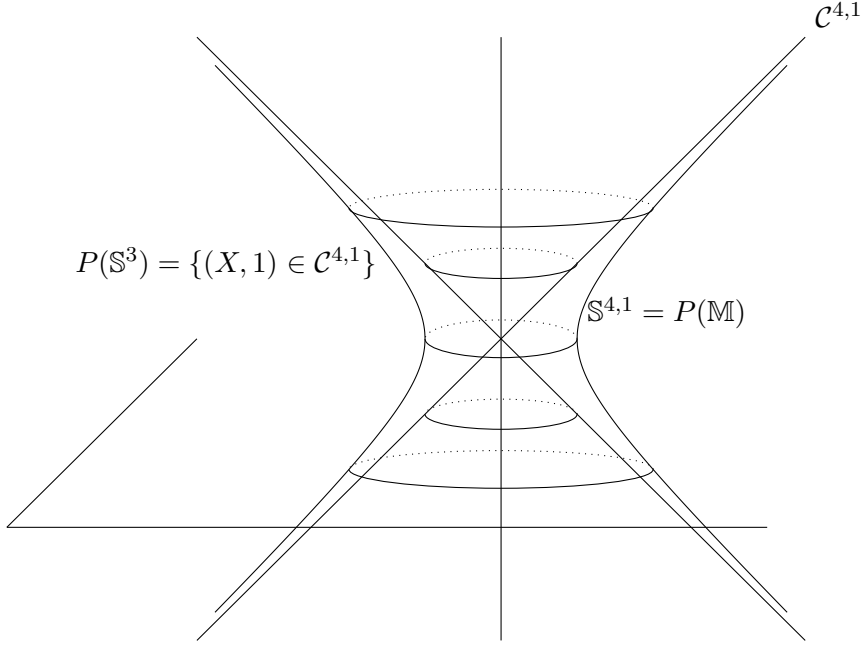
\begin{figure}[!h]
\centering
\begin{tikzpicture}
\draw[domain=0:2,smooth,variable=\t]
plot ({(exp(\t)+exp(-\t))/2},{(exp(\t) -exp(- \t))/2});
\draw[domain=-2:0,smooth,variable=\t]
plot ({(exp(\t)+exp(-\t))/2},{(exp(\t) -exp(- \t))/2});
\draw[domain=0:2,smooth,variable=\t]
plot ({-(exp(\t)+exp(-\t))/2},{(exp(\t) -exp(- \t))/2});
\draw[domain=-2:0,smooth,variable=\t]
plot ({-(exp(\t)+exp(-\t))/2},{(exp(\t) -exp(- \t))/2});
\draw (-1,0) arc (180:360:1cm and 0.25cm);
\draw[dotted] (1,0) arc (0:180:1cm and 0.25cm);
\draw (-2,1.7320508075689) arc (180:360:2cm and 0.25cm);
\draw[dotted] (2,1.7320508075689) arc (0:180:2cm and 0.25cm);
\draw (-2,-1.7320508075689) arc (180:360:2cm and 0.25cm);
\draw[dotted] (2,-1.7320508075689) arc (0:180:2cm and 0.25cm);

\draw (-4,-4) -- (0,0) ; \draw (0,0)--(4,4);
\draw (-4,4) -- (0,0); \draw (0,0)--(4,-4);
\draw (0,-4)-- (0,4);
\draw (-1,1) arc (180:360:1cm and 0.2cm);
\draw[dotted] (1,1) arc (0:180:1cm and 0.2cm);
\draw (-1,-1) arc (180:360:1cm and 0.2cm);
\draw[dotted] (1,-1) arc (0:180:1cm and 0.2cm);

\draw (1,0) node[above right] {$\mathbb{S}^{4,1} = P( \mathbb{M})$};
\draw (4,4) node[above right] {$\mathcal{C}^{4,1}$};
\draw (-1.5,1) node[left] {$P( \mathbb{S}^3) =  \{ (X,1) \in \mathcal{C}^{4,1} \}$};
\draw (-4,0)--(-6.5,-2.5);
\draw (-6.5,-2.5)--(3.5,-2.5);
\centering
\end{tikzpicture}
\caption{De Sitter and the isotropic cone.}
\end{figure}

One can realize the  image of $P_1$ is the set of all the space-like directions of $\R^{4,1}$  which is isomorphic to $\s^{4,1} / \{ \pm Id \}$.

We finally obtain our representation of non-oriented spheres : 

$$ P : \left\{ \begin{aligned}
\mathbb{M}_0  &\rightarrow \s^{4,1} / \{ \pm Id \} \\
\sigma &\mapsto  \frac{\underline{p} }{ \left\| \underline{p} \right\|} 
\end{aligned} \right.$$

where $\left\|\underline{p} \right\| = \sqrt{ \langle \underline{p_\sigma}, \underline{p_\sigma} \rangle }$.

$P$  is easily extended to $\mathbb{M}$ by taking the natural two covering of $ \s^{4,1} / \{ \pm Id \}$. Two opposite points in the De Sitter space then represent the same sphere with opposite orientations.

\begin{equation}
\label{s3}
 P : \left\{ \begin{aligned}
&\mathbb{M}  \rightarrow \s^{4,1}  \\
&\sigma \mapsto   \frac{\underline{p} }{ \left\| \underline{p} \right\|}= h \begin{pmatrix} X_\sigma \\ 1 \end{pmatrix} + \begin{pmatrix} \vec{N}_\sigma \\ 0 \end{pmatrix} \end{aligned} \right.
\end{equation}

for any $X_\sigma \in \sigma$. 

As $ h \rightarrow \infty$ (that is the radius of the sphere goes to $0$ and thus the sphere collapses on a point $X \in \s^3$), $\frac{P(\sigma)}{h} \rightarrow (X, 1)$, meaning that $P(\sigma)$ tends to $\infty$ in an isotropic direction of $\R^{4,1}$ bijectively and smoothly linked with the point of collapse $X$. One can then continuously extend $P$

\begin{equation}
\label{s3}
 P : \left\{ \begin{aligned}
&\mathbb{M} \cup \s^3  \rightarrow \s^{4,1} \cup \mathcal{C}^{4,1}  \\
&\sigma \in \mathbb{M} \mapsto     \frac{\underline{p} }{ \left\| \underline{p} \right\|} = h \begin{pmatrix} X_\sigma \\ 1 \end{pmatrix} + \begin{pmatrix}\vec{N}_\sigma \\ 0 \end{pmatrix} \in \s^{4,1} \\
& X \in \s^3 \mapsto \begin{pmatrix} X \\ 1 \end{pmatrix}  \in \mathcal{C}^{4,1}.  \end{aligned} \right.
\end{equation}

\subsection{The space of generalized spheres of $\R^3$}
Since the stereographic projection is  a conformal diffeomorphism, the set of non-oriented (respectively oriented) spheres and planes or $\R^3$ is in bijection with $\mathbb{M}_0$ (respectively $\mathbb{M}$) and can be represented using $P$.  Using  formula (\ref{projectionstereo7}) (see appendix \ref{sectionannexcalculss3}) one finds  

\begin{equation}
\label{r3}
 P : \left\{ \begin{aligned}
&\left( \R^3\cup \{\infty\} \right) \cup\mathbb{M}   \rightarrow \s^{4,1} \cup \mathcal{C}^{4,1}  \\
&\sigma \in \mathbb{M} \mapsto H_\sigma \begin{pmatrix} \Phi_\sigma \\ \frac{|\Phi_\sigma |^2 -1}{2} \\ \frac{|\Phi_\sigma|^2 +1}{2} \end{pmatrix} + \begin{pmatrix} \n_\sigma \\ \langle \n_\sigma, \Phi_\sigma \rangle \\ \langle \n_\sigma, \Phi_\sigma \rangle \end{pmatrix} \text{ for any } \Phi_\sigma \in \sigma \\
& \Phi \in \R^{3} \mapsto \begin{pmatrix} \Phi \\ \frac{ |\Phi|^2 - 1 }{2} \\ \frac{|\Phi|^2 +1}{2} \end{pmatrix} \in \mathcal{C}^{4,1} \\
& \infty \mapsto \begin{pmatrix} 0, 1,1 \end{pmatrix} \in \mathcal{C}^{4,1}.\end{aligned} \right.
\end{equation}

\subsection{The space of generalized spheres of $\Hy^3$}

Similarly consider $\mathbb{M}_{\Hy^3}$ the set of oriented geodesic spheres in $\Hy^3$. The function $\pi^{-1} \circ \tilde \pi$ sends $\Hy^3$ injectively into $\s^3$  and thus maps $\mathbb{M}_{\Hy^3}$ injectively into $\mathbb{M}$. $\mathbb{M}_{\Hy^3}$ can then be represented using $P$ (see formula (\ref{projectionhyper7}) in appendix \ref{sectionannexcalculsH3}) one finds  
\begin{equation}
\label{h3}
 P : \left\{ \begin{aligned}
&\Hy^3 \cup \mathbb{M}_{\Hy^3}  \rightarrow \s^{4,1}  \cup \mathcal{C}^{4,1}\\
&\sigma  \in \mathbb{M}_{\Hy^3}\mapsto H_\sigma^Z \begin{pmatrix} Z_{h \, \sigma} \\-1 \\  Z_{ 4 \, \sigma }\end{pmatrix} + \begin{pmatrix}  \n^Z_{ h \, \sigma} \\ 0 \\ \n^Z_{ 4 \, \sigma } \end{pmatrix} \text{ for any } \begin{pmatrix}  Z_{h\, \sigma}  \\  Z_{ 4 \, \sigma} \end{pmatrix} \in \sigma \\
& Z= \begin{pmatrix} Z_h \\ Z_4 \end{pmatrix} \in \Hy^3 \mapsto  \begin{pmatrix} Z_{h}\\ -1 \\ Z_4 \end{pmatrix} \in \mathcal{C}^{4,1}. \end{aligned} \right.
\end{equation}

\subsection{  $\mathrm{Conf}( \s^3) \simeq SO(4,1)$ }
As foreshadowed in subsection \ref{subsectionspaceofpsheres}, we can use $P$ to study conformal diffeomorphisms of $\s^3$.

\begin{theo}
$P$ realises an isomorphism between $\mathrm{Conf}( \s^3)$ and $SO(4,1)$.
\end{theo}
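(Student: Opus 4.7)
The plan is to construct a group isomorphism $\Psi \, : \, \mathrm{Conf}(\s^3) \to SO(4,1)$ satisfying the intertwining relation $\Psi(\varphi)\cdot P(\sigma) = P(\varphi(\sigma))$ for every $\sigma \in \mathbb{M} \cup \s^3$. By Theorem \ref{characterizationconformalmappingsspheres}, any $\varphi \in \mathrm{Conf}(\s^3)$ sends spheres of $\s^3$ to spheres, hence induces a bijection of $\mathbb{M}_0$; following orientations promotes it to a bijection of $\mathbb{M}$, which together with $\varphi$ itself yields a bijection of $\mathbb{M} \cup \s^3$ and, via $P$, a bijection $\Psi(\varphi) := P\circ \varphi \circ P^{-1}$ of $\s^{4,1}\cup \mathcal{C}^{4,1}$. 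Functoriality $\Psi(\varphi_1\circ \varphi_2) = \Psi(\varphi_1)\Psi(\varphi_2)$ is then tautological.

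The crux of the proof is a geometric lemma asserting that the Lorentzian form on $\R^{4,1}$ reads off the conformal geometry of $\s^3$. Using (\ref{s3}) one computes
$$\langle P(X),P(Y)\rangle = \langle X,Y\rangle_{\R^4}-1, \qquad \langle P(X),P(\sigma)\rangle = 0 \iff X\in\sigma,$$
and, for two oriented spheres $\sigma_1,\sigma_2$ intersecting at angle $\theta$, $\langle P(\sigma_1),P(\sigma_2)\rangle = \cos\theta$ (with parallel analogues in the non-intersecting case). Since conformal maps preserve both incidence and angles of intersection, $\Psi(\varphi)$ preserves the Lorentzian product on $\s^{4,1}\cup \mathcal{C}^{4,1}$; because this set spans $\R^{4,1}$, a standard argument extends $\Psi(\varphi)$ to a unique linear endomorphism of $\R^{4,1}$ preserving the form, hence to an element of $O(4,1)$. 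A continuity argument combined with Corollary \ref{corliouville}, each of whose generators is connected to the identity in $\mathrm{Conf}(\s^3)$, then forces $\Psi(\varphi)$ into the identity component $SO(4,1)$.

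Bijectivity is dispatched in two strokes. Injectivity: if $\Psi(\varphi) = \mathrm{Id}$ then $\varphi$ fixes every point of $\s^3 \hookrightarrow \mathcal{C}^{4,1}$, whence $\varphi = \mathrm{Id}$. Surjectivity follows from the proposition stated earlier in the introduction describing the action of $SO(4,1)$ on $\s^3$: for $M \in SO(4,1)$ the formula $X \mapsto V_\circ/V_5$ defines a smooth diffeomorphism $\varphi_M$ of $\s^3$ which, by the same geometric lemma, preserves spheres and angles (since $M$ preserves $\langle\cdot,\cdot\rangle$), hence is conformal, and by construction satisfies $\Psi(\varphi_M) = M$. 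The main obstacle is the geometric lemma above: without the identification of $\langle P(\sigma_1), P(\sigma_2)\rangle$ with an angle of intersection, the preservation of the Lorentzian form cannot be read as a conformal condition, and everything else is essentially bookkeeping around this identification.
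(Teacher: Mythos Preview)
Your approach is genuinely different from the paper's and, modulo two gaps discussed below, it is sound. The paper proceeds in the reverse direction: it defines $M\mapsto\varphi_M$ from $SO(4,1)$ to $\mathrm{Conf}(\R^3\cup\{\infty\})$, verifies conformality by a bare-hands metric computation $\langle\partial_i\varphi_M,\partial_j\varphi_M\rangle=\delta_{ij}/(y_5-y_4)^2$, checks the homomorphism property directly, and then obtains bijectivity from Liouville's theorem by exhibiting explicit matrices for translations, rotations, dilations and the inversion. Your route via the incidence/angle lemma is more conceptual and is the classical M\"obius-geometric argument; the paper's route is cruder but delivers the explicit action formulas of Corollary~\ref{actionr3s3} as a byproduct rather than as an input.

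Two points require repair. First, your continuity argument for landing in $SO(4,1)$ rather than $O(4,1)$ does not work as stated: the inversion $\iota$ is orientation-reversing on $\R^3$ and is not in the identity component of $\mathrm{Conf}(\s^3)$, so ``each generator is connected to the identity'' is false. You should instead compute the determinant of $\Psi(\varphi)$ on the generators of Corollary~\ref{corliouville}; the paper's explicit matrices (\ref{reprdilations})--(\ref{reprtranslations}) do exactly this, and one sees for instance that $\det M_\iota=(-1)^3\cdot 1\cdot(-1)=1$. (Be aware that the paper allows $\Theta\in O(3)$ in (\ref{reprrotations}), so the precise group on either side depends on whether one restricts to orientation-preserving maps.)

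Second, your ``geometric lemma'' $\langle P(\sigma_1),P(\sigma_2)\rangle=\cos\theta$ is the load-bearing step and you have only asserted it. It is a short computation from (\ref{s3}): write $P(\sigma_i)=h_i\binom{X_i}{1}+\binom{N_i}{0}$ at a common point $X\in\sigma_1\cap\sigma_2$, use $\langle X,X\rangle=1$, $\langle X,N_i\rangle=0$, and observe that $\langle N_1,N_2\rangle$ is precisely $\cos\theta$. Without this the rest of the argument has nothing to stand on, so it should appear explicitly. Note also that on the light cone your $\Psi(\varphi)$ is only defined up to scale, since $P(\varphi_M(X))=V/V_5$ rather than $V=M\,P(X)$; the extension to a linear map must therefore be read off the de Sitter piece $\s^{4,1}$, which does span $\R^{4,1}$.
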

\begin{proof}

According to proposition \ref{confr3s3}, showing $ \mathrm{Conf} \left( \R^3 \cup \{\infty \} \right) \simeq SO(4,1)$ is enough.  We will proceed in three steps : we will start by defining the correspondance, show that it represents  a morphism and conclude by proving it is bijective.

\begin{itemize}

\item

\underline{ \bf{ Step 1 : }} {\bf Defining the correspondance $ M \rightarrow \varphi_M$  }

The core idea here is that isotropic directions in $\R^{4,1}$ are in bijection with $\R^3 \cup \{ \infty \}$, and that any $M \in SO(4,1)$ shuffles them. Thus $M$ yields a transformation of $\R^3\cup \{\infty \}$. Its conformality is all one needs to prove.

Let $p(x) := \begin{pmatrix} x \\ \frac{|x|^2-1}{2} \\ \frac{|x|^2+1}{2} \end{pmatrix} = P_{|_{\R^3 \cup \{ \infty \} }}  (x)$. One easily shows that for all $i,j$  : $$\langle \partial_i p , \partial_j p \rangle = \delta_{ij},$$

that is $p : \, \R^3 \rightarrow P\left( \R^3 \cup \{ \infty \} \right)$ is an isometry. As $x \rightarrow \infty$, $\frac{p(x)}{|p(x)|} \rightarrow \begin{pmatrix} 0 \\ 1 \\1 \end{pmatrix}$. Noticing that $P\left( \R^3 \cup \{ \infty \} \right) = \left\{ p \in \mathcal{C}^{4,1} \text{ s.t. } p_5- p_4 = 1 \right\} \cup \{ (0,0,0,1,1 ) \}$, one can conversely associate to any $p \in \mathcal{C}^{4,1}$ a point $x = \frac{\left( p_1,p_2,p_3 \right) }{ p_5 - p_4} \in \R^3\cup \{\infty \}$ depending only on the direction of $p$.

Given $M \in SO(4,1)$ let $  y= Mp(x) = \begin{pmatrix} y_{\diamond} \\ y_4 \\y_5 \end{pmatrix} $. Then

$$ \begin{aligned}
\langle y_{\diamond}, y_{\diamond} \rangle &= y_5^2-y_4^2  \\
\langle \partial_i y_{\diamond}, y_{\diamond} \rangle &= \partial_i y_5 y_5 - \partial_i y_4 y_4 \\
\langle \partial_i y_{\diamond}, \partial_j y_{\diamond} \rangle &= \delta_{ij} +\partial_i y_5 \partial_j y_5 - \partial_i y_4 \partial_j y_4.
\end{aligned}$$

Renormalizing as suggested, let  $\varphi_M (x) = \frac{y_{\diamond}}{y_5-y_4} = p^{-1} \left( \frac{M p(x) }{ \left( M p(x) \right)_5- \left( M p(x) \right)_4} \right) $. $\varphi_M$ is a transformation of $\R^3 \cup \{ \infty \}$. Let us show it is conformal :

$$ \begin{aligned}
\langle \partial_i \varphi_M , \partial_j \varphi_M \rangle &=\left\langle  \frac{\partial_i y_{\diamond}}{y_5-y_4}- \frac{\left( \partial_i y_5 - \partial_i y_4 \right)  y_{\diamond}}{\left( y_5-y_4 \right)^2},  \frac{\partial_j y_{\diamond}}{y_5-y_4}- \frac{\left( \partial_j y_5 - \partial_j y_4 \right)  y_{\diamond}}{\left( y_5-y_4 \right)^2} \right\rangle \\
&=\frac{1}{ \left( y_5-y_4 \right)^2} \left\langle \partial_i y_{\diamond}, \partial_j y_{\diamond} \right\rangle  + \frac{\left( \partial_i y_5 - \partial_i y_4 \right) \left( \partial_j y_5 - \partial_j y_4 \right)}{ \left( y_5-y_4 \right)^3}    \langle y_{\diamond}, y_{\diamond} \rangle  \\ &
 - \frac{1}{ \left( y_5-y_4 \right)^3}  (\left( \partial_i y_5 - \partial_i y_4 \right) \langle \partial_j y_{\diamond} , y_{\_} \rangle - \left( \partial_j y_5 - \partial_j y_4 \right) \langle \partial_i y_{\diamond} , y_{\diamond} \rangle ) \\
&= \frac{\delta_{ij}}{ \left( y_5-y_4 \right)^2} + \frac{\partial_i y_5 \partial_j y_5 - \partial_i y_4 \partial_j y_4}{ \left( y_5-y_4 \right)^2} \\&+ \frac{1}{ \left( y_5-y_4 \right)^3}   \left( \partial_i y_5 - \partial_i y_4 \right) \left( \partial_j y_5 - \partial_j y_4 \right) \left( \partial_i y_5 y_5 - \partial_i y_4 y_4  \right) \\ 
 &- \frac{ \left( \partial_i y_5 - \partial_i y_4 \right) \left( \partial_j y_5 y_5 - \partial_j y_4 y_4\right)}{ \left( y_5-y_4 \right)^3}   \\
&  + \frac{ \left( \partial_j y_5 - \partial_j y_4 \right) \left(  \partial_i y_5 y_5 - \partial_i y_4 y_4 \right) }{ \left( y_5-y_4 \right)^3}\\
&=\frac{\delta_{ij}}{ \left( y_5-y_4 \right)^2}.
\end{aligned}$$
 Then $\varphi_M  \in \mathrm{Conf}(\R^3)$.

\item
\underline{ \bf{ Step 2 : }} {\bf $M \rightarrow \varphi_M$  is a morphism }

Given $M_1$ and $M_2 \in SO(4,1)$ we compute

$$\begin{aligned}
 \varphi_{M_1} \circ \varphi_{M_2} (x) &=p^{-1} \left( \frac{ M_1 \frac{ M_2 p(x) }{ \left( M_2 p(x) \right)_5 - \left( M_2 p(x) \right)_4} }{ \left( M_1  \frac{ M_2 p(x) }{ \left( M_2 p(x) \right)_5 - \left( M_2 p(x) \right)_4} \right)_5 - \left( M_1  \frac{ M_2 p(x) }{ \left( M_2 p(x) \right)_5 - \left( M_2 p(x) \right)_4} \right)_4 } \right) \\
&= p^{-1} \circ \left( \frac{M_1 M_2 p(x) }{ \left( M_1 M_2 p(x) \right)_5- \left( M_1M_2 p(x) \right)_4} \right) \\
&=  \varphi_{M_1 M_2}(x).
\end{aligned}$$

Thus $M \mapsto \varphi_M$ is a morphism between $ SO(4,1)$ and $\mathrm{Conf} \left( \R^3 \cup \{ \infty \} \right)$.
\item
\underline{ \bf{ Step 3 : }} {\bf  $ M \rightarrow \varphi_M$  is an isomorphism}

Bijectivity is the only property left to show. According to theorem \ref{liouville}, exhibiting $M \in SO(4,1)$ for dilations, translations, rotations and the inversion is enough to ensure surjectivity. Computing   we find

\noindent { \bf Dilations :}

For $D_\lambda (x) =  e^\lambda x$,
 \begin{equation}
\label{reprdilations}
M_{D_{\lambda}} = \begin{pmatrix} \mathrm{Id} & 0 & 0 \\ 0 & \mathrm{ch} \lambda & \mathrm{sh} \lambda \\ 0 & \mathrm{sh} \lambda & \mathrm{ch} \lambda \end{pmatrix} \in SO(4,1).
\end{equation}

\noindent { \bf Rotations :}

For $ R_\Theta (x) = \Theta x $, with $\Theta \in O(3)$,  
\begin{equation}
\label{reprrotations}
M_{R_\Theta} = \begin{pmatrix} \Theta & 0 & 0 \\ 0 & 1 & 0 \\ 0 & 0 & 1 \end{pmatrix} \in SO(4,1) .
\end{equation}

\noindent { \bf Inversion : }

For $\iota (x) = \frac{x}{|x|^2}$, 
\begin{equation}
\label{reprinversion}
M_{\iota} = \begin{pmatrix} -Id & 0 & 0 \\ 0 & 1 & 0 \\ 0 & 0 & -1 \end{pmatrix} \in SO(4,1).
\end{equation}

\noindent { \bf Translations : }
For $T_{\vec{a}}(x) = x + \vec{a}$, with $\vec{a} \in \R^3$, 
\begin{equation}
\label{reprtranslations}
M_{T_{\vec{a}}} = \begin{pmatrix} Id &-  \vec{a} & \vec{a} \\ \vec{a}^T &1- \frac{|\vec{a}|^2}{2} & \frac{|\vec{a}|^2}{2} \\ \vec{a}^T & - \frac{|\vec{a}|^2}{2} &  1+ \frac{|\vec{a}|^2}{2} \end{pmatrix} \in SO(4,1).
\end{equation}

$M \rightarrow \varphi_M$ is then surjective. With injectivity stemming from the uniqueness of the decomposition in theorem \ref{liouville}, $M \rightarrow \varphi_M$ is bijective, which concludes the proof.
\end{itemize}
\end{proof}

A direct consequence of the proof is the explicit formula for the conformal actions of  $ SO(4,1)$ on $\s^3$ and $\R^3$.

\begin{cor}[Action of $SO(4,1)$ on $\R^3$ and $\s^3$]
\label{actionr3s3}
$SO(4,1)$ acts transitively through conformal diffeomorphisms on

\begin{itemize}
\item
 $\s^3$  : 
$$  M.X =  \frac{V_\circ}{V_5}  $$ where $$V = M \begin{pmatrix} X \\ 1 \end{pmatrix} = \begin{pmatrix} V_\circ \\ V_5 \end{pmatrix}.$$
\item
 $\R^3$  : 
$$    M.x = \frac{y_\diamond}{y_5 -y_4} $$ where $$y =  M \begin{pmatrix} x \\ \frac{|x|^2-1}{2} \\ \frac{|x|^2+1}{2} \end{pmatrix}  = \begin{pmatrix} y_\diamond \\ y_4 \\y_5 \end{pmatrix}.$$ 
\end{itemize}
\end{cor}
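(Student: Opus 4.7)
The plan is to present the corollary as a direct read-off of the constructions performed in the preceding theorem's proof, with one small additional step to handle the sphere. The $\R^3$ formula is essentially the definition of $\varphi_M$ introduced in Step 1: we set $p(x)=(x,(|x|^2-1)/2,(|x|^2+1)/2)^T$, let $y=Mp(x)=(y_\diamond,y_4,y_5)^T$, and recover $\varphi_M(x)=y_\diamond/(y_5-y_4)$ by reading off the Euclidean coordinates from the projective class of $y$ in the isotropic cone. So the first bullet is nothing more than recalling this definition, and transitivity in this case follows from Liouville's theorem (Theorem \ref{liouville}) together with the explicit matrices for translations, rotations, dilations and the inversion written down in Step 3.

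For the $\s^3$ bullet, the cleanest route is to work directly with the representation $P|_{\s^3}(X)=(X,1)^T\in\mathcal{C}^{4,1}$ from (\ref{s3}). Since $M\in SO(4,1)$ preserves the Lorentzian form, the vector $V=M(X,1)^T=(V_\circ,V_5)^T$ is again isotropic, i.e.\ $|V_\circ|^2=V_5^2$. Hence whenever $V_5\neq 0$, the point $V_\circ/V_5$ lies in $\s^3$, and one declares $M.X:=V_\circ/V_5$. This is well-defined on an open dense subset (the complement being the locus where the corresponding point gets sent to the pole used in the stereographic projection, which is precisely the analogue of the point at infinity in the $\R^3$ picture).

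Next I would verify that this action coincides with $\pi^{-1}\circ\varphi_M\circ\pi$, which is the conformal action inherited from $\mathrm{Conf}(\R^3\cup\{\infty\})$ via Proposition \ref{confr3s3}. The key identity is the projective equivalence $P|_{\s^3}(\pi^{-1}(x))=\tfrac{2}{1+|x|^2}\,p(x)$, which is a one-line computation from the explicit formulas for $\pi^{-1}$ and $p$. This identity means $M$ acts on both representatives by the same matrix modulo scaling, and dehomogenizing in the $(V_\circ,V_5)$ coordinates on $\s^3$ produces the same point as dehomogenizing in the $(y_\diamond,y_5-y_4)$ coordinates on $\R^3$. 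Transitivity on $\s^3$ then either transfers through $\pi$ or is seen directly: the explicit matrices of Step 3, conjugated through $\pi$, supply all the conformal diffeomorphisms given by Corollary \ref{corliouville}.

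The only mild obstacle is organizational rather than technical: one has to keep track of two different affine charts on the isotropic cone (the $\R^3$-chart $\{p_5-p_4=1\}$ used in Step 1, and the $\s^3$-chart $\{p_5=1\}$ used here), and check that the two dehomogenization formulas match under $\pi$. Once the projective equivalence above is noted, the rest of the corollary is purely bookkeeping and no new computation is required.
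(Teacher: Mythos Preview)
Your approach is correct and aligns with the paper, which simply declares the corollary ``a direct consequence of the proof'' of the preceding theorem and gives no further argument; you in fact supply more detail on the $\s^3$ case (the projective equivalence $P|_{\s^3}(\pi^{-1}(x))=\tfrac{2}{1+|x|^2}\,p(x)$ and the compatibility via $\pi$) than the paper does. One small slip: you call the $\R^3$ formula ``the first bullet,'' but in the corollary $\s^3$ is listed first and $\R^3$ second.
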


While $\mathrm{Conf}(\s^3) \simeq SO(4,1)$ is well known, the explicit action of $SO(4,1)$ on elements of $\s^3$ is less commonly found.

\subsection{Representations in the three conformal models}
\label{representationsinthethreemodels}
Let $\Sigma$ be a Riemann surface. Let $\Phi \, : \, \Sigma \rightarrow \R^3$ an immersion. Let $g$ be the induced metric, $\n$ be its Gauss map, $H$ its mean curvature and $\Ar$ its tracefree second fundamental form defined as 

\begin{equation} \label{tracefreesecondfundamentalform} \Ar =g^{-1} A - \frac{H}{2} Id \end{equation}
with $A = \left\langle \nabla_g^2 \Phi, \n \right\rangle$ the second fundamental form.

We refer to $X = \pi^{-1} \circ \Phi $ as the representation of $\Phi $ in $\s^3$ and $Z = \tilde \pi^{-1} \circ \Phi$ as the representation of $\Phi$ in $\Hy^3$ (whenever $\Phi ( \Sigma) \subset B_1(0)$). We will often decompose $Z= (Z_h, Z_4)$ with $Z_h=(Z_1,Z_2,Z_3)$.

\section{The Conformal Gauss map}
\label{section2}
The previous considerations on the representation of spheres in the de Sitter space can be applied to the study of the geometry of immersed surface through the conformal Gauss map. To lighten notations, we will denote 

$$\begin{aligned}&p(\Phi) =  \begin{pmatrix} \Phi \\ \frac{ |\Phi|^2 -1}{2} \\ \frac{ |\Phi|^2+1}{2} \end{pmatrix} \text{ for } \Phi \in \R^3, \\
&p(X) = \begin{pmatrix} X \\1 \end{pmatrix} \text{ for } X \in \s^3,\\
&p(Z) = \begin{pmatrix} Z_h \\ -1 \\ Z_4 \end{pmatrix} \text{ for } Z = \begin{pmatrix} Z_h \\ Z_4 \end{pmatrix} \in \Hy^3. \end{aligned}$$

\subsection{Enveloping spherical congruences }
We first introduce  the notion of enveloping spherical congruences.
\begin{de}
Let $\Sigma$ be a Riemann surface. A spherical congruence on $\Sigma$ is a smooth application $ Y  \, : \, \Sigma \rightarrow \s^{4,1}$, that is, a family of oriented spheres parametrized on $\Sigma$. 
Given $\Phi \, : \, \Sigma \rightarrow \R^3$, or equivalently  $X$ its representation in $\s^3$,  or $Z=(Z_h,Z_4)$ in $\Hy^3$, $Y$ envelopes $\Phi$, or equivalently $X$ or $Z $, if and only if 
\begin{equation}
\label{enveloppe1phi}
 \left\langle Y, p(\Phi) \right\rangle = 0 
\end{equation}
and
\begin{equation}
\label{enveloppe2phi}
\left\langle Y, \nabla p(\Phi)  \right\rangle = 0,
\end{equation}
or equivalently
\begin{equation}
\label{enveloppe1X}
\left\langle Y,p(X) \right\rangle = 0 
\end{equation}
and
\begin{equation}
\label{enveloppe2X}
\left\langle Y, \nabla p(X) \right\rangle = 0,
\end{equation}
or 
\begin{equation}
\label{enveloppe1Z}
\left\langle Y,p(Z) \right\rangle = 0  
\end{equation}
and 
\begin{equation}
\label{enveloppe2Z}
\left\langle Y, \nabla p(Z) \right\rangle = 0.
\end{equation}
Geometrically speaking $Y$ envelopes $\Phi$ at the point $p \in \Sigma$ if the generalized sphere $Y(p)$ is tangent to $\Phi( \Sigma)$ at the point $\Phi(p)$.
\end{de}

\begin{proof}
Since $p(\Phi)$, $p(X)$ and $p(Z)$ are pairwise colinear, one finds (\ref{enveloppe1phi}), (\ref{enveloppe1X}) and (\ref{enveloppe1Z}) to be equivalent.

Moreover, assuming (\ref{enveloppe1phi}), (\ref{enveloppe1X}), and  (\ref{enveloppe1Z}),  one deduces

$$\left\langle Y, \nabla p(\Phi) \right\rangle =  \nabla \left( \left\langle Y, p(\Phi) \right\rangle \right) - \left\langle \nabla Y,p(\Phi) \right\rangle = - \left\langle \nabla Y,p(\Phi) \right\rangle,$$

$$\left\langle Y, \nabla p(X) \right\rangle =  \nabla \left( \left\langle Y, p(X) \right\rangle \right) - \left\langle \nabla Y,p(X) \right\rangle = - \left\langle \nabla Y, p(X) \right\rangle$$

and

$$\left\langle Y, \nabla p(Z) \right\rangle =  \nabla \left( \left\langle Y,  p(Z) \right\rangle \right) - \left\langle \nabla Y, p(Z) \right\rangle = - \left\langle \nabla Y,p(Z) \right\rangle,$$

which ensures that (\ref{enveloppe2phi}), (\ref{enveloppe2X}) and (\ref{enveloppe2Z}) are equivalent.
\end{proof}

\begin{ex}[The conformal Gauss map]
\label{ouonintroduitlaGaussconforme}
Let $\Sigma$ be a Riemann surface and $\Phi \, : \, \Sigma \rightarrow \R^3$. The conformal Gauss map $Y$, which to a point $z \in \Sigma$ associates the tangent sphere to the surface at $\Phi(z)$ of center $\Phi(z) + \frac{\n(z)}{|H(z)|}$
 if  $H(z) \neq 0$, and the tangent plane if $H(z)=0$, is a spherical congruence enveloping $\Phi$.

$Y$ can be written as : 

\begin{equation}
\label{Yphi}
 \begin{aligned}
Y &= H \begin{pmatrix} \Phi \\ \frac{ |\Phi|^2 -1}{2} \\ \frac{|\Phi|^2 +1}{2} \end{pmatrix} + \begin{pmatrix} \n \\ \langle \n , \Phi \rangle \\ \langle \n, \Phi \rangle \end{pmatrix}.
\end{aligned}
\end{equation}

One can notice : 
 $$ \begin{aligned}
\left\langle  p(\Phi) , p(\Phi)  \right\rangle &= 0 \\
\left\langle  p(\Phi)  , \nabla p(\Phi)   \right\rangle &= 0 \\
\left\langle  p(\Phi)  , \begin{pmatrix} \n \\ \langle \n , \Phi \rangle \\ \langle \n, \Phi \rangle \end{pmatrix}  \right\rangle &= 0 
\end{aligned}$$
and  in local coordinates
\begin{equation}
\label{lenabladuY} \begin{aligned}
\nabla Y &= \nabla H \begin{pmatrix} \Phi \\ \frac{ |\Phi|^2 -1}{2} \\ \frac{|\Phi|^2 +1}{2} \end{pmatrix}  + H \nabla \begin{pmatrix} \Phi \\ \frac{ |\Phi|^2 -1}{2} \\ \frac{|\Phi|^2 +1}{2} \end{pmatrix}  + \nabla \begin{pmatrix} \n \\ \langle \n , \Phi \rangle \\ \langle \n, \Phi \rangle \end{pmatrix}  \\
&=\nabla H \begin{pmatrix} \Phi \\ \frac{ |\Phi|^2 -1}{2} \\ \frac{|\Phi|^2 +1}{2} \end{pmatrix}  + H \begin{pmatrix} \nabla \Phi \\ \langle \nabla \Phi , \Phi \rangle \\ \langle \nabla \Phi, \Phi \rangle \end{pmatrix}  +  \begin{pmatrix} \nabla \n \\ \langle \nabla \n , \Phi \rangle \\ \langle\nabla \n, \Phi \rangle \end{pmatrix}  \\
&=\nabla H \begin{pmatrix} \Phi \\ \frac{ |\Phi|^2 -1}{2} \\ \frac{|\Phi|^2 +1}{2} \end{pmatrix}  -  \Ar \begin{pmatrix} \nabla \Phi \\ \langle \nabla \Phi , \Phi \rangle \\ \langle \nabla \Phi, \Phi \rangle \end{pmatrix}.
\end{aligned}\end{equation}
Hence :  
\begin{equation}
\label{lametriqueduY} \begin{aligned}\langle \partial_i Y, \partial_j Y \rangle &= \langle \Ar^p_i \partial_p \Phi, \Ar^q_j \partial_q \Phi \rangle \\
&= \Ar^p_i \Ar_{pj} = \Ar^p_i \Ar_p^l g_{jl} = \left( \Ar^T \Ar g \right)_{ij} \\
&= \frac{1}{2}|\Ar|^2 g_{ij} \text{ since } \Ar \text{ is symetric tracefree}.
\end{aligned}\end{equation}
Indeed since $\Ar$ is symetric tracefree it can be written $\Ar = \begin{pmatrix} \omega & \varphi \\ \varphi & - \omega \end{pmatrix}$ for $\omega, \varphi \in \R$ and thus $\Ar^T \Ar = \left( \omega^2 + \varphi^2 \right) Id$ while $ \left| \Ar \right| = 2 \left( \omega^2 + \varphi^2 \right)$. Hence 
\begin{equation}
\label{lauxiliaredelametriqueduY}
\Ar^p_i \Ar_p^l g_{jl} =  \frac{1}{2}\big|\Ar \big|^2 g_{ij}.
\end{equation}
We then deduce that $ Y \, : \,\left( \Sigma,g \right) \rightarrow \s^{4,1}$ is conformal. One may notice that the umbilic points of $\Phi$ are critical points of $Y$.
\end{ex}

As an enveloping spherical congruence, the conformal Gauss map carry many informations on the geometry of the immersion. Its key role is further emphasized by the fact it is the only conformal enveloping spherical congruence, up to orientation.

\begin{theo}
\label{Yseulecongruenceenveloppante}
Let $\Sigma$ be a Riemann surface and $\Phi \, : \, \Sigma \rightarrow \R^3$ an immersion. We denote $g$ its first fundamental form and $Y$ its conformal Gauss map.
If the set of umbilic points of  $\Phi $ is nowhere dense then $Y$ and $-Y$ are the only smooth conformal $(\Sigma , g ) \rightarrow \R^3$ spherical congruences enveloping $\Phi$.
\end{theo}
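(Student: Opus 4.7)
The plan is to take any smooth conformal enveloping spherical congruence $\tilde Y \, : \, (\Sigma, g) \rightarrow \s^{4,1}$ and show it must coincide with $\pm Y$. The argument proceeds in three steps, of which the third is where the assumption on umbilic points actually bites.

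First I would exploit the envelopment relations to pin down the algebraic form of $\tilde Y$. Since $p(\Phi)$ is lightlike and $\langle p(\Phi), \partial_i p(\Phi)\rangle = \tfrac{1}{2}\partial_i\langle p(\Phi), p(\Phi)\rangle = 0$, and since $p$ is (semi-Riemannian) isometric so that $\langle \partial_i p(\Phi), \partial_j p(\Phi)\rangle = g_{ij}$, the triple $\{p(\Phi), \partial_1 p(\Phi), \partial_2 p(\Phi)\}$ spans a $3$-dimensional subspace $W$ of $\R^{4,1}$. Its orthogonal $W^\perp$ is therefore $2$-dimensional. The conformal Gauss map $Y$ lies in $W^\perp$ (example \ref{ouonintroduitlaGaussconforme}), and so does $p(\Phi)$ itself; as $Y$ is spacelike and $p(\Phi)$ lightlike they are linearly independent, hence span $W^\perp$. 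Applying the envelopment conditions (\ref{enveloppe1phi}) and (\ref{enveloppe2phi}) to $\tilde Y$ therefore forces
\begin{equation*}
\tilde Y = a\, Y + b\, p(\Phi)
\end{equation*}
for smooth functions $a, b \, : \, \Sigma \rightarrow \R$.

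Second, I would use the spacelike unit constraint $\langle \tilde Y, \tilde Y\rangle = 1$. Combined with $\langle Y, Y\rangle = 1$ and $\langle Y, p(\Phi)\rangle = \langle p(\Phi), p(\Phi)\rangle = 0$, this reduces instantly to $a^2 = 1$, so $a$ is continuous with values in $\{\pm 1\}$, hence by connectedness of $\Sigma$ globally constant: $a \equiv \varepsilon \in \{\pm 1\}$.

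Third, I would enforce the conformality of $\tilde Y$. Differentiating $\tilde Y = \varepsilon Y + b\, p(\Phi)$ and using (\ref{lenabladuY}) to read off $\langle \partial_i Y, \partial_j p(\Phi)\rangle = -\Ar_{ij}$ and $\langle \partial_i Y, p(\Phi)\rangle = 0$, together with $\langle p(\Phi), p(\Phi)\rangle = \langle p(\Phi), \partial_j p(\Phi)\rangle = 0$, one computes
\begin{equation*}
\langle \partial_i \tilde Y, \partial_j \tilde Y\rangle = \tfrac{1}{2}|\Ar|^2 g_{ij} \; -\; 2\varepsilon b\, \Ar_{ij} \; +\; b^2 g_{ij}.
\end{equation*}
Conformality demands the right-hand side be a scalar multiple of $g_{ij}$; since $\Ar$ is symmetric and tracefree, its only scalar-multiple-of-$g$ is zero, forcing $b\, \Ar \equiv 0$. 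At any non-umbilic point $\Ar \neq 0$, so $b$ vanishes there; the umbilic set being nowhere dense and $b$ continuous, $b \equiv 0$ on $\Sigma$. Therefore $\tilde Y = \varepsilon Y = \pm Y$.

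The principal technical nuisance is organizing the computation in step three cleanly enough to isolate the tracefree $\Ar_{ij}$ contribution: the many cross terms involving $\partial_i b$ and $(\partial_i b) p(\Phi)$ must collapse via the orthogonality of $p(\Phi)$ to itself, to $\partial_j p(\Phi)$ and to $\partial_j Y$. Once that identity is in place, the nowhere-dense umbilic hypothesis closes the argument transparently.
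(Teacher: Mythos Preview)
Your proof is correct and follows essentially the same approach as the paper: decompose the candidate congruence as $aY + b\,p(\Phi)$ using the enveloping conditions, fix $a=\pm 1$ from the unit-norm constraint, and then read off from the induced metric that conformality forces the $-2\varepsilon b\,\Ar_{ij}$ term to vanish, hence $b\equiv 0$ by density of non-umbilic points. The only cosmetic differences are that you invoke connectedness of $\Sigma$ explicitly to make $a$ globally constant, and you correctly retain the sign $\varepsilon$ in the cross term (the paper silently drops the corresponding $\mu$, which is harmless since $\mu^2=1$).
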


\begin{proof}
As stated when we introduced it, the conformal Gauss map is a spherical congruence enveloping $\Phi$ which happens to be conformal.

 Conversely we consider a spherical congruence $G$ enveloping $\Phi$. 
 
Let $E =  \mathrm{Vect} \left(p(\Phi), \partial_x p(\Phi), \partial_y p(\Phi) \right)$.
 Equations (\ref{enveloppe1phi}) and (\ref{enveloppe2phi}) force $G$ to lie in $\left(E \right)^\perp$. Since $\Phi$ is an immersion, $E$ is of dimension $3$, and its orthogonal is then of dimension $2$. $Y$ envelopes $\Phi$  and $p(\Phi)$is isotropic, hence $\left( Y, p(\Phi)\right)$ is  a basis of $(E)^\perp$. $G$ can then be written as 
$$ G = \mu Y + \lambda p(\Phi)$$
with $\mu$, $\lambda \in \R$.

Since $\langle G, G \rangle = \mu^2 = 1$ one finds $\mu = \pm 1$ and deduce $\nabla \mu =0$.
We then need only to compute the first fundamental form of $G$ :
$$ \begin{aligned}\langle \partial_i G, \partial_j G \rangle &= \left\langle \mu \partial_i Y + \partial_i \lambda p(\Phi) + \lambda \begin{pmatrix} \partial_i \Phi \\\langle \partial_i \Phi , \Phi \rangle \\ \langle \partial_i \Phi , \Phi \rangle \end{pmatrix} ,  \mu \partial_j Y + \partial_j \lambda p(\Phi) + \lambda \begin{pmatrix} \partial_j \Phi \\\langle \partial_j \Phi , \Phi \rangle \\ \langle \partial_j \Phi , \Phi \rangle \end{pmatrix}\right\rangle  \\
&= \left\langle  \lambda \begin{pmatrix} \partial_i \Phi \\\langle \partial_i \Phi , \Phi \rangle \\ \langle \partial_i \Phi , \Phi \rangle \end{pmatrix} - \Ar^p_i \begin{pmatrix} \partial_p \Phi \\\langle \partial_p \Phi , \Phi \rangle \\ \langle \partial_ \Phi , \Phi \rangle \end{pmatrix},    \lambda \begin{pmatrix} \partial_j \Phi \\\langle \partial_j \Phi , \Phi \rangle \\ \langle \partial_j \Phi , \Phi \rangle \end{pmatrix} - \Ar^p_j \begin{pmatrix} \partial_p \Phi \\\langle \partial_p \Phi , \Phi \rangle \\ \langle \partial_ \Phi , \Phi \rangle \end{pmatrix} \right\rangle \end{aligned} $$
 using expression (\ref{lenabladuY}) of $\nabla Y$ and the fact that $p(\Phi) \in E^\perp$. Then 
$$\begin{aligned} \langle \partial_i G, \partial_j G \rangle &=  \langle \lambda \partial_i \Phi - \Ar^p_i \partial_p \Phi, \lambda \partial_j \Phi -\Ar^q_j \partial_q \Phi \rangle \\
&=\lambda^2 g_{ij} + \Ar^p_i \Ar_{pj} - 2 \lambda \Ar_{ij} \\
&= \left( \lambda^2 +\frac{|\Ar|^2}{2} \right)g_{ij}- 2 \lambda \Ar_{ij} 
\end{aligned}$$
where we have used (\ref{lauxiliaredelametriqueduY}). By hypothesis the set of umbilic points is nowhere dense,  $G$ is then conformal if and only if $\lambda =0$. We then have $G = \pm Y$ which concludes the proof. 
\end{proof}

Taking $-Y$ instead of $Y$ is tantamount to changing the orientation of the surface (taking $-\n$ instead of $\n$ as Gauss map).

Geometrically speaking $Y$ can be seen as the $2$ dimensional generalization of the osculating circles for curves in euclidian spaces, and it  will be of major importance in the study of Willmore surfaces, playing much of the same role as the Gauss map in the case of constant mean curvature surfaces.

\subsection{The conformal Gauss map in the three representations }
Since $Y$ conserves the conformal structure on $\Sigma$  it is convenient, and will not induce any loss of generality, to work in complex coordinates in local conformal charts. 
In the following we will then consider $\Phi \, : \, \D \rightarrow \R^3$ a smooth conformal immersion, that is satisfying $\left\langle \Phi_z, \Phi_z \right\rangle = 0$. Let $\n = \frac{\Phi_z \times \Phi_{\zb} }{ i \left| \Phi_z \right|^2 }$ denote its Gauss map with $\times$ the classic vectorial product in $\R^3$, $\lambda = \frac{1}{2}\log \left( 2 \left| \Phi_z \right|^2 \right)$ its conformal factor and $H = \left\langle \frac{ \Phi_{z \zb}}{ \left| \Phi_z \right|^2} , \n \right\rangle $ its mean curvature. Its tracefree curvature is defined as follows 
$$\Omega:= 2\left\langle \Phi_{zz}, \n \right\rangle. $$

\vspace{5mm}
Its representation in $\s^3$,  $X = \pi^{-1} \circ \Phi =\frac{1}{1 + \left| \Phi \right|^2} \begin{pmatrix} 2 \Phi  \\ \left| \Phi \right|^2 -1 \end{pmatrix}$ is conformal. Let  $\Lambda := \frac{1}{2} \log \left( 2 \left| X_z \right|^2 \right)$ be its conformal factor,  $\vec{N}$ such that $\left( X, e^{-\Lambda} X_x, e^{- \Lambda} X_y, \vec{N} \right)$ is a direct orthonormal basis of $\R^4$ its Gauss map ,  $ h = \left\langle  \frac{X_{z\zb}}{\left| X_z \right|^2}, \vec{N} \right\rangle$ its  mean curvature and $\omega := 2\left\langle X_{zz}, \vec{N} \right\rangle$ its tracefree curvature.
\vspace{5mm}
Similarly its representation in $\Hy^3$, $Z = \tilde \pi^{-1} \circ \Phi$ is conformal. Let  $\lambda^Z :=\frac{1}{2} \log \left( 2 \langle Z_z, Z_{\zb} \rangle \right)$ be its conformal factor,$ \n^Z$ such that $( Z, e^{-\lambda^Z} Z_x, e^{-\lambda^Z} Z_y, \n^Z)$ is a direct orthonormal basis of $\R^{3,1}$ its  Gauss map ,  $ H^Z = \left\langle \frac{Z_{z\zb}}{\left| Z_z \right|^2} , \n^Z \right\rangle$ its mean curvature and  $\Omega^Z := 2 \left\langle Z_{zz} , \n^Z \right\rangle$ its tracefree curvature.
One can then express $Y$ as the conformal Gauss map of an immersion in $\s^3$ or in $\Hy^3$.

\begin{prop}
\label{lapropositiondelexpressiondeY}
Let $\Phi$ be a smooth conformal immersion on $\D$, and $X$ (respectively $Z$) its representation in $\s^3$ (respectively $\mathbb{H}^3$) through $\pi$ (respectively $\tilde \pi$).  Let $Y$ be its conformal Gauss map. Then
$$ \begin{aligned}
Y &= h \begin{pmatrix} X \\ 1 \end{pmatrix} + \begin{pmatrix} \vec{N} \\ 0 \end{pmatrix} \\
&= H^Z \begin{pmatrix} Z_{h} \\-1 \\Z_4 \end{pmatrix} + \begin{pmatrix} \n^Z_{h} \\ 0 \\ \n^Z_4 \end{pmatrix}\end{aligned}$$
 where $Z = \begin{pmatrix} Z_h \\ Z_4 \end{pmatrix}$ and $\n^Z = \begin{pmatrix} \n^Z_h \\ \n^Z_{4} \end{pmatrix}$, while $h$ and $H^Z$ are the respective mean curvatures.
\end{prop}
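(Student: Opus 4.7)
The plan is to apply the uniqueness statement of Theorem \ref{Yseulecongruenceenveloppante} to the candidate
$$\tilde Y := h \begin{pmatrix} X \\ 1 \end{pmatrix} + \begin{pmatrix} \vec{N} \\ 0 \end{pmatrix}$$
(and analogously $\tilde Y^{\Hy^3} := H^Z \begin{pmatrix} Z_h \\ -1 \\ Z_4 \end{pmatrix} + \begin{pmatrix} \n^Z_h \\ 0 \\ \n^Z_4 \end{pmatrix}$ in the $\Hy^3$ representation). It suffices to show that $\tilde Y$ is a smooth conformal spherical congruence enveloping $\Phi$; the theorem then forces $\tilde Y = \pm Y$, and an orientation check forces the sign to be $+$.

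First I would check that $\tilde Y$ takes values in $\s^{4,1}$. Since $X \in \s^3$ and the frame $(X, e^{-\Lambda}X_x, e^{-\Lambda}X_y, \vec{N})$ is orthonormal in $\R^4$, one has $|X|^2 = 1$, $\langle X, \vec{N}\rangle = 0$ and $|\vec{N}|^2 = 1$, so the Lorentzian square collapses to
$$\langle \tilde Y, \tilde Y \rangle = h^2 (|X|^2 - 1) + 2 h \langle X, \vec{N}\rangle + |\vec{N}|^2 = 1.$$
Next I would verify the enveloping conditions (\ref{enveloppe1X})--(\ref{enveloppe2X}): a direct computation gives $\langle \tilde Y, p(X)\rangle = h(|X|^2-1) + \langle \vec{N}, X\rangle = 0$ and $\langle \tilde Y, \partial_i p(X)\rangle = h\langle X, \partial_i X\rangle + \langle \vec{N}, \partial_i X\rangle = 0$, the first term vanishing because $|X|^2$ is constant and the second because $\vec{N}$ is normal to the immersion within $\s^3$.

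The heart of the argument is the conformality of $\tilde Y$. Mimicking example \ref{ouonintroduitlaGaussconforme}, I would expand
$$\partial_i \tilde Y = \partial_i h \begin{pmatrix} X \\ 1 \end{pmatrix} + h \begin{pmatrix} \partial_i X \\ 0 \end{pmatrix} + \begin{pmatrix} \partial_i \vec{N} \\ 0 \end{pmatrix}$$
and note that $\partial_i \vec{N}$ is tangent to the immersion: indeed $\langle \vec{N}, \vec{N}\rangle = 1$ and $\langle \vec{N}, X\rangle = 0$ give $\langle \partial_i \vec{N}, \vec{N}\rangle = 0$ and $\langle \partial_i \vec{N}, X\rangle = -\langle \vec{N}, \partial_i X\rangle = 0$, so the Weingarten-type formula $\partial_i \vec{N} = -A^{X\, k}_i \partial_k X$ applies, with $A^X$ the second fundamental form of $X$ in $\s^3$. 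Grouping terms yields the structural analog of (\ref{lenabladuY}), after which the computation (\ref{lametriqueduY})--(\ref{lauxiliaredelametriqueduY}) transposes verbatim to give $\langle \partial_i \tilde Y, \partial_j \tilde Y\rangle = \tfrac{1}{2}|\Ar^X|^2\, g^X_{ij}$, establishing conformality. Theorem \ref{Yseulecongruenceenveloppante} then yields $\tilde Y = \pm Y$, and the sign is pinned down by evaluating both expressions at one point using the stereographic differential (or, equivalently, by noting that $d\pi^{-1}$ is orientation-preserving and conformal, so $\vec{N}$ corresponds to $\n$ up to a positive conformal factor). The $\Hy^3$ identity is handled identically using $\langle Z, Z\rangle_{3,1} = -1$, $\langle \n^Z, Z\rangle_{3,1} = 0$ and the analogous Weingarten relation in $\Hy^3$. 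The main technical obstacle is the conformality computation, but since $\vec{N}$ (resp.\ $\n^Z$) is tangent to the model space, the radial (resp.\ timelike) direction drops out and the bookkeeping reduces to the Euclidean one.
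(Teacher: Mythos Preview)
Your approach is genuinely different from the paper's. The paper's proof is a bare-hands computation in the appendix: it writes out $X$, $\vec N$, $e^{2\Lambda}$, $h$ explicitly in terms of $\Phi$, $\n$, $\lambda$, $H$ via the stereographic projection (formulas (\ref{projectionstereo0})--(\ref{projectionstereo5})), and then substitutes these into $h\,p(X)+(\vec N,0)$ and simplifies, arriving line by line at the defining expression (\ref{Yphi}) for $Y$ (this is (\ref{projectionstereo7})). The $\Hy^3$ case is the analogous computation (\ref{projectionhyper7}). No uniqueness principle is invoked; the identity is verified pointwise and holds unconditionally.

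Your route via Theorem~\ref{Yseulecongruenceenveloppante} is cleaner conceptually, but it carries two costs. First, Theorem~\ref{Yseulecongruenceenveloppante} assumes the umbilic set is nowhere dense, an hypothesis Proposition~\ref{lapropositiondelexpressiondeY} does not make. You would have to patch this: on the open non-umbilic set your argument gives $\tilde Y=\pm Y$, continuity propagates the identity to its closure, but if the immersion is totally umbilic on some open subset you still owe a separate verification there. Second, your sign check (``$d\pi^{-1}$ is orientation-preserving so $\vec N$ corresponds to $\n$ up to a positive factor'') is exactly the content of formula (\ref{projectionstereo2}); making it rigorous means computing $\vec N$ in terms of $\n$, at which point you are doing part of the direct computation anyway. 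So the uniqueness approach is elegant when the umbilic hypothesis is available, but the paper's direct substitution is what proves the statement in full generality without caveats.
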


\begin{proof}
Computations are done in appendix.

\end{proof}

\subsection{Conformally CMC immersions}
A quick study of proposition \ref{lapropositiondelexpressiondeY} and (\ref{Yphi}) reveals that the mean curvature in the three models can be written as a function of $Y$, with interesting geometric interpretations.
\begin{cor}
Let $\Phi$ be a smooth conformal immersion on $\D$, and $X$ (respectively $Z$) its representation in $\s^3$ (respectively $\mathbb{H}^3$) through $\pi$ (respectively $\tilde \pi$).  Let $Y$ be its conformal Gauss map. 
Then 
\begin{equation}
\label{HenfonctiondeY}
\begin{aligned}
H &= Y_5-Y_4, \\
h &= Y_5, \\
H^Z &=- Y_4. 
\end{aligned}
\end{equation}
\end{cor}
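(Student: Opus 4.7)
The plan is to read the three identities directly off the three explicit expressions of $Y$ already at our disposal, namely formula (\ref{Yphi}) for the $\R^3$ representation and the two expressions in proposition \ref{lapropositiondelexpressiondeY} for the $\s^3$ and $\Hy^3$ representations. No new geometry is needed; these are essentially component identifications.

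First, for the identity $H = Y_5 - Y_4$, I would take the two bottom components of (\ref{Yphi}): one finds
$$Y_4 = H\,\frac{|\Phi|^2-1}{2} + \langle \n,\Phi\rangle, \qquad Y_5 = H\,\frac{|\Phi|^2+1}{2} + \langle \n,\Phi\rangle.$$
Subtracting cancels both $\langle \n,\Phi\rangle$ and the $|\Phi|^2$ contributions and leaves $Y_5-Y_4 = H$.

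For the identity $h = Y_5$, I would use the $\s^3$ expression $Y = h\,(X,1)^T + (\vec{N},0)^T$ of proposition \ref{lapropositiondelexpressiondeY}: the fifth coordinate is $h\cdot 1 + 0 = h$. Symmetrically, for $H^Z = -Y_4$, the $\Hy^3$ expression $Y = H^Z\,(Z_h,-1,Z_4)^T + (\n^Z_h,0,\n^Z_4)^T$ gives directly $Y_4 = -H^Z$.

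There is essentially no obstacle here: the corollary is a bookkeeping consequence of proposition \ref{lapropositiondelexpressiondeY} and formula (\ref{Yphi}), so the whole proof should be a one-line reference to those two formulas followed by extraction of the relevant coordinates. The only minor subtlety worth noting is a sanity check of consistency between the three identities via the projections $\pi$ and $\tilde\pi$: since $\pi$ and $\tilde\pi$ are conformal, the three representations describe the same $Y$, so the three identities must be mutually compatible once one knows how $H$, $h$ and $H^Z$ transform under $\pi$ and $\tilde\pi$; this provides an internal consistency check but is not needed for the proof itself.
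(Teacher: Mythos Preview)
Your proof is correct and follows exactly the approach the paper intends: the corollary is stated as a direct consequence of proposition \ref{lapropositiondelexpressiondeY} and formula (\ref{Yphi}), and you extract the relevant coordinates precisely as required. The paper itself gives no further argument beyond ``a quick study'' of those two formulas, so your write-up is, if anything, more explicit than the original.
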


We denote $$v_s = \begin{pmatrix} 0\\0\\0\\1\\0\end{pmatrix}, \, v_t=\begin{pmatrix}0\\0\\0\\0\\1\end{pmatrix} \text{, } v_l = \begin{pmatrix}0\\0\\0\\1\\1\end{pmatrix}.$$
One deduces immediately from this that $\Phi$ is minimal (respectively of constant mean curvature) if and only if $Y_4 = Y_5$ (respectively if there exists a constant $H_0 \in \R$ such that $Y_5- Y_4 -H_0 = 0$), $X$ is minimal (respectively of constant mean curvature) if and only $Y_5= 0$ (respectively if there exists a constant $h_0 \in \R$ such that $Y_5-h_0= 0$), $Z$ is minimal (respectively of constant mean curvature)  if and only if $Y_4 =0$ (respectively if there exists a constant $H^Z_0 \in \R$ such that $Y_4+ H^Z_0= 0$). This can be reframed as : $\Phi$ is minimal (respectively CMC) if and only if $Y$ is in a linear (respectively affine) hyperplane of lightlike normal $v_l$,  $X$ is minimal (respectively CMC) if and only if $Y$ is in a linear (respectively affine) hyperplane of timelike normal $v_t$,  $Z$ is minimal (respectively CMC) if and only if $Y$ is in a linear (respectively affine) hyperplane of spacelike normal $v_s$.

We now dispose of a geometric characterization for the conformal Gauss maps of minimal surfaces in any of the three models. It is interesting to study how this condition, and thus $Y$, change under the action of conformal diffeomorphisms.

\begin{prop}
\label{modificationYtransformationconforme}
Let $\varphi \in \mathrm{Conf}( \s^3)$ corresponding to $M \in SO(4,1)$. 
Let $X \, : \, \Sigma \rightarrow \s^3$ be a smooth conformal immersion of conformal Gauss map $Y$.  We assume the  set of umbilic points of $X$ to be nowhere dense.
Let $Y_\varphi$ be the conformal Gauss map of $\varphi \circ X$.
Then $$Y_\varphi = M Y.$$
\end{prop}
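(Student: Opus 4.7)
The plan is to invoke the uniqueness of the conformal enveloping spherical congruence (Theorem \ref{Yseulecongruenceenveloppante}, whose proof transfers verbatim to the $\s^3$-setting). I will verify that $MY$ is a smooth conformal spherical congruence enveloping $\varphi \circ X$; this will force $Y_\varphi = \pm MY$, and a sign argument will conclude.

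First, $MY$ takes values in $\s^{4,1}$: since $M \in SO(4,1)$ preserves the Lorentzian form, $\langle MY, MY\rangle = \langle Y, Y\rangle = 1$. Second, the enveloping conditions follow from the matrix-action formula of Corollary \ref{actionr3s3}. Writing $V = Mp(X)$ with components $\binom{V_\circ}{V_5}$, that corollary gives $\varphi \circ X = V_\circ / V_5$, hence
\[
p(\varphi \circ X) \;=\; \frac{1}{V_5}\, Mp(X).
\]
Thus $p(\varphi\circ X)$ is proportional to $Mp(X)$, which immediately yields $\langle MY, p(\varphi\circ X)\rangle = \frac{1}{V_5}\langle Y, p(X)\rangle = 0$. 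Differentiating the identity $Mp(X) = V_5\, p(\varphi\circ X)$ and taking the Lorentzian inner product with $MY$ gives
\[
\langle MY, M\nabla p(X)\rangle \;=\; \nabla V_5 \,\langle MY, p(\varphi\circ X)\rangle + V_5 \,\langle MY, \nabla p(\varphi\circ X)\rangle,
\]
and since the left-hand side equals $\langle Y, \nabla p(X)\rangle = 0$ by enveloping of $X$ by $Y$, and the first term on the right vanishes by the previous step, we conclude $\langle MY, \nabla p(\varphi\circ X)\rangle = 0$.

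Third, $MY$ is conformal as a map $(\Sigma, g_{\varphi\circ X}) \to \s^{4,1}$: the metric induced by $MY$ coincides with the one induced by $Y$ (since $M$ is a Lorentzian isometry), $Y$ is conformal with respect to $g_X$, and $g_{\varphi\circ X}$ is pointwise proportional to $g_X$ because $\varphi$ is conformal. Combining these three points with the nowhere-dense umbilic assumption, the uniqueness theorem applied to $\varphi\circ X$ gives $Y_\varphi = \varepsilon \, MY$ with $\varepsilon \in \{\pm 1\}$ locally constant.

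The main obstacle is to pin down the sign. I would argue by connectedness: the map $M \mapsto \varepsilon(M)$ is continuous on $SO(4,1)$ (after fixing $X$ and a base point of $\Sigma$) and satisfies $\varepsilon(\mathrm{Id}) = +1$; on the identity component this forces $\varepsilon \equiv +1$. For the other component of $SO(4,1)$, one checks the sign directly on an explicit representative using the formula $Y = h\binom{X}{1} + \binom{\vec N}{0}$ of Proposition \ref{lapropositiondelexpressiondeY}: because $M$ has determinant $+1$, the orientation of the oriented tangent sphere $Y(p)$ is preserved under the geometric action of $\varphi$ on the space of oriented spheres, so that $MY(p)$ is precisely the oriented sphere envisaged by the conformal Gauss map $Y_\varphi(p)$ of $\varphi\circ X$, giving $\varepsilon = +1$ there as well.
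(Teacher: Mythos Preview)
Your argument matches the paper's for the first three steps: values in $\s^{4,1}$, the enveloping conditions via $p(\varphi\circ X)=\frac{1}{V_5}Mp(X)$, and conformality of $MY$. The divergence is in the sign determination.

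The paper settles the sign by a single uniform computation. With $W=M\binom{\vec N}{0}$ one has $d\varphi(\vec N)=\frac{W_\circ}{V_5}-\frac{W_5}{V_5^2}V_\circ$, hence $\vec N_\varphi=W_\circ-\frac{W_5}{V_5}V_\circ$, and then
\[
\left\langle MY,\binom{\vec N_\varphi}{0}\right\rangle
=\left\langle MY,\,W-\tfrac{W_5}{V_5}V\right\rangle
=\left\langle Y,\binom{\vec N}{0}\right\rangle-\tfrac{W_5}{V_5}\left\langle MY,Mp(X)\right\rangle
=1,
\]
valid for every $M\in SO(4,1)$ at once. Your continuity argument on the identity component $SO^+(4,1)$ is sound (the conformal Gauss map depends smoothly on the immersion, and $\varepsilon=\langle Y_\varphi,MY\rangle\in\{\pm1\}$ is continuous in $M$), but your handling of the second component is a genuine gap. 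You assert that ``because $M$ has determinant $+1$, the orientation of the oriented tangent sphere is preserved under the geometric action of $\varphi$'', yet $\det M=+1$ says nothing a priori about how $\varphi$ acts on the \emph{orientation} of spheres; indeed the inversion $M_\iota=\mathrm{diag}(-Id_3,1,-1)$ lies outside $SO^+(4,1)$ and corresponds to an orientation-reversing map of $\R^3$, so the compatibility of orientations is not automatic. You say ``one checks the sign directly on an explicit representative'' but do not perform the check, and the reason you offer in lieu of it is not a proof. Either carry out that check on $M_\iota$ explicitly, or replace the whole sign discussion by the paper's two-line computation above, which makes the connectedness detour unnecessary.
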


\begin{proof}
We work in a conformal chart on a disk.
Thanks to theorem \ref{Yseulecongruenceenveloppante} one just needs to prove that $M Y$ is conformal, envelopes $\varphi \circ X$ and has the same orientation as  $Y_\varphi$.

We first show that $MY$ is conformal.
Since $ \left( M Y \right)_z = M Y_z$ and $ M \in SO(4,1)$,
 $$ \begin{aligned}
  \langle \left( M Y\right)_z, \left( M Y\right)_z \rangle &= \langle Y_z, Y_z \rangle.
\end{aligned}$$
Given that $Y$ is conformal, one finds $ \langle\left( M Y \right)_z, \left( M Y \right)_z \rangle=0$, that is $MY$ is conformal.
We then justify that $MY$ envelopes $\varphi \circ X$.  To that aim we denote $V = M \begin{pmatrix} X \\ 1 \end{pmatrix} = \begin{pmatrix}  V_\circ \\ V_5 \end{pmatrix}$.
In accordance with corollary \ref{actionr3s3}, $\varphi( X) = \frac{V_\circ}{V_5 }$, which translates to 

\begin{equation}
\label{MchangeX} p(\varphi (X)) = \frac{1}{ V_5 } M p(X).
\end{equation}
Then 
$$ \begin{aligned} \left\langle M Y, p(\varphi(X)) \right\rangle &=   \frac{1}{ V_5 } \left\langle MY ,  Mp(X) \right\rangle \\
&=\frac{1}{ V_5 } \left\langle Y ,  p(X) \right\rangle \\
&= 0,
\end{aligned}$$
which proves  (\ref{enveloppe1X}), 
and 
$$ \begin{aligned} \left\langle M Y, \nabla p(\varphi(X)) \right\rangle &=  \nabla \left( \left\langle MY , p(\varphi (X)) \right\rangle \right) - \left\langle M \nabla Y,  p(\varphi(X)) \right\rangle_{4,1}  \\
&=- \frac{1}{V_5 } \left\langle M \nabla Y ,  M p(X) \right\rangle \\
&= - \frac{1}{V_5 } \left\langle  \nabla Y ,   p(X) \right\rangle \\
&= 0,
\end{aligned}$$
which shows  (\ref{enveloppe2X}) and that $ MY$ envelopes $\varphi(X)$.

Finally one  need only adress the orientation of $\varphi(X)$ to conclude. Let $N_\varphi$ be the Gauss map of $\varphi\circ X$ induced by the Gauss map $N$ of $X$, namely $N_\varphi = \frac{d\varphi (N)}{|d\varphi(N)|}$. Given the expression (\ref{projectionstereo6}) of the conformal Gauss map, $MY= Y_\varphi$ if and only if $\left\langle MY , \begin{pmatrix} N_\varphi \\ 0 \end{pmatrix} \right\rangle= 1$, $MY = - Y_\varphi$ otherwise. Let $W = M \begin{pmatrix} N \\ 0 \end{pmatrix}= \begin{pmatrix} W_\circ \\ W_5 \end{pmatrix}$.
With a straightforward computation one finds $$ d\varphi (N) = \frac{W_\circ}{V_5} - \frac{W_5}{V_5^2} V_\circ ,$$
which yields
$$ N_\varphi = W_\circ - \frac{W_5}{V_5} V_\circ.$$
Then $$\begin{aligned} \left\langle MY, \begin{pmatrix}N_\varphi \\ 0 \end{pmatrix} \right\rangle &= \left\langle MY , \begin{pmatrix}  W_\circ - \frac{W_5}{V_5} V_\circ \\ 0 \end{pmatrix} \right\rangle  \\
&=  \left\langle MY , \begin{pmatrix}  W_\circ \\W_5 \end{pmatrix}  -\begin{pmatrix} \frac{W_5}{V_5} V_\circ \\  W_5 \end{pmatrix} \right\rangle
 \\
&=  \left\langle MY ,M \begin{pmatrix}  N \\0 \end{pmatrix} - \frac{W_5}{V_5}\begin{pmatrix}  V_\circ \\  V_5 \end{pmatrix} \right\rangle \end{aligned}$$ 
thanks to the definition of $W$. Then due to $ \left\langle MY,M \begin{pmatrix}  N \\0 \end{pmatrix} \right\rangle = \left\langle Y, \begin{pmatrix} N \\ 0 \end{pmatrix} \right\rangle = 1$ one finds 

$$\begin{aligned} \left\langle MY, \begin{pmatrix}N_\varphi \\ 0 \end{pmatrix} \right\rangle &= 1 -\left\langle MY,  \frac{W_5}{V_5}\begin{pmatrix}  V_\circ \\  V_5 \end{pmatrix} \right\rangle \\
&= 1-\frac{W_5}{V_5} \left\langle MY, p(X) \right\rangle, \end{aligned}$$ 
by definition of  $V$. The equality $ \left\langle MY, Mp(X) \right\rangle = 0$  gives the expected result. 

Then $MY = Y_\varphi$ which is the desired result.

\end{proof}

One has similar results in the $\R^3$ and $\Hy^3$ settings.

\begin{prop}
Let $\varphi \in \mathrm{Conf}( \R^3 \cup \{ \infty \})$ corresponding to $M \in SO(4,1)$. 
Let $\Phi \in C^\infty (\Sigma, \R^3 )$ be a smooth immersion and $Y$ its conformal Gauss map. We assume the  set of umbilic points of $\Phi$ to be nowhere dense.
Let $Y_\varphi$ be the conformal Gauss map of $\varphi \circ \Phi$.
Then $$Y_\varphi = M Y.$$
\end{prop}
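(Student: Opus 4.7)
The plan is to mirror the argument just given for the $\s^3$ case, with the $\R^3$ representation $p(\Phi)=\bigl(\Phi,\tfrac{|\Phi|^2-1}{2},\tfrac{|\Phi|^2+1}{2}\bigr)^T$ replacing $p(X)$. By Theorem \ref{Yseulecongruenceenveloppante}, the only smooth conformal spherical congruences enveloping $\varphi\circ\Phi$ are $\pm Y_\varphi$, so it suffices to verify that $MY$ (i) is conformal, (ii) envelopes $\varphi\circ\Phi$ in the sense of (\ref{enveloppe1phi}) and (\ref{enveloppe2phi}), and (iii) has the same orientation as $Y_\varphi$.

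First I would check conformality: since $M\in SO(4,1)$ preserves the Lorentzian product, $\langle (MY)_z,(MY)_z\rangle=\langle Y_z,Y_z\rangle=0$ by conformality of $Y$ established in example \ref{ouonintroduitlaGaussconforme}. Next, for the enveloping conditions, I would derive the $\R^3$-analogue of (\ref{MchangeX}): setting $y=Mp(\Phi)=\bigl(y_\diamond,y_4,y_5\bigr)^T$, corollary \ref{actionr3s3} gives $\varphi(\Phi)=y_\diamond/(y_5-y_4)$, and a direct computation using that $p(\Phi)$ is lightlike and isotropic in the last two coordinates yields
\begin{equation*}
p(\varphi(\Phi))=\frac{1}{y_5-y_4}\,Mp(\Phi).
\end{equation*}
Then $\langle MY,p(\varphi(\Phi))\rangle=\tfrac{1}{y_5-y_4}\langle Y,p(\Phi)\rangle=0$, and differentiating and using $\langle MY,M\nabla p(\Phi)\rangle=\langle Y,\nabla p(\Phi)\rangle=0$ gives (\ref{enveloppe2phi}).

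The main obstacle, just as in the proof of the preceding proposition, is step (iii): verifying the orientation. I would compute the Gauss map of $\varphi\circ\Phi$ induced by $\n$, namely $\n_\varphi=d\varphi(\n)/|d\varphi(\n)|$, and then evaluate $\langle MY,(\n_\varphi,\langle\n_\varphi,\varphi(\Phi)\rangle,\langle\n_\varphi,\varphi(\Phi)\rangle)^T\rangle$, which by (\ref{Yphi}) must equal $1$ to force $MY=Y_\varphi$ rather than $-Y_\varphi$. Writing $W=M(\n,\langle\n,\Phi\rangle,\langle\n,\Phi\rangle)^T=(W_\diamond,W_4,W_5)^T$, one computes $d\varphi(\n)$ by differentiating the formula from corollary \ref{actionr3s3} along $\n$, and expresses the vector $(\n_\varphi,\langle\n_\varphi,\varphi(\Phi)\rangle,\langle\n_\varphi,\varphi(\Phi)\rangle)^T$ as a linear combination of $W$ and $Mp(\Phi)$. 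Using $\langle MY,W\rangle=\langle Y,(\n,\langle\n,\Phi\rangle,\langle\n,\Phi\rangle)^T\rangle=1$ (from (\ref{Yphi})) together with $\langle MY,Mp(\Phi)\rangle=0$ just shown, the coefficient of $W$ contributes exactly $1$ and the coefficient of $Mp(\Phi)$ drops out. This gives $MY=Y_\varphi$ and concludes the proof. The bookkeeping for this last step is where care is required, as the $\R^3$ representation has two ``extra'' coordinates instead of one, but the structure of the cancellation is identical to the $\s^3$ computation.
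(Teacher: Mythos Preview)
Your proposal is correct and follows exactly the approach the paper intends: the paper does not give a separate proof for this $\R^3$ proposition but simply states that the argument of Proposition~\ref{modificationYtransformationconforme} carries over, and your write-up does precisely this adaptation. Your key identity $p(\varphi(\Phi))=\tfrac{1}{y_5-y_4}\,Mp(\Phi)$ is the direct $\R^3$ analogue of (\ref{MchangeX}), and the orientation check via $W=M(\n,\langle\n,\Phi\rangle,\langle\n,\Phi\rangle)^T$ parallels the $\s^3$ computation exactly, with the vector $(\n_\varphi,\langle\n_\varphi,\varphi(\Phi)\rangle,\langle\n_\varphi,\varphi(\Phi)\rangle)^T$ indeed equal to $W-\tfrac{W_5-W_4}{y_5-y_4}\,Mp(\Phi)$.
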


\begin{prop}
Let $\varphi \in \mathrm{Conf}( \Hy^3)$ corresponding to $M \in SO(4,1)$. 
Let $Z \in C^\infty (\Sigma , \Hy^3)$ be a smooth conformal immersion and $Y$ its conformal Gauss map. We assume the  set of umbilic points of $Z$ to be nowhere dense.
Let $Y_\varphi$ be the conformal Gauss map of $\varphi \circ Z$.
Then $$Y_\varphi = M Y.$$
\end{prop}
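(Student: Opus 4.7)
The plan is to replicate the strategy used for the $\s^3$ version (Proposition \ref{modificationYtransformationconforme}) in the hyperbolic setting, exploiting the fact that by Theorem \ref{Yseulecongruenceenveloppante} it suffices to exhibit $MY$ as a conformal spherical congruence enveloping $\varphi \circ Z$ with the correct orientation; then $MY = \pm Y_\varphi$, and an orientation check fixes the sign. Working in a conformal chart, I would proceed in three steps.

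First, I would verify that $MY$ is conformal. Since $M \in SO(4,1)$ preserves the Lorentzian product,
$$\langle (MY)_z, (MY)_z \rangle = \langle Y_z, Y_z \rangle = 0,$$
so conformality transfers from $Y$ to $MY$.

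Second, I would show $MY$ envelopes $\varphi \circ Z$. Here the key ingredient is the hyperbolic analog of identity (\ref{MchangeX}). Since $p(Z) = (Z_h, -1, Z_4)$ is lightlike with its fourth coordinate fixed to $-1$, and since $\varphi(Z)$ is characterized by being the unique point of $\Hy^3$ whose $p$-image is proportional to $Mp(Z)$, writing $Mp(Z) = (y_h, y_4, y_5)$ one obtains
$$p(\varphi \circ Z) = -\frac{1}{y_4}\, M p(Z).$$
Using this, the same one-line manipulations as in the $\s^3$ proof give
$$\langle MY, p(\varphi \circ Z) \rangle = -\frac{1}{y_4}\langle Y, p(Z) \rangle = 0,$$
and differentiating (and invoking $\langle M \nabla Y, Mp(Z) \rangle = \langle \nabla Y, p(Z) \rangle = 0$) yields $\langle MY, \nabla p(\varphi \circ Z) \rangle = 0$, which are precisely (\ref{enveloppe1Z}) and (\ref{enveloppe2Z}).

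Third, I would pin down the orientation. By Proposition \ref{lapropositiondelexpressiondeY}, $Y_\varphi$ is the unique unit spacelike enveloping congruence satisfying
$$\Bigl\langle Y_\varphi, \bigl(\vec n^Z_{\varphi,h}, 0, \vec n^Z_{\varphi,4}\bigr) \Bigr\rangle = 1,$$
so it suffices to show that $\langle MY, (\vec n^Z_{\varphi,h}, 0, \vec n^Z_{\varphi,4}) \rangle = 1$. Setting $W = M(\vec n^Z_h, 0, \vec n^Z_4)$, one must express $(\vec n^Z_{\varphi,h}, 0, \vec n^Z_{\varphi,4})$ as $W$ corrected by a multiple of $p(\varphi(Z))$ chosen to restore a vanishing fourth component, in perfect analogy with the computation $N_\varphi = W_\circ - (W_5/V_5)V_\circ$ in the $\s^3$ proof; the required identity then reduces to $\langle Y, (\vec n^Z_h, 0, \vec n^Z_4) \rangle = 1$ combined with $\langle MY, Mp(Z) \rangle = 0$. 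Conclude via Theorem \ref{Yseulecongruenceenveloppante}.

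The main obstacle is the orientation step: one must carefully track how the spacelike normal $\vec n^Z$ of $Z$, regarded inside $\R^{4,1}$ through the padding $(\vec n^Z_h, 0, \vec n^Z_4)$, is affected both by $M$ and by the rescaling factor $-1/y_4$ that realises the action of $\varphi$ on $\Hy^3$. This is essentially identical to the final computation in Proposition \ref{modificationYtransformationconforme}, with the adjustment that the normalization slot is the fourth coordinate (taking the role of $V_5$ in the sphere case), which only changes signs in a bookkeeping manner and does not introduce new conceptual difficulty.
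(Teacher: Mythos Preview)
Your proposal is correct and follows essentially the same approach as the paper, which simply states that the $\Hy^3$ result is obtained analogously to the $\s^3$ case (Proposition \ref{modificationYtransformationconforme}) without giving a separate proof. Your identification of the normalization $p(\varphi\circ Z) = -\tfrac{1}{y_4} M p(Z)$ and the orientation check via $\langle Y, (\vec n^Z_h, 0, \vec n^Z_4)\rangle = 1$ are exactly the hyperbolic analogs needed to carry the argument through.
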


Then, since any $M \in SO(4,1)$ conserves  hyperplanes in $\R^{4,1}$ and the type of vectors we deduce  the following theorem.

\begin{theo}
\label{conformementCMC}
Let $\Phi \, : \, \D \rightarrow \R^3$ be a smooth conformal immersion, and $X$ (respectively $Z$) its representation in $\s^3$ (respectively $\mathbb{H}^3$) through $\pi$ (respectively $\tilde \pi$).  Let $Y$ be its conformal Gauss map. We assume the  set of umbilic points of $\Phi$ (or equivalently, see (\ref{projectionstereo5}) and (\ref{projectionhyper5}), $X$ or $Z$)  to be nowhere dense.

We say that $\Phi$ (respectively $X$, $Z$) is conformally CMC (respectively minimal) if and only if there exists a conformal diffeomorphism $\varphi$ of $\R^3 \cup \{ \infty \}$ (respectively $\s^3$, $\Hy^3$) such that $\varphi \circ \Phi$ (respectively $\varphi \circ X$, $\varphi \circ Z$) has constant mean curvature (respectively is minimal) in $\R^3$ (respectively $\s^3$, $\Hy^3$).

Then 

\begin{itemize}
\item
$\Phi$ is conformally CMC (respectively minimal) in $\R^3$ if and only if $Y$ lies in an affine (respectively linear) hyperplane of $\R^{4,1}$ with lightlike normal.
\item 
$X$  is conformally CMC (respectively minimal) in $\s^3$ if and only if $Y$ lies in an affine (respectively linear) hyperplane of $\R^{4,1}$ with timelike normal.
\item
$Z$ is conformally CMC (respectively minimal) in $\Hy^3$ if and only if $Y$ lies in an affine (respectively linear) hyperplane of $\R^{4,1}$ with spacelike normal.
\end{itemize}
\end{theo}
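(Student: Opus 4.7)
The plan is to reduce the statement to the ``canonical'' case in which the hyperplane normal is one of $v_l$, $v_t$, $v_s$ (already introduced in the paper), using Proposition \ref{modificationYtransformationconforme} together with the isometric action of $SO(4,1)$ on $\R^{4,1}$.

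First, I would set up the canonical case directly from \eqref{HenfonctiondeY}. Writing the Lorentzian product as $\langle v,w \rangle = \sum_{i=1}^{4} v_i w_i - v_5 w_5$, one reads off
\[ \langle Y, v_l \rangle = Y_4 - Y_5 = -H,\qquad \langle Y, v_t \rangle = -Y_5 = -h, \qquad \langle Y, v_s \rangle = Y_4 = -H^Z, \]
with $\langle v_l,v_l\rangle =0$, $\langle v_t,v_t\rangle =-1$, $\langle v_s,v_s\rangle =1$. Thus $\Phi$ is CMC (resp.\ minimal) in $\R^3$ iff $Y$ lies in an affine (resp.\ linear) hyperplane whose Lorentzian normal is $v_l$, and similarly for $X$ with $v_t$ and for $Z$ with $v_s$. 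This gives the theorem when the hyperplane normal is in the canonical position.

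To treat arbitrary normals of the prescribed causal type, I would invoke transitivity: $SO(4,1)$ acts transitively on lightlike directions, on timelike directions, and on spacelike directions of $\R^{4,1}$. For lightlike directions this is already established in the paper, since null directions correspond bijectively to points of $\R^3\cup\{\infty\}\simeq \s^3$ and $\mathrm{Conf}(\s^3)\simeq SO(4,1)$ acts transitively on $\s^3$ (Corollary \ref{actionr3s3}); for timelike and spacelike directions, it is the standard fact that the indefinite orthogonal group acts transitively on vectors of fixed Lorentzian norm (any residual sign ambiguity is absorbed either into the scalar $\lambda$ below or into the $\pm Y$ freedom of Theorem \ref{Yseulecongruenceenveloppante}).

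With these two ingredients the two directions of the proof are formal. For the forward direction, if $\Phi$ is conformally CMC in $\R^3$ via $\varphi\leftrightarrow M\in SO(4,1)$, Proposition \ref{modificationYtransformationconforme} says the conformal Gauss map of $\varphi\circ\Phi$ is $MY$; by the canonical case, $\langle MY,v_l\rangle = c$ for some constant $c$, and since $M\in SO(4,1)$ this reads $\langle Y, M^{-1}v_l\rangle = c$ where $M^{-1}v_l$ is again lightlike. Minimality corresponds to $c=0$. The $\s^3$ and $\Hy^3$ cases are identical, using $v_t$ and $v_s$. For the converse, if $\langle Y,n\rangle = c$ with $n$ lightlike, pick $M\in SO(4,1)$ with $Mn = \lambda v_l$; then $\langle MY, v_l\rangle = \lambda^{-1}\langle MY,Mn\rangle = \lambda^{-1} c$, and by the canonical case and Proposition \ref{modificationYtransformationconforme} the conformal image $\varphi\circ\Phi$ is CMC (minimal if $c=0$) in $\R^3$; analogously in $\s^3$ and $\Hy^3$. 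The one place where care is needed is the transitivity claim for timelike and spacelike directions, which is not proved elsewhere in the paper but is classical, and can otherwise be verified by hand using the explicit generators \eqref{reprdilations}--\eqref{reprtranslations} of $SO(4,1)$.
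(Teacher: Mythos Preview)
Your proposal is correct and follows essentially the same route as the paper: the paper first establishes the ``canonical'' case from \eqref{HenfonctiondeY} (the discussion immediately preceding Proposition \ref{modificationYtransformationconforme}) and then deduces the theorem in one sentence, observing that any $M\in SO(4,1)$ preserves hyperplanes of $\R^{4,1}$ together with the causal type of their normals, and that $Y_\varphi = MY$. You spell out both implications and make the transitivity of $SO(4,1)$ on each causal type explicit, which the paper leaves implicit; otherwise the arguments coincide.
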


\subsection{Geometry of conformal Gauss maps }
\label{geometryofconformalGaussmaps}

Enveloping conditions (\ref{enveloppe1phi}) and (\ref{enveloppe2phi}) (or equivalently (\ref{enveloppe1X}) and (\ref{enveloppe2X}) or (\ref{enveloppe1Z}) and (\ref{enveloppe2Z})) ensure that $p(\Phi)$ (or equivalently   $p(X)$ or  $p(Z)$)  is an isotropic vector field normal to $Y$ in $\R^{4,1}$. 

We wish to complete $\left( Y , Y_z, Y_{\zb}, p(\Phi) \right)$ into a moving frame of $\R^{4,1}$  compatible with the decomposition $\R^{4,1} = TY \bigoplus NY$, in order to introduce the mean and tracefree curvatures of $Y$ as an immersion in $\R^{4,1}$.
As we pointed out prior, finding another immersion enveloped  by $Y$ is enough to complete the moving frame.

\begin{theo}
\label{dualsurfacephi}
Let $\Phi \, : \, \D \rightarrow \R^3$ be a smooth conformal immersion with no umbilic points. 
Then there exists $$\Phi^* = \Phi - \frac{4 H_z \overline{\Omega} e^{-2\lambda} }{ T(\Phi)}\Phi_z- \frac{4 H_{\zb} \Omega e^{-2\lambda} }{T(\Phi)} \Phi_{\zb} +  \frac{2H \left| \Omega \right|^2 e^{-2 \lambda} }{T(\Phi)} \n$$
where $T(\Phi) =  \left| \nabla H \right|^2 + H^2 \left| \Omega \right|^2 e^{-2\lambda}$, such that
\begin{equation} \label{hypphistar1} \left\langle Y ,p(\Phi^*) \right\rangle= 0\end{equation}
and
\begin{equation} \label{hypphistar2} \left\langle \nabla Y ,p(\Phi^*) \right\rangle = 0.\end{equation}
\end{theo}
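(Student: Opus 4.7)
The plan is to determine $\Phi^*$ by a concrete ansatz. Since $\Phi$ is an immersion with no umbilic point, $(\Phi_z, \Phi_{\zb}, \n)$ is pointwise a basis of $\C \otimes \R^3$, so I write
$$V := \Phi^* - \Phi = A\,\Phi_z + \bar A\,\Phi_{\zb} + C\,\n, \qquad A \in \C,\ C \in \R,$$
and translate the two hypotheses (\ref{hypphistar1})--(\ref{hypphistar2}) into algebraic equations on $A$ and $C$. Geometrically this is natural: (\ref{lametriqueduY}) shows that $Y$ is a conformal immersion away from umbilic points, so the $3$-plane $E := \mathrm{Vect}(Y, Y_x, Y_y) \subset \R^{4,1}$ is spacelike (signature $(3,0)$) and $E^\perp$ is a Lorentzian $2$-plane containing exactly two null directions; one is $\R\,p(\Phi)$, and the other must produce $\R\,p(\Phi^*)$.

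A direct calculation from the definition of $p$ yields
$$\langle p(\Phi), p(\Phi^*)\rangle = -\tfrac{1}{2}|V|^2, \qquad \langle \partial_w p(\Phi), p(\Phi^*)\rangle = \langle \Phi_w, V\rangle \quad (w \in \{z, \zb\}).$$
Using the expression (\ref{Yphi}) of $Y$, the condition $\langle Y, p(\Phi^*)\rangle = 0$ then reduces to the tangent-sphere relation $C = \tfrac{H}{2}|V|^2$. In complex coordinates, differentiating $\langle \n, \Phi_z\rangle = 0$ and $\langle \n, \Phi_{\zb}\rangle = 0$ gives the Weingarten formula $\n_z = -H\,\Phi_z - \Omega\,e^{-2\lambda}\Phi_{\zb}$, and substituting this into (\ref{lenabladuY}) produces
$$Y_z = H_z\,p(\Phi) - \Omega\,e^{-2\lambda}\,\partial_{\zb} p(\Phi).$$
Consequently, the condition $\langle Y_z, p(\Phi^*)\rangle = 0$ reads $A = -(H_z/\Omega)|V|^2$ (the complex conjugate of this identity is the condition coming from $\langle Y_{\zb}, p(\Phi^*)\rangle = 0$, hence automatically satisfied).

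The last step closes the system by self-consistency. Expanding in the basis gives $|V|^2 = |A|^2 e^{2\lambda} + C^2$; substituting the expressions for $A$ and $C$ produces a \emph{linear} equation in $|V|^2$, whose unique positive solution is
$$|V|^2 = \frac{4|\Omega|^2 e^{-2\lambda}}{T(\Phi)}.$$
Feeding this back into $V = A\Phi_z + \bar A\Phi_{\zb} + C\n$ and using $|\Omega|^2/\Omega = \bar\Omega$ reproduces exactly the formula stated in the theorem. The main obstacle is book-keeping: correctly handling (\ref{lenabladuY}) in complex coordinates and managing the numerous factors of $e^{\pm 2\lambda}$. One also checks that $T(\Phi) > 0$ at non-umbilic points provided $H \not\equiv 0$ (the minimal case $H \equiv 0$ corresponds to $\Phi^*$ being at infinity, in keeping with the Moebius duality of minimal immersions).
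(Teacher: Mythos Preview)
Your proof is correct and follows essentially the same approach as the paper: the same ansatz $\Phi^* = \Phi + A\Phi_z + \bar A\Phi_{\zb} + C\n$, the same two scalar equations $C = \tfrac{H}{2}|V|^2$ and $\Omega A = -H_z|V|^2$ from (\ref{hypphistar1}) and (\ref{hypphistar2}), and the same closure via $|V|^2 = |A|^2 e^{2\lambda} + C^2$. The paper's version is terser (it simply says ``solving the resulting system''), whereas you spell out the identities $\langle p(\Phi), p(\Phi^*)\rangle = -\tfrac12|V|^2$ and $\langle \partial_w p(\Phi), p(\Phi^*)\rangle = \langle \Phi_w, V\rangle$ and add the geometric picture of the two null lines in $E^\perp$; but the argument is the same.

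One small wording quibble: the self-consistency relation $|V|^2 = c\,|V|^4$ is quadratic, not linear, though after discarding the trivial root $|V|^2 = 0$ it of course becomes the linear identity you state. Also your parenthetical about $T(\Phi) > 0$ ``provided $H \not\equiv 0$'' is slightly imprecise, since $T(\Phi)$ can still vanish at an isolated point where $H$ and $\nabla H$ both vanish; but the paper does not address this point either, and it does not affect the main computation.
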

\begin{proof}
We search for $\Phi^*$ under the form 
$$\Phi^* = \Phi + u \Phi_z + \overline{u} \Phi_{\zb} + v \n.$$
Applying first (\ref{hypphistar1}) then (\ref{hypphistar2})  yields
$$ \begin{aligned}v &= \frac{|u|^2 e^{2 \lambda} + v^2}{2} H \\
\Omega u &= - H_z \left( |u|^2 e^{2 \lambda} + v^2 \right).
\end{aligned}$$
Solving the resulting system gives us the desired values for $u$ and $v$.
\end{proof}

One can work similarly with immersions in $\s^3$.

\begin{theo}
\label{dualsurfaceX}
Let $X \, : \, \D \rightarrow \s^3$ be a smooth conformal immersion with no umbilic points. 
Then there exists $$X^* =\frac{ h^2 \left| \omega \right|^2 +4  \left| h_z \right|^2 e^{2 \Lambda} - \left| \omega\right|^2}{ T(X) } X - \frac{4 h_z \overline{\omega} }{T(X)}  X_z - \frac{4 h_{\zb} \omega}{ T(X) }  X_{\zb} + \frac{ 2 \left| \omega \right|^2 h}{T(X) } N$$
where $T(X) = { \left|\omega \right|^2 \left( 1+ h^2 \right) + 4 \left| h_z \right|^2 e^{2 \Lambda} } $, such that 
\begin{equation} \label{hypXstar1} \left\langle Y ,p(X^*) \right\rangle = 0\end{equation}
and
\begin{equation} \label{hypXstar2} \left\langle \nabla Y ,p(X^*) \right\rangle = 0.\end{equation}
\end{theo}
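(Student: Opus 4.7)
The plan is to mirror the proof of Theorem \ref{dualsurfacephi} by searching for $X^*$ in the moving frame $(X, X_z, X_{\zb}, \vec{N})$ of $\R^4$ along $X$. I would write
$$X^* = a X + u X_z + \bar u X_{\zb} + v \vec{N},$$
with $a, v \in \R$ and $u \in \C$, and impose the three conditions (\ref{hypXstar1}), (\ref{hypXstar2}), together with the requirement $|X^*|^2 = 1$ that $X^*$ lies on $\s^3$. Using conformality and the orthogonality of the frame (with $\langle X_z, X_{\zb}\rangle = e^{2\Lambda}/2$), the normalization reads $a^2 + |u|^2 e^{2\Lambda} + v^2 = 1$.

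I would then translate the enveloping conditions via the expression
$$Y = h \begin{pmatrix} X \\ 1 \end{pmatrix} + \begin{pmatrix} \vec{N} \\ 0 \end{pmatrix}$$
from Proposition \ref{lapropositiondelexpressiondeY}. Condition (\ref{hypXstar1}) collapses at once to the linear relation $h(a-1) + v = 0$, hence $v = h(1-a)$. For condition (\ref{hypXstar2}) I would pre-compute the Weingarten-type identity
$$\vec{N}_z = - h X_z - \omega e^{-2\Lambda} X_{\zb},$$
obtained by expanding $\vec{N}_z$ in the frame $(X, X_z, X_{\zb}, \vec{N})$ and differentiating the relations $\langle \vec{N}, X \rangle = \langle \vec{N}, X_z \rangle = 0$ and $|\vec{N}|^2 = 1$, combined with the definitions of $h$ and $\omega$. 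Substituting into $\langle Y_z, p(X^*) \rangle = 0$, a clean cancellation of the $h \bar u e^{2\Lambda}$ terms leaves $h_z(a-1) - \omega u / 2 = 0$, i.e., $u = 2 h_z(a-1)/\omega$.

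Inserting these expressions for $u$ and $v$ into the normalization $a^2 + |u|^2 e^{2\Lambda} + v^2 = 1$ produces a quadratic in $a$ with the trivial root $a = 1$ (yielding $X^* = X$) and the non-trivial root
$$a = \frac{h^2 |\omega|^2 + 4 |h_z|^2 e^{2\Lambda} - |\omega|^2}{T(X)},$$
from which back-substitution gives exactly the coefficients in the statement. The main subtlety lies in establishing the Weingarten relation in $\s^3$: unlike the flat case, one must verify that the $X$-component of $\vec{N}_z$ vanishes, which follows cleanly from $\langle \vec{N}_z, X \rangle = -\langle \vec{N}, X_z \rangle = 0$. The rest is bookkeeping with the orthogonal frame and the normalization constraint.
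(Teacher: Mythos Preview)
Your proposal is correct and follows essentially the same approach as the paper: write $X^*$ in the frame $(X, X_z, X_{\zb}, \vec{N})$, extract $v = h(1-a)$ and $\omega u = 2h_z(a-1)$ from the two enveloping conditions, then use $|X^*|^2 = 1$ to solve for $a$. The Weingarten identity you derive is exactly (\ref{NzX}) from the appendix, so your computation of $Y_z$ coincides with (\ref{YzX}); the only difference is that you spell out the intermediate cancellation and the factoring of the quadratic in $a$, which the paper leaves implicit.
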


\begin{proof}
We search for $X^*$ under the form 
$$ X^* = \alpha X + \beta X_z + \beta X_{\zb} + \gamma N.$$
Applying first (\ref{hypXstar1}), then (\ref{hypXstar2})  yields 
$$ \begin{aligned}&\gamma = (1- \alpha) h,\\
&2h_z \left( \alpha-1 \right) = \omega \beta. \end{aligned}$$
Further $\left\langle X^*, X^* \right\rangle = 1$ ensures 
$$ \alpha^2 + \left| \beta \right|^2 e^{2\Lambda} + \gamma^2 = 1.$$
Solving the resulting system gives the desired result.
\end{proof}


Let $e_{\Phi} :=\left( Y , Y_z, Y_{\zb}, p(\Phi),  p(\Phi^*) \right)$ and  $e_X:=\left( Y , Y_z, Y_{\zb}, p(X), p(X^*) \right)$ denote our two frames. Since $ p(\Phi)$ and $p(X)$ are colinear, necessarily $p(\Phi^*)$ and  $p(X^*)$ are too, meaning $X^* = \pi^{-1} \circ \Phi^*$, that is $X^*$ is the representation of $\Phi^*$ in $\s^3$.

Since $Y$ conformal, (\ref{hypphistar1}) and (\ref{hypphistar2}) (respectively (\ref{hypXstar1}) and (\ref{hypXstar2})), (\ref{enveloppe1phi}) and (\ref{enveloppe2phi})  (respectively (\ref{enveloppe1X}) and (\ref{enveloppe2X})) $e_\Phi$ (respectively $e_X$) is orthogonal.  For convenience's sake we will mainly work with $e_X$. Indeed while $\Phi$ is not necessarily contained in a compact, and thus neither is  $p(\Phi)$, $X \in \s^3$ makes for easier computations.  Each result has its counterpart in $\R^3$.

Let \begin{equation} \label{onfixenu} \nu=p(X)= \begin{pmatrix} X \\1 \end{pmatrix} ,\end{equation} $$l = \left\langle p(X), p(X^*) \right\rangle = \frac{-2 |\omega|^2}{|\omega|^2(h^2+1)+ \left|\nabla h \right|^2 e^{2\Lambda}}, $$ and
 \begin{equation} \label{onfixenustar} \begin{aligned} \nu^* &= -\frac{1}{l} p(X^*) \\
&= \frac{\left| \omega \right|^2 \left( h^2 +1 \right) + \left| \nabla h \right|^2 e^{2\Lambda} }{2 \left| \omega \right|^2} p(X^*) \\
&=   \begin{pmatrix} \left( \frac{h^2 -1}{2} + \frac{ \left| \nabla h \right|^2e^{2\Lambda}}{2 \left| \omega \right|^2} \right)  X  - \frac{2 h_z}{ \omega}   X_z - \frac{2 h_{\zb} }{ \overline{ \omega} }  X_{\zb} + h  N \\   \frac{h^2 +1}{2} + \frac{ \left| \nabla h \right|^2e^{2\Lambda}}{2 \left| \omega \right|^2}  \end{pmatrix}.
\end{aligned}\end{equation}
By design we have $\langle \nu, \nu^* \rangle =-1$.  Thus defined $ \left| \nu^*\right| < \infty$ away from umbilic points.

One computes easily, with Gauss-Codazzi (see (\ref{Gauss-Codazzis3}) in appendix) to obtain the second equality, 
\begin{equation}
\label{nuznustarbase}
\left\langle \nu_z  , \nu^* \right\rangle = -\frac{h_{\zb}e^{2\Lambda} }{ \overline{\omega} } = - \frac{ \overline{\omega}_z }{\overline{\omega} }.
\end{equation}
Using computations done in  (\ref{YzzbX}), one finds 
\begin{equation}
\label{Yzzbbase}
\begin{aligned}
Y_{z \zb} &=  \mathcal{W}_{\s^3}( X) \begin{pmatrix}X \\1 \end{pmatrix} - \frac{\left| \omega \right|^2 e^{-2\Lambda} }{2} Y = \mathcal{W}_{\s^3}( X) \nu - \frac{\left| \omega \right|^2 e^{-2\Lambda} }{2} Y
\end{aligned}
\end{equation}
where
$$\mathcal{W}_{\s^3}( X) = h_{z \zb} + \frac{ \left| \omega \right|^2 e^{-2\Lambda} }{2} h \in \R$$ as defined in (\ref{wtorduX}).
With the notations of section  \ref{formulas41}, see  (\ref{Yzzb}), this yields
\begin{equation}
\label{Hnubase}
H_\nu = 0,
\end{equation}
\begin{equation}
\label{e2ltordu}
e^{2 \mathcal{L}} =\left| \omega \right|^2 e^{-2\Lambda},
\end{equation}
and 
\begin{equation}
\label{Hnustarbase}
H_{\nu^*} = \frac{-2 \mathcal{W}_{\s^3} (X)}{ \left|\omega \right|^2 e^{-2 \Lambda} }.
\end{equation}

Similarly, applying (\ref{defomeganu}) to (\ref{YzzPhi}) we find 

\begin{equation}
\label{omeganubase}
\Omega_\nu = 2 \left\langle Y_{zz} , \nu \right\rangle = \omega,
\end{equation}
and
$$
\begin{aligned}
\Omega_{\nu^*} &= 2 \left\langle Y_{zz} , \nu^* \right\rangle \\
&=2 \left\langle  h_{zz} \begin{pmatrix} X \\ 1 \end{pmatrix} + h_z \begin{pmatrix} X_{z} \\ 0 \end{pmatrix} - \left( \omega e^{-2 \Lambda} \right)_z \begin{pmatrix}  X_{\zb} \\ 0 \end{pmatrix} - \omega \left(  \frac{h}{2} \begin{pmatrix}  \vec{N} \\ 0 \end{pmatrix} - \frac{1}{2} \begin{pmatrix} X \\ 0 \end{pmatrix} \right) , 
\nu^* \right\rangle \\
&=2 \left( h_{zz} + \frac{\omega}{2} \right) \left( \frac{h^2 -1}{2} + \frac{ \left| \nabla h \right|^2e^{2\Lambda}}{2 \left| \omega \right|^2} \right)  +  \frac{ h_z}{ \omega}  \left( \omega e^{-2\Lambda} \right)_z e^{2 \Lambda} - \frac{h_z h_{\zb} }{\overline{\omega} } e^{2\Lambda} - \frac{\omega h^2}{2} \\& - h_{zz} \left( \frac{h^2 +1}{2} + \frac{ \left| \nabla h \right|^2e^{2\Lambda}}{2 \left| \omega \right|^2} \right) \\
&= -\omega \frac{h^2 +1}{2} + 2\frac{ \left| \omega_{\zb} \right|^2 e^{-2\Lambda} }{ \overline{\omega}}+ 2\frac{ \omega_{\zb} \left( \omega e^{-2\Lambda} \right)_z }{\omega} -2\frac{ \omega_{\zb} \overline{\omega}_z e^{-2 \Lambda}}{ \overline{\omega} } -2 \left( \omega_{\zb} e^{-2 \Lambda} \right)_z  \\
&=   -\omega \frac{h^2 +1}{2} +  2\frac{ \omega_{\zb} \left( \omega e^{-2\Lambda} \right)_z }{\omega}  -2 \left( \omega_{\zb} e^{-2 \Lambda} \right)_z \\
&= 2\left( \frac{ \omega_{\zb}  \omega_z   }{\omega} - \omega_{z \zb} \right) e^{-2\Lambda}   -\omega \frac{h^2 +1}{2},
\end{aligned} $$
where we have used (\ref{Gauss-Codazzis3}) for the fourth equality. This yields
\begin{equation}
\label{omeganustarbase}
\begin{aligned}
\Omega_{\nu^*} &= -2 \omega e^{-2 \Lambda}  \left( \left( \frac{ \omega_{\zb} }{\omega}\right)_z + \frac{h^2 +1}{4} e^{2 \Lambda} \right) \\
&= - 2 \omega e^{-2 \Lambda}  \left( \left( \frac{ \omega_{z} }{\omega}\right)_{\zb} + \frac{h^2 +1}{4} e^{2 \Lambda} \right) .
\end{aligned}
\end{equation}

A consequence of these computations is that the conformal Gauss map of an immersion $X$ is necessarily of vanishing mean curvature in the direction $p(X)$. This is in fact an equivalence.

\begin{theo}
\label{conditionintegrabilite}
Let $Y \, : \, \D \rightarrow \s^{4,1}$ be a spacelike ( that  is $\left\langle Y_z , Y_{\zb} \right\rangle >0$) conformal immersion. Then $ Y$ is the conformal Gauss map of $X \, : \, \D \rightarrow \s^3$ if and only if there exists an isotropic normal direction $\nu$ such that $H_\nu =0$, where $H_\nu$ is the mean curvature in the $\nu$ direction defined in (\ref{defomeganu}). Moreover, $ \nu$ is parallel to $p(X)$.
\end{theo}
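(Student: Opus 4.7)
The forward direction is already available: if $Y$ is the conformal Gauss map of $X$, then $\nu := p(X) = (X,1)$ is isotropic, is a normal direction by the enveloping conditions (\ref{enveloppe1X})--(\ref{enveloppe2X}), and equation (\ref{Hnubase}) precisely reads $H_\nu = 0$. So all the work is in the converse, and the plan is to reconstruct $X$ from $\nu$ and recognise $Y$ in the form of Proposition \ref{lapropositiondelexpressiondeY}.

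Given an isotropic normal direction $\nu$ with $H_\nu = 0$, I first normalize: a nonzero isotropic vector $(v, s) \in \R^{4,1}$ satisfies $|v|^2 = s^2$, which forces $s \neq 0$, so I can smoothly renormalize a representative as $\nu = (X, 1)$ with $X : \D \to \s^3$. I then set $h := Y_5$ and $\vec{N} := (Y_1, Y_2, Y_3, Y_4) - hX \in \R^4$, so that $Y = h\nu + (\vec{N}, 0)$ by construction. The purely algebraic consequences of $\langle Y, Y\rangle = 1$, $\langle Y, \nu\rangle = 0$ and $\langle \nu, \nu\rangle = 0$ give $|\vec{N}|^2 = 1$ and $\langle \vec{N}, X\rangle = 0$, so $\vec{N} \in T_X\s^3$; differentiating $\langle Y, \nu\rangle = 0$ once and using $\langle \nu, Y_z\rangle = \langle \nu, Y_\zb\rangle = 0$ then gives $\langle \vec{N}, X_z\rangle = \langle \vec{N}, X_\zb\rangle = 0$, so $\vec{N}$ is a unit vector normal to the tangent plane of $X(\D)$ in $T_X \s^3$.

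To identify $h$ with the mean curvature of $X$, I plan to compute
\[
\langle X_{z\zb}, \vec{N}\rangle \;=\; \langle (X_{z\zb}, 0), Y\rangle - h\,\langle (X_{z\zb}, 0), \nu\rangle.
\]
The second term equals $h|X_z|^2$ by a standard differentiation of $|X|^2 = 1$. For the first, twice-differentiating $\langle \nu, Y\rangle = 0$ produces $\langle \nu_{z\zb}, Y\rangle + \langle \nu_z, Y_\zb\rangle + \langle \nu_\zb, Y_z\rangle + \langle \nu, Y_{z\zb}\rangle = 0$; the last term is precisely what $H_\nu = 0$ amounts to, and once-differentiating $\langle \nu, Y_\zb\rangle = 0$ (and its conjugate) then gives $\langle \nu_z, Y_\zb\rangle = -\langle \nu, Y_{z\zb}\rangle = 0$, and likewise $\langle \nu_\zb, Y_z\rangle = 0$. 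Hence $\langle \nu_{z\zb}, Y\rangle = 0$, and $\langle X_{z\zb}, \vec{N}\rangle = h|X_z|^2$, which is the mean curvature identity.

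The main obstacle, and the step requiring the most care, is ensuring that $X$ is actually an immersion, since the identifications above presume that $\vec{N}$ is a genuine Gauss map. The observation at the end of example \ref{ouonintroduitlaGaussconforme} that umbilic points of $X$ correspond to critical points of $Y$ points the way: since $Y$ is assumed to be a conformal immersion, the umbilic locus of the candidate $X$ must be nowhere dense. Quantitatively, expanding $\nu_z$ in the moving frame $(Y, Y_z, Y_\zb, \nu, \nu^*)$ and using the orthogonality relations together with $H_\nu = 0$ yields
\[
\nu_z \;=\; -\Omega_\nu \, e^{-2\mathcal{L}} \, Y_\zb \,+\, D\,\nu,
\]
where $\Omega_\nu := 2\langle Y_{zz}, \nu\rangle$ plays the role of the tracefree curvature and $D$ is a scalar; this shows $X$ is an immersion exactly on $\{\Omega_\nu \neq 0\}$, which is open and dense under the immersion hypothesis on $Y$. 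The preceding identification then closes on this dense set, and extends by continuity to give $Y = h(X, 1) + (\vec{N}, 0)$ everywhere, exhibiting $Y$ as the conformal Gauss map of $X$ and establishing $\nu \parallel p(X)$.
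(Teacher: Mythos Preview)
Your argument is correct and takes a genuinely different route from the paper's. The paper normalises $\nu$ to $p(X)$, checks the enveloping conditions, and then establishes conformality of $X$ in one line via the identity $\langle p(X)_z, p(X)_z\rangle = H_{p(X)}\Omega_{p(X)} = 0$ from (\ref{nunustar}); it then invokes the uniqueness result Theorem~\ref{Yseulecongruenceenveloppante} to conclude that $Y = \pm(\text{conformal Gauss map of }X)$. You instead reconstruct the data explicitly: setting $h = Y_5$ and $\vec{N} = Y_\circ - hX$, you verify directly that $\vec{N}$ is a unit normal to $X$ in $\s^3$ and that $\langle X_{z\zb}, \vec{N}\rangle = h|X_z|^2$, recovering the expression of Proposition~\ref{lapropositiondelexpressiondeY} without appealing to uniqueness. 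The paper's route is shorter and leverages machinery already built; yours is self-contained and makes transparent exactly where the hypothesis $H_\nu = 0$ enters (in the vanishing of $\langle \nu, Y_{z\zb}\rangle$). Note that conformality of $X$, which you do not state, does follow from your frame expansion of $\nu_z$ by pairing it with itself. Your care about whether $X$ is actually an immersion is commendable: the paper applies Theorem~\ref{Yseulecongruenceenveloppante} without checking either that $X$ is immersed or that its umbilic set is nowhere dense, so your treatment is in fact more scrupulous here, though your assertion that $\{\Omega_\nu \neq 0\}$ is dense is not fully argued and would merit a line (e.g.\ observing that $\Omega_\nu\equiv 0$ on an open set would force $[\nu]$ constant there, placing $Y$ in a degenerate hyperplane).
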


\begin{proof}
We have shown in (\ref{Hnubase}) that if $Y$ is the conformal Gauss map of $X$ then $Y$ is of null mean curvature in the isotropic $p(X)$ direction.

Reciprocally consider $Y$ of null mean curvature in the isotropic direction $\nu$ Let us build $X \, : \, \D  \rightarrow \s^3$ such that $Y$ is the conformal Gauss map of $X$. Since $\langle \nu , \nu \rangle = 0$ and $\nu \neq 0$, the last coordinate $\nu_5$ of $\nu$ is necessarily non null. One can then renormalize $\nu$ to $ \frac{\nu}{\nu_5} =p(X)$.  There then exists $ X \, : \D \rightarrow \s^3$ such that 
$$\begin{aligned}
&\left\langle Y , p(X) \right\rangle = 0, \\
&\left\langle Y_z , p(X) \right\rangle = 0, \\
&\left\langle Y_{z \zb} , p(X) \right\rangle = 0 .
\end{aligned}$$
One checks that hypotheses (\ref{enveloppe1X}) and (\ref{enveloppe2X}) are satisfied and that $Y$ envelopes $X$. We now just have to prove that $X$ is conformal and apply \ref{Yseulecongruenceenveloppante} to conclude.

Since $\left\langle X_z , X_z \right\rangle = \left\langle p(X)_z , p(X)_z \right\rangle$  and according to (\ref{nunustar})
$$  \left\langle p(X)_z ,p(X)_z \right\rangle = H_{p(X)} \Omega_{p(X)} = 0,$$
$X$ is shown to be conformal, which concludes the proof.
\end{proof}

We must draw the reader's attention to the fact that $Y$ is \emph{not} a priori the conformal Gauss map of $X^*$. Indeed while $Y$ envelopes $X^*$, $X^*$ is not necessarily conformal :
$$\begin{aligned}
\left\langle X^*_z , X^*_z \right\rangle &= \left\langle p( X^* )_z,p(X^*)_z \right\rangle \\
&=\left\langle \left( l \nu^*  \right)_z, \left(l \nu^*  \right)_z \right\rangle \\
&=  l^2 \left\langle \nu^*_z , \nu^*_z \right\rangle
\end{aligned}$$
since (\ref{nustarznustar}) stands and $\nu^*$ is isotropic. Then using (\ref{nunustar}) 

$$ \begin{aligned}
\left\langle X^*_z , X^*_z \right\rangle &=l^2 H_{\nu^*} \Omega_{\nu^*} \\ 
&=l^2\omega e^{-2 \Lambda}  \left( \left( \frac{ \omega_{z} }{\omega}\right)_{\zb} + \frac{h^2 +1}{4} e^{2 \Lambda} \right) \frac{2 \mathcal{W}_{\s^3} (X)}{ \left|\omega \right|^2 e^{-2 \Lambda} },
\end{aligned}$$
with (\ref{omeganustarbase}) and (\ref{Hnustarbase}).

Then 
\begin{equation}
\label{Xstarconforme?}
\begin{aligned}
\left\langle X^*_z , X^*_z \right\rangle &=\frac{4 \omega |\omega|^2 }{\left( |\omega|^2 \left( h^2+1 \right) + \left|\nabla h \right|^2 e^{2 \Lambda} \right)^2 }\mathcal{W}_{\s^3} (X) \left( \left( \frac{\omega_z}{\omega} \right)_{\zb} + \frac{h^2+1}{4} e^{2\Lambda} \right).
\end{aligned}
\end{equation}
One can notice that a simple condition to ensure that is $X^*$ is conformal is $\mathcal{W}_{\s^3} (X) =0$, that is $X$ is a Willmore immersion.
The computations for an immersion $\Phi$ in $\R^3$ (see (\ref{YzPhi})-(\ref{QPhi}))  bring to the forefront the quantity $$W( \Phi) = H_{z \zb} + \frac{ \left| \Omega \right|^2 e^{-2\lambda} }{2} H \in \R .$$ We refer the reader to (\ref{wtorduXPhi}) for the proof that $$\mathcal{W}_{\s^3} (X) = \frac{ |\Phi|^2 +1}{2} \mathcal{W}(\Phi ).$$





\section{Conformal Gauss map of Willmore Immersions}
As the quantity $\mathcal{W}$  appears in several computations linked to the geometry of $Y$, it is natural to  study the conformal Gauss map of immersions satisfying $\mathcal{W} = 0$ i.e. Willmore immersions.
\subsection{Willmore immersions }
We first recall the definition of Willmore surfaces.  
\begin{de}
Let $\Phi \, : \, \D \rightarrow \R^3$ be a conformal immersion of representation $X$ in $\s^3$ and $Z$ in $\Hy^3$. $\Phi$, $X$ and $Z$ are said to be Willmore immersions if $\mathcal{W} \left( \Phi \right) = 0$ (or equivalently, see (\ref{wtorduXPhi}), $\mathcal{W}_{\s^3} ( X) =0$).
\end{de}

In his studies of Willmore immersions, T. Rivière brought to light equations in divergence form satisfied by Willmore immersions (see  (7.15), (7.16) and (7.30c) in \cite{bibpcmi}) and the conserved quantities associated. Later Y. Bernard showed in \cite{bibnoetherwill}  they could be seen as consequences of the invariance of the Willmore functional $W(\Phi) = \int_{\D} H^2 e^{2\lambda} dz$ under the action of the conformal group.

\begin{theo}[(7.15), (7.16) and (7.30c) in \cite{bibpcmi}]
\label{thmV}
Let $\Phi \, : \,  \D \rightarrow \R^3$ be a conformal Willmore immersion. Then 
\begin{equation}
\label{equationsdivergencewillmore}
\begin{aligned}
&div \left(  \nabla \vec{H} - 3 \pi_{\n} \left( \nabla \vec{H} \right) + \nabla^\perp \n \times \vec{H} \right)  = 0, \\
&div \left( \left\langle\nabla \vec{H} - 3 \pi_{\n} \left( \nabla \vec{H} \right) + \nabla^\perp \n \times \vec{H}  , \Phi \right\rangle \right) = 0 \\
&div \left( \Phi \times \left( \nabla \vec{H} - 3 \pi_{\n} \left( \nabla \vec{H} \right) + \nabla^\perp \n \times \vec{H}  \right) + 2 H \nabla^\perp \Phi \right)=0 \\
&div \left(|\Phi|^2 \left(  \nabla \vec{H} - 3 \pi_{\n} \left( \nabla \vec{H} \right) + \nabla^\perp \n \times \vec{H}  \right) -2 \left\langle  \nabla \vec{H} - 3 \pi_{\n} \left( \nabla \vec{H} \right) + \nabla^\perp \n \times \vec{H} , \Phi \right\rangle \Phi \right. \\ & \left.+4 \Phi \times \left( \n \times \Ar \nabla \Phi \right)  \right) =0.
\end{aligned}
\end{equation}

This allows us to define the conserved quantities of $\Phi$ : 
\begin{equation}
\label{quantitesconserveewillmore}
\begin{aligned}
& V_{\mathrm{tra}} = \nabla \vec{H} - 3 \pi_{\n} \left( \nabla \vec{H} \right) + \nabla^\perp \n \times \vec{H}  \\
& V_{\mathrm{dil}} = \left\langle  \nabla \vec{H} - 3 \pi_{\n} \left( \nabla \vec{H} \right) + \nabla^\perp \n \times \vec{H}  , \Phi \right\rangle  = \left\langle V_{\mathrm{tra}}, \Phi \right\rangle \\
& V_{\mathrm{rot}} = \Phi \times \left( \nabla \vec{H} - 3 \pi_{\n} \left( \nabla \vec{H} \right) + \nabla^\perp \n \times \vec{H}  \right) + 2 H \nabla^\perp \Phi = \Phi \times  V_{\mathrm{tra}}  + 2 H \nabla^\perp \Phi \\
& V_{\mathrm{inv}} = -|\Phi|^2 \left( \nabla \vec{H} - 3 \pi_{\n} \left( \nabla \vec{H} \right) + \nabla^\perp \n \times \vec{H}  \right) \\&+2 \left\langle  \nabla \vec{H} - 3 \pi_{\n} \left( \nabla \vec{H} \right) + \nabla^\perp \n \times \vec{H}  , \Phi \right\rangle \Phi  -4 \Phi \times \left( \n \times \Ar \nabla \Phi \right)  \\
& \quad \quad = -|\Phi|^2 V_{\mathrm{tra}} +2V_{\mathrm{dil}} \Phi -4 \Phi \times \left( \n \times \Ar \nabla \Phi \right).
\end{aligned}
\end{equation}
\end{theo}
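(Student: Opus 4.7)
The plan is to establish the four divergence identities in stages, with the first one (a divergence form of the Willmore equation) being the cornerstone and the remaining three deduced from it by Leibniz-type manipulations tailored to the infinitesimal generators of the conformal group.

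I would begin by proving the pointwise identity
$$\mathrm{div}\!\left(\nabla\vec H - 3\pi_{\n}(\nabla\vec H) + \nabla^\perp\n\times\vec H\right) = 2\,\mathcal W(\Phi)\,\n,$$
valid for any smooth conformal immersion $\Phi$, not just Willmore ones. Writing $\vec H = H\n$ and using $\langle\nabla\n,\n\rangle=0$, one gets $\pi_{\n}(\nabla\vec H)=\nabla H\,\n$, so $V_{\mathrm{tra}}$ decomposes into the normal part $-2\nabla H\,\n$ and the tangential parts $H\nabla\n$ and $H\,\nabla^\perp\n\times\n$. Taking the divergence and invoking the Codazzi--Mainardi equations, the normal component collects to $(\Delta H + |\Ar|^2 H)\n = \mathcal W(\Phi)\,\n$, while the tangential contributions cancel identically. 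The Willmore hypothesis then gives $\mathrm{div}\,V_{\mathrm{tra}} = 0$.

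Given this, the other three identities follow algebraically. For the dilation current, Leibniz yields $\mathrm{div}\langle V_{\mathrm{tra}},\Phi\rangle = \langle V_{\mathrm{tra}},\nabla\Phi\rangle_g$, and one checks the right-hand side vanishes because the normal part of $V_{\mathrm{tra}}$ is killed by the tangent vector $\nabla\Phi$ and the two tangential contributions cancel by a symmetry/cross-product identity. For the rotation current, $\mathrm{div}(\Phi\times V_{\mathrm{tra}})$ produces a residual of the form $\partial_i\Phi\times (V_{\mathrm{tra}})_i$ (summed over the two tangent directions) which is precisely absorbed by $\mathrm{div}(2H\nabla^\perp\Phi) = 2\nabla H\times\nabla^\perp\Phi$ after a short computation using $\sum_i\partial_i\Phi\times\partial_i^\perp\Phi=0$ and the Weingarten relation. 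For the inversion current, the same scheme applied to $-|\Phi|^2 V_{\mathrm{tra}} + 2\langle V_{\mathrm{tra}},\Phi\rangle\Phi$ produces residuals proportional to $\nabla\Phi$ and $\Phi$ that are cancelled by $\mathrm{div}(-4\Phi\times(\n\times\Ar\nabla\Phi))$, again via Codazzi--Mainardi.

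A conceptually cleaner alternative, in the spirit of Y.~Bernard's Noether-theoretic derivation, proceeds from the fact that $W(\Phi)=\int H^2\,dvol_g$ is invariant under each one-parameter subgroup of $SO(4,1)$: for each such generator $\xi$ one obtains a current $J_\xi$ with $\mathrm{div}(J_\xi) = \mathcal W(\Phi)\,\langle\n,\xi\rangle$, and matching $\xi$ to the Killing generators of translations, dilations, rotations, and inversions reproduces $V_{\mathrm{tra}}, V_{\mathrm{dil}}, V_{\mathrm{rot}}, V_{\mathrm{inv}}$. The main obstacle in either approach is the inversion case: the Kelvin generator is the only non-affine one, and both $dvol_g$ and the conformal factor transform non-trivially, so careful bookkeeping is required to identify the tangential correction $-4\Phi\times(\n\times\Ar\nabla\Phi)$. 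I would verify this delicate step either by specializing to isothermal coordinates and matching coefficients term by term, or by cross-checking against the $SO(4,1)$-covariance of $Y$ provided by Proposition \ref{modificationYtransformationconforme}, which forces the inversion current to transform into the translation current under the inversion $\iota$.
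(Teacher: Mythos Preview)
The paper does not actually prove this theorem: it is stated as a quotation from Rivi\`ere's lecture notes \cite{bibpcmi} (equations (7.15), (7.16), (7.30c)), with the remark that Bernard \cite{bibnoetherwill} later reinterpreted these identities via Noether's theorem applied to the conformal invariance of $W$. There is no proof environment following the statement; the paper uses the result as input for the subsequent Theorems \ref{repartitionquantiteconservees} and \ref{exchangelawgeneral}.

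Your proposal is therefore not a comparison target but a self-standing sketch, and as such it is reasonable and in line with how these identities are established in the cited literature. The first step --- showing that $\mathrm{div}\,V_{\mathrm{tra}}$ equals a multiple of $\mathcal W(\Phi)\,\n$ for any conformal immersion --- is exactly Rivi\`ere's original derivation of the divergence form of the Willmore equation. Deducing $V_{\mathrm{dil}}$, $V_{\mathrm{rot}}$, $V_{\mathrm{inv}}$ by Leibniz and Codazzi--Mainardi is also the route taken in \cite{bibpcmi}, and your alternative Noether scheme is precisely Bernard's argument in \cite{bibnoetherwill}. One small correction: in your first display you write the right-hand side as $2\,\mathcal W(\Phi)\,\n$, but with the normalisation used in this paper ($\mathcal W(\Phi)=H_{z\bar z}+\tfrac12|\Omega|^2e^{-2\lambda}H$) the factor in front of $\n$ should be checked carefully; the paper's own computation (\ref{vtranslalter}) gives $V_{\mathrm{tra}}=-2(\nabla H\,\n+H\Ar\nabla\Phi)$, from which the constant is easily recovered. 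Your caveat about the inversion case is apt --- that is indeed where the bookkeeping is heaviest --- and your suggested cross-check via Proposition \ref{modificationYtransformationconforme} is exactly the mechanism the paper exploits downstream in Corollary \ref{laloidechange}.
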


\begin{remark}
As suggested by the terminology $V_{\mathrm{tra}}$ follows from the invariance by translations, $V_{\mathrm{dil}}$ the invariance by dilations, $V_{\mathrm{rot}}$ the invariance by rotations and $V_{\mathrm{inv}}$  the invariance by transformations of the form $x \mapsto \frac{x-\vec{a}}{\left| x- \vec{a} \right|^2}$.
\end{remark}

While $ V_{\mathrm{tra}} = \nabla \vec{H} - 3 \pi_{\n} \left( \nabla \vec{H} \right) + \nabla^\perp \n \times \vec{H} $ is more apt  to higher codimensions generalizations, we will prefer another expression. Using (\ref{formuleennablaperpn}) one has

\begin{equation}
\label{vtranslalter}
\begin{aligned}
V_{\mathrm{tra}} &=  \nabla \vec{H} - 3 \pi_{\n} \left( \nabla \vec{H} \right) + \nabla^\perp \n \times \vec{H}  \\
&=-2 \nabla H \n + H \nabla \n + H^2 \nabla \Phi - H \Ar \nabla \Phi \\
&= -2 \left( \nabla H \n + H \Ar \nabla \Phi \right).
\end{aligned}
\end{equation}

There is a  notion closely linked to Willmore immersions which will be useful later, called conformal Willmore immersions.
\begin{de}
\label{conformalWillmoreimmersion}
Let $\Phi \, : \, \D \rightarrow \R^3$ be a conformal immersion of representation $X$ in $\s^3$ and $Z$ in $\Hy^3$. $\Phi$, $X$ and $Z$ are said to be conformal Willmore immersions if there exists an holomorphic function $F$ such that $\mathcal{W} \left( \Phi \right) = \Re \left( \overline{F} \Omega e^{-2\lambda} \right)$ (or equivalently, see (\ref{wtorduXPhi}), an holomorphic function $f$ such that $\mathcal{W}_{\s^3} ( X) = \Re \left( \overline{f} \omega e^{-2 \Lambda} \right)$).
\end{de}

While Willmore immersions are critical points of the Willmore functional, conformal Willmore immersions are critical points of the Willmore functional \emph{in a conformal class} and $f$ acts as a Lagrange multiplier (see subsection X.7.4 in \cite{bibconservationlawsforconformallyinvariantproblems} for more details).

\subsection{Willmore and harmonic conserved quantities}
Equality (\ref{YzzbPhi}) (or equivalently (\ref{YzzbX})) yields the following theorem.

\begin{theo}
Let $\Phi \, : \, \D \rightarrow \R^3$ be a conformal immersion of representation $X$ in $\s^3$ and $Z$ in $\Hy^3$.
Then $\Phi$ is Willmore if and only if its conformal Gauss map $Y$ is minimal, that is if it is conformal and satisfies 
$$Y_{z\zb} + \left\langle Y_z , Y_{\zb} \right\rangle  Y = 0$$
which in real notations is tantamount to
\begin{equation}
\label{equationYminimale}
\Delta Y + \left\langle \nabla Y  . \nabla Y \right\rangle Y =0.
\end{equation}
\end{theo}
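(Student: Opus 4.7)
The plan is to read off the equivalence directly from the computation of $Y_{z\bar z}$ performed in subsection \ref{geometryofconformalGaussmaps}, without any further calculation of substance.

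First, I would note that conformality of $Y$ is not something we need to verify here: equation (\ref{lametriqueduY}) of example \ref{ouonintroduitlaGaussconforme} already establishes that $\langle Y_z, Y_z\rangle = 0$ in complex coordinates, so $Y$ is automatically conformal (with critical points at umbilics). Hence the whole content of the theorem lies in the PDE $Y_{z\bar z} + \langle Y_z, Y_{\bar z}\rangle Y = 0$, and we may switch freely between the immersion $\Phi$ and its representation $X$ in $\s^3$ via the relation (\ref{wtorduXPhi}) $\mathcal{W}_{\s^3}(X) = \tfrac{|\Phi|^2+1}{2}\mathcal{W}(\Phi)$, which makes the vanishing of $\mathcal{W}_{\s^3}(X)$ equivalent to that of $\mathcal{W}(\Phi)$.

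Next I would invoke the key identity (\ref{Yzzbbase}) in the $\s^3$ frame, namely
\[
Y_{z\bar z} = \mathcal{W}_{\s^3}(X)\,\nu - \frac{|\omega|^2 e^{-2\Lambda}}{2}\,Y,
\]
where $\nu = p(X) = \binom{X}{1}$. To pair this with the conformal factor that appears on the right-hand side of the minimality equation, I would compute $\langle Y_z, Y_{\bar z}\rangle$: by conformality of $Y$ and equation (\ref{e2ltordu}) giving $e^{2\mathcal{L}} = |\omega|^2 e^{-2\Lambda}$, one obtains exactly $\langle Y_z, Y_{\bar z}\rangle = \tfrac{1}{2}|\omega|^2 e^{-2\Lambda}$.

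Substituting these two expressions into $Y_{z\bar z} + \langle Y_z, Y_{\bar z}\rangle Y$, the $Y$-terms cancel and we are left with
\[
Y_{z\bar z} + \langle Y_z, Y_{\bar z}\rangle Y = \mathcal{W}_{\s^3}(X)\,\nu.
\]
Since $\nu = \binom{X}{1} \neq 0$ pointwise, this quantity vanishes identically if and only if $\mathcal{W}_{\s^3}(X) \equiv 0$, which by (\ref{wtorduXPhi}) is equivalent to the Willmore equation $\mathcal{W}(\Phi) = 0$. Finally, the equivalence with the real form $\Delta Y + \langle \nabla Y \cdot \nabla Y\rangle Y = 0$ is merely the identities $\Delta = 4\partial_z\partial_{\bar z}$ and $\langle \nabla Y \cdot \nabla Y\rangle = 4\langle Y_z, Y_{\bar z}\rangle$. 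There is no real obstacle here: all the geometric work has been packaged into (\ref{Yzzbbase}) and (\ref{e2ltordu}), and the theorem reduces to a one-line cancellation.
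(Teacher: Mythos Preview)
Your proposal is correct and follows essentially the same route as the paper: the paper's entire proof consists of the single remark that equality (\ref{YzzbPhi}) (equivalently (\ref{YzzbX}), which is your (\ref{Yzzbbase})) immediately yields the theorem. You have simply spelled out the cancellation and the identification $\langle Y_z, Y_{\bar z}\rangle = \tfrac{1}{2}|\omega|^2 e^{-2\Lambda}$ explicitly, which the paper leaves implicit.
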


Then  assuming (\ref{equationYminimale}), for all $i,j \in \{1 \dots 5\} $ 

$$div \left( \nabla Y_i Y_j - Y_i \nabla Y_j \right) = \Delta Y_i Y_j - \Delta Y_j Y_i = 0.$$
$Y$ then satisfies the following conservation laws (that can actually be thought to follow from the invariance group $SO(4,1)$ of the energy $E(Y) = \int_{\D} \left\langle \nabla Y. \nabla Y \right\rangle dz$) : 

\begin{equation}
\label{conservationlawsY}
div \left( \nabla Y Y^T - Y \nabla Y^T \right) = 0.
\end{equation}

These conservation laws stem from the seminal works of F. Hélein on harmonic maps in the euclidean spheres (see \cite{bibharmmaps} for an extensive study) and the generalization of M. Zhu to harmonic maps in de Sitter spaces in \cite{MR3003285}.

\begin{theo}
\label{repartitionquantiteconservees}
Let $\Phi \, : \, \D \rightarrow \R^3$ be a Willmore immersion, conformal,  of conformal Gauss map $Y$. 
Let 
$$\begin{aligned} 
\mu = \begin{pmatrix}\nabla Y_i Y_j - Y_i \nabla Y_j \end{pmatrix} = \nabla Y Y^T - Y \nabla Y^T. \end{aligned}$$
Then 
\sbox0{$\begin{matrix}0 & - \tilde V_{\mathrm{rot} \,3 } &  \tilde V_{\mathrm{rot} \, 2}\\\tilde V_{\mathrm{rot} \, 3} & 0 & - \tilde V_{\mathrm{rot} \, 1}\\- \tilde V_{\mathrm{rot} \, 2} & \tilde V_{\mathrm{rot} \, 1} & 0\end{matrix}$}

$$2 \mu = \begin{pmatrix} U &-\frac{V_{\mathrm{tra}} - V_{\mathrm{inv}}}{2} & \frac{ V_{\mathrm{tra}} + V_{\mathrm{inv}} }{2} \\ \left(\frac{V_{\mathrm{tra}} - V_{\mathrm{inv}}}{2} \right)^T & 0 & V_{\mathrm{dil}} \\ -\left(\frac{V_{\mathrm{inv}}+ V_{\mathrm{tra}} }{2} \right)^T & -V_{\mathrm{dil}} & 0 \end{pmatrix}$$
where $V_{\mathrm{tra}}, V_{\mathrm{dil}}, V_{\mathrm{rot}} $ and $V_{\mathrm{inv}}$ are defined in theorem \ref{equationsdivergencewillmore} and $$ U = \left(\begin{matrix}0 & - \tilde V_{\mathrm{rot} \,3 } &  \tilde V_{\mathrm{rot} \, 2}\\\tilde V_{\mathrm{rot} \, 3} & 0 & - \tilde V_{\mathrm{rot} \, 1}\\- \tilde V_{\mathrm{rot} \, 2} & \tilde V_{\mathrm{rot} \, 1} & 0\end{matrix} \right)$$ with $ \tilde  V_{\mathrm{rot}} = V_{\mathrm{rot}} + 2 \nabla^\perp \n.$
\end{theo}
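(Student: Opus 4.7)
My plan is a direct computation exploiting the explicit forms (\ref{Yphi}) for $Y$ and (\ref{lenabladuY}) for $\nabla Y$. Since $\mu$ is antisymmetric, it suffices to determine four natural blocks: the scalar $(4,5)$-entry, the two 3-vectors $(\mu_{i4})_{i=1,2,3}$ and $(\mu_{i5})_{i=1,2,3}$, and the antisymmetric upper-left $3\times 3$ block. The central simplifying observation comes from (\ref{HenfonctiondeY}): one has $Y_5-Y_4 = H$ and $Y_5+Y_4 = H|\Phi|^2 + 2\langle\n,\Phi\rangle$, while $(Y_1,Y_2,Y_3) = H\Phi + \n =: \vec Y_\diamond$, with the analogous relations for the gradients. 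The only further input used throughout is the Weingarten-type identity $\nabla\n = -\Ar\nabla\Phi - H\nabla\Phi$, which is implicit in the rewriting (\ref{vtranslalter}) of $V_{\mathrm{tra}}$.

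For the $(4,5)$-entry, $Y_5-Y_4=H$ collapses $\mu_{45}$ to $H\nabla Y_4 - Y_4\nabla H$; expanding and using $\langle\n,\nabla\Phi\rangle=0$ together with Weingarten yields $2\mu_{45} = -2\nabla H\langle\n,\Phi\rangle - 2H\langle\Ar\nabla\Phi,\Phi\rangle = \langle V_{\mathrm{tra}},\Phi\rangle = V_{\mathrm{dil}}$. For the two columns I handle the half-difference and half-sum separately. The half-difference simplifies: $\mu_{i5}-\mu_{i4} = H\nabla Y_i - Y_i\nabla H$ becomes $H^2\nabla\Phi_i + H\nabla n_i - n_i\nabla H$, which after Weingarten is exactly $V_{\mathrm{tra},i}/2$. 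The half-sum is obtained by expanding $\nabla Y_i(Y_5+Y_4) - Y_i\nabla(Y_5+Y_4)$ with $Y_5+Y_4 = H|\Phi|^2 + 2\langle\n,\Phi\rangle$; it is matched with $V_{\mathrm{inv},i}/2$ via the explicit formula of theorem \ref{equationsdivergencewillmore} together with the BAC--CAB identity $\Phi\times(\n\times\Ar\nabla\Phi) = \langle\Phi,\Ar\nabla\Phi\rangle\n - \langle\Phi,\n\rangle\Ar\nabla\Phi$. Combining these gives $2\mu_{i5} = (V_{\mathrm{tra}}+V_{\mathrm{inv}})_i/2$ and $2\mu_{i4} = -(V_{\mathrm{tra}}-V_{\mathrm{inv}})_i/2$, as required.

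For the upper-left block I use Hodge duality in $\R^3$: the antisymmetric matrix $(\mu_{ij})_{i,j\in\{1,2,3\}}$ corresponds to the vector field $\nabla\vec Y_\diamond \times \vec Y_\diamond$, and the target $U$ corresponds to $-\tilde V_{\mathrm{rot}}/2$. Expanding with $\vec Y_\diamond = H\Phi + \n$ and $\Phi\times\Phi = 0$ leaves five terms, which I simplify using two tangent-frame identities for the positively oriented conformal immersion: $\nabla\Phi\times\n = \nabla^\perp\Phi$ and $\nabla\n\times\n = -\nabla^\perp\n - 2H\nabla^\perp\Phi$, both obtained by evaluating in the orthonormal frame $(e^{-\lambda}\partial_1\Phi, e^{-\lambda}\partial_2\Phi, \n)$ and invoking Weingarten for the second. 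One further use of Weingarten then collapses the remaining terms to $-\tilde V_{\mathrm{rot}}/2$, yielding $2\mu_{ij} = U_{ij}$.

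The main obstacle I anticipate is the algebraic bookkeeping for $V_{\mathrm{inv}}$ in the half-sum computation: it is the only conserved quantity that simultaneously involves $|\Phi|^2$ factors and a triple cross product, so a genuine cascade of cancellations is needed before the match with $\mu_{i5}+\mu_{i4}$ becomes visible. All the other blocks reduce quickly once the relations $Y_5\pm Y_4 = H,\ H|\Phi|^2 + 2\langle\n,\Phi\rangle$ and the two tangent-frame identities have been invoked.
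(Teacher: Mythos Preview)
Your proof is correct and essentially equivalent to the paper's, though organized slightly differently. The paper introduces the metric matrix $\epsilon=\mathrm{diag}(1,1,1,1,-1)$ and computes $\mu\epsilon v = \langle Y,v\rangle\nabla Y-\langle\nabla Y,v\rangle Y$ for the Lorentzian test vectors $v=(w,0,0)$, $(0,1,1)$, $(0,1,-1)$; choosing the two lightlike vectors $(0,1,\pm 1)$ is precisely your half-sum/half-difference trick $Y_5\pm Y_4=H,\ H|\Phi|^2+2\langle\n,\Phi\rangle$, while testing against $(w,0,0)$ and reading off the cross-product is your Hodge-duality computation of the $3\times 3$ block. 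The paper's packaging via $\mu\epsilon v$ is a bit more uniform (one formula, three inputs) and makes the Lorentzian covariance manifest, which is what feeds directly into the subsequent $\mu_\varphi=M\mu M^T$ transformation law; your entrywise approach is more elementary and makes the role of the specific identities $\nabla\Phi\times\n=\nabla^\perp\Phi$ and $\nabla\n\times\n=-\nabla^\perp\n-2H\nabla^\perp\Phi$ explicit. Both routes invoke the same ingredients (the explicit $Y$, Weingarten, and the cross-product algebra for $\tilde V_{\mathrm{rot}}$), and your anticipated ``main obstacle'' for $V_{\mathrm{inv}}$ is exactly the computation the paper performs when it evaluates $\mu\epsilon(0,1,-1)^T$.
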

\begin{proof}

We decompose $\mu$ in blocks : 
$$\mu = \begin{pmatrix} P & a & b \\-a^T & 0 & \omega \\ -b^T & -\omega &0 \end{pmatrix},$$ with $P \in M_3( \R)$ antisymetric, $a,b \in \R^3$ and $\omega \in \R$.
Let $ \epsilon = \begin{pmatrix} 1&0&0&0&0 \\
0&1&0&0&0 \\0&0&1&0&0 \\0&0&0&1&0 \\0&0&0&0&-1 \end{pmatrix}$.
Then given any $a,b \in \R^5$, $$a^T \epsilon b = \langle a,b \rangle$$ where $\langle .,.\rangle$ is the Lorentzian product in $\R^{4,1}$.

For any $w \in \R^3$
$$ \begin{aligned} \mu \epsilon \begin{pmatrix} w \\ 0 \\0 \end{pmatrix} &= \left\langle Y,\begin{pmatrix}w \\0 \\ 0 \end{pmatrix} \right\rangle \nabla Y - \left\langle \nabla Y , \begin{pmatrix}w \\0 \\ 0 \end{pmatrix} \right\rangle Y \\
&=\left\langle H\Phi + \n , w \right\rangle \left[ \nabla H \begin{pmatrix} \Phi \\ \frac{|\Phi|^2-1}{2} \\ \frac{|\Phi|^2 +1}{2} \end{pmatrix} - \Ar \begin{pmatrix} \nabla \Phi \\ \langle \nabla \Phi, \Phi \rangle \\ \langle \nabla \Phi , \Phi \rangle \end{pmatrix} \right]  \\
&-\left\langle \nabla H \Phi - \Ar \nabla \Phi , w \right\rangle \left[  H \begin{pmatrix} \Phi \\ \frac{|\Phi|^2-1}{2} \\ \frac{|\Phi|^2 +1}{2} \end{pmatrix} + \begin{pmatrix} \n \\ \langle \n, \Phi \rangle \\ \langle \n , \Phi \rangle \end{pmatrix} \right] , \\ 
\end{aligned}$$
while 
$$ \mu \epsilon \begin{pmatrix} w \\ 0 \\0 \end{pmatrix} = \begin{pmatrix} Pw \\ - \langle a ,w \rangle \\ -\langle b ,w \rangle \end{pmatrix}.$$
Focusing on the first three coordinates yields 
$$ \begin{aligned} Pw &= \left\langle H\Phi + \n , w \right\rangle \left[ \nabla H  \Phi  - \Ar  \nabla \Phi \right]  -\left\langle \nabla H \Phi - \Ar \nabla \Phi , w \right\rangle \left[  H  \Phi +  \n  \right] \\
&= w \times \left[ \Phi \times \left( \nabla H \n + H \Ar \nabla \Phi \right) + \n \times \Ar \nabla \Phi \right]  \\
&= -\frac{1}{2} w \times \left[ \Phi \times V_{\mathrm{tra}} + 2 \Ar \nabla \Phi \times \n \right]= -\frac{1}{2} w \times \tilde V_{\mathrm{rot}}
\end{aligned} $$
with $\tilde V_{\mathrm{rot}} = V_{\mathrm{rot}}-2 \nabla^\perp \n =  \Phi \times V_{\mathrm{tra}} + 2 \Ar \nabla \Phi \times \n $. With this valid for all $w \in \R^3$ we deduce 
$$P =\frac{1}{2}\begin{pmatrix}0 & - \tilde V_{\mathrm{rot} \,3 } &  \tilde V_{\mathrm{rot} \, 2}\\\tilde V_{\mathrm{rot} \, 3} & 0 & - \tilde V_{\mathrm{rot} \, 1}\\- \tilde V_{\mathrm{rot} \, 2} & \tilde V_{\mathrm{rot} \, 1} & 0\end{pmatrix}.$$

 Similarly : 
$$ \begin{aligned} \mu \epsilon \begin{pmatrix} 0 \\ 1 \\1 \end{pmatrix} &= \left\langle Y,\begin{pmatrix}0 \\1 \\ 1 \end{pmatrix} \right\rangle \nabla Y - \left\langle \nabla Y , \begin{pmatrix}0\\1 \\ 1 \end{pmatrix} \right\rangle Y \\
&=-H \left[ \nabla H \begin{pmatrix} \Phi \\ \frac{|\Phi|^2-1}{2} \\ \frac{|\Phi|^2 +1}{2} \end{pmatrix} - \Ar \begin{pmatrix} \nabla \Phi \\ \langle \nabla \Phi, \Phi \rangle \\ \langle \nabla \Phi , \Phi \rangle \end{pmatrix} \right]  +\nabla H  \left[  H \begin{pmatrix} \Phi \\ \frac{|\Phi|^2-1}{2} \\ \frac{|\Phi|^2 +1}{2} \end{pmatrix} + \begin{pmatrix} \n \\ \langle \n, \Phi \rangle \\ \langle \n , \Phi \rangle \end{pmatrix} \right]  \\ 
&= \nabla  H \begin{pmatrix} \n \\ \langle \n, \Phi \rangle \\ \langle \n , \Phi \rangle \end{pmatrix} + H \Ar \begin{pmatrix} \nabla \Phi \\ \langle \nabla \Phi , \Phi \rangle \\ \langle \nabla \Phi , \Phi \rangle \end{pmatrix},
\end{aligned}$$
while  $$\mu \epsilon \begin{pmatrix} 0 \\ 1 \\1 \end{pmatrix} = \begin{pmatrix} a-b \\ -\omega \\ -\omega \end{pmatrix}.$$
Hence $$ \begin{aligned} a-b &= - \frac{V_{\mathrm{tra}}}{2}, \\
\omega &= \frac{V_{\mathrm{dil}}}{2}. \end{aligned}$$

In an alike manner, computing in two ways $\mu \epsilon \begin{pmatrix}0 \\ 1 \\-1 \end{pmatrix} $ yields 
$$a+b = \frac{V_{\mathrm{inv}}}{2}.$$
Hence $$\begin{aligned} a&= -\frac{V_{\mathrm{tra}} - V_{\mathrm{inv}}}{4} \\
b&=  \frac{V_{\mathrm{inv}} +V_{\mathrm{tra}}}{4}. \end{aligned}$$

To conclude we assemble all the previous results and reach

\sbox0{$\begin{matrix}0 & - \tilde V_{\mathrm{rot} \,3 } &  \tilde V_{\mathrm{rot} \, 2}\\\tilde V_{\mathrm{rot} \, 3} & 0 & - \tilde V_{\mathrm{rot} \, 1}\\- \tilde V_{\mathrm{rot} \, 2} & \tilde V_{\mathrm{rot} \, 1} & 0\end{matrix}$}

$$2 \mu = \begin{pmatrix} U &-\frac{V_{\mathrm{tra}} - V_{\mathrm{inv}}}{2} & \frac{ V_{\mathrm{tra}} + V_{\mathrm{inv}} }{2} \\ \left(\frac{V_{\mathrm{tra}} - V_{\mathrm{inv}}}{2} \right)^T & 0 & V_{\mathrm{dil}} \\ -\left(\frac{V_{\mathrm{inv}}+ V_{\mathrm{tra}} }{2} \right)^T & -V_{\mathrm{dil}} & 0 \end{pmatrix}$$
which is the desired result.

\end{proof}
\begin{remark}
While $V_{\mathrm{rot}}$ stems from the invariance by rotation of the Willmore energy $W(\Phi) = \int H^2 e^{2\lambda}dz^2$, $\tilde V_{\mathrm{rot}}$ is a consequence of the invariance by rotation of  $\int \left| \Omega \right|^2 e^{-2\lambda}dz^2$. These two functionals differ by a topological invariant and thus have the same critical points, with the same set of conserved quantities.  However one might favor the second one since $\left| \Omega \right|^2 e^{-2\lambda}dz^2$ is a pointwise conformal invariant (unlike  $ H^2 e^{2\lambda}dz^2$).
\end{remark}

One of the advantages of this formulation is that it describes conveniently how these conserved quantities change under the action of diffeomorphisms.

\begin{theo}
\label{exchangelawgeneral}
Let $\Phi \, : \, \D \rightarrow \R^3$ be a Willmore immersion, conformal, of conformal Gauss map $Y$. Let $\mu$ be as in theorem \ref{repartitionquantiteconservees}. 
Let $\varphi \in \mathrm{Conf}\left( \R^3 \cup \{ \infty \} \right)$ and $M \in SO( 4,1)$ associated. Let $Y_\varphi$ be its conformal Gauss map and $\mu_\varphi$ be as in theorem \ref{repartitionquantiteconservees}.
Then $$\mu_\varphi = M\mu M^T.$$
\end{theo}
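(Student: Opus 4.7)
The plan is to reduce the statement to the transformation law for the conformal Gauss map itself, which is stated earlier in the excerpt for immersions into $\R^3$, $\s^3$ and $\Hy^3$. Specifically, the analogue of Proposition \ref{modificationYtransformationconforme} for immersions $\Phi \, : \, \D \rightarrow \R^3$ asserts that the conformal Gauss map $Y_\varphi$ of $\varphi \circ \Phi$ satisfies
$$Y_\varphi = M Y,$$
where $M \in SO(4,1)$ is the matrix associated to $\varphi \in \mathrm{Conf}(\R^3 \cup \{\infty\})$. Since $M$ is a constant matrix (independent of the point on $\D$), differentiating this identity coordinatewise in the conformal chart yields $\nabla Y_\varphi = M \nabla Y$.

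The second step is a direct matrix computation. Using the definition $\mu = \nabla Y Y^T - Y \nabla Y^T$ one computes
$$\mu_\varphi = \nabla Y_\varphi\, Y_\varphi^T - Y_\varphi\, \nabla Y_\varphi^T = (M\nabla Y)(MY)^T - (MY)(M\nabla Y)^T.$$
Since $(MA)^T = A^T M^T$ for any matrix $A$, this becomes
$$\mu_\varphi = M\, \nabla Y\, Y^T M^T - M\, Y\, \nabla Y^T M^T = M\left( \nabla Y\, Y^T - Y \nabla Y^T \right)M^T = M \mu M^T,$$
which is the claim.

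A minor subtlety to flag is that $\mu$ is, strictly speaking, a matrix whose entries are gradients (i.e.\ an $\R^2$-valued $5\times 5$ matrix, or equivalently a pair of real $5\times 5$ matrices corresponding to the components $\partial_x$ and $\partial_y$). The identity above nevertheless goes through componentwise, because the linear action of $M$ on $\R^{4,1}$ commutes with taking partial derivatives in the domain. I do not anticipate any genuine obstacle: once the transformation law $Y_\varphi = MY$ is in hand, the entire statement is a one-line algebraic identity, and the only point that requires a sentence of justification is the constancy of $M$ with respect to the parameter in $\D$.
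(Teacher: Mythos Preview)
Your proof is correct and follows exactly the same route as the paper: invoke the transformation law $Y_\varphi = MY$ for the conformal Gauss map (the $\R^3$ analogue of Proposition~\ref{modificationYtransformationconforme}), differentiate using that $M$ is constant, and conclude by the one-line matrix identity. There is nothing to add.
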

\begin{proof}
Using proposition \ref{modificationYtransformationconforme} one has $Y_\varphi = M Y$ and since
 $$\mu_{\varphi} =  Y_\varphi \left( \nabla Y_\varphi \right)^T - \nabla Y_\varphi \left( Y_\varphi \right)^T = M \left( Y \nabla Y^T - \nabla Y Y^T \right) M^T=  M \mu M^T.$$
\end{proof}

As an example theorem \ref{exchangelawgeneral} yields an alternative proof of a result by A. Michelat and T. Rivière in \cite{michelatclassifi} that describes the exchange laws of conserved quantities under the action of the inversion at the origin.

\begin{cor}
\label{laloidechange}
Let $\Phi \, : \, \D \rightarrow \R^3$ be a Willmore immersion, conformal, of conformal Gauss map $Y$.
Let $\iota \, : \, x \mapsto \frac{x}{|x|^2}$ be the inversion at the origin. Let $V_{*, \iota}$ be the conserved quantity corresponding to the transformation $*$ for $\iota \circ \Phi$.
Then 
$$ \begin{aligned}
V_{\mathrm{tra}, \, \iota } &= V_{\mathrm{inv}} \\
V_{\mathrm{inv}, \, \iota } &= V_{\mathrm{tra}} \\
V_{\mathrm{dil}, \, \iota} &= - V_{\mathrm{dil}} \\
\tilde V_{\mathrm{rot}, \, \iota} &= \tilde V_{\mathrm{rot}}.
\end{aligned}$$
\end{cor}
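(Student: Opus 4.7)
The strategy is to apply Theorem \ref{exchangelawgeneral} with $\varphi = \iota$, then read off the exchange laws block by block by comparing $M_\iota \mu M_\iota^T$ with the block decomposition of $\mu_\iota$ provided by Theorem \ref{repartitionquantiteconservees}. Everything should reduce to a bookkeeping of signs, since $M_\iota$ is diagonal.

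First, I recall from (\ref{reprinversion}) that the element of $SO(4,1)$ associated to $\iota$ is
$$M_\iota = \begin{pmatrix} -\mathrm{Id}_3 & 0 & 0 \\ 0 & 1 & 0 \\ 0 & 0 & -1 \end{pmatrix},$$
which is diagonal with entries $(-1,-1,-1,1,-1)$, and in particular satisfies $M_\iota = M_\iota^T$. Thus conjugation $M \mapsto M_\iota M M_\iota^T$ acts on an arbitrary entry $\mu_{ij}$ by multiplication by $\epsilon_i \epsilon_j$ where $\epsilon = (-1,-1,-1,1,-1)$.

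Second, I apply this to the block decomposition of $2\mu$ given in Theorem \ref{repartitionquantiteconservees}. The upper $3\times 3$ block $U$ is multiplied by $(-1)(-1)=+1$, hence is unchanged. The block at position $(1{:}3,4)$ (namely $-\tfrac12(V_{\mathrm{tra}} - V_{\mathrm{inv}})$) is multiplied by $(-1)(+1) = -1$; the block at $(1{:}3,5)$ (namely $\tfrac12(V_{\mathrm{tra}} + V_{\mathrm{inv}})$) is multiplied by $(-1)(-1) = +1$; and the $(4,5)$ entry $V_{\mathrm{dil}}$ is multiplied by $(+1)(-1) = -1$. Third, by Theorem \ref{exchangelawgeneral}, the resulting matrix equals $2\mu_\iota$, and I match blocks against the decomposition of $\mu_\iota$ furnished by Theorem \ref{repartitionquantiteconservees}:
\begin{itemize}
\item $U$ unchanged gives $\tilde V_{\mathrm{rot},\iota} = \tilde V_{\mathrm{rot}}$;
\item The sign flips in the two off-diagonal column blocks give the two linear equations
\[
V_{\mathrm{tra},\iota} + V_{\mathrm{inv},\iota} = V_{\mathrm{tra}} + V_{\mathrm{inv}}, \qquad V_{\mathrm{tra},\iota} - V_{\mathrm{inv},\iota} = -(V_{\mathrm{tra}} - V_{\mathrm{inv}}),
\]
whose solution is $V_{\mathrm{tra},\iota} = V_{\mathrm{inv}}$ and $V_{\mathrm{inv},\iota} = V_{\mathrm{tra}}$;
\item The sign flip on the $(4,5)$ entry yields $V_{\mathrm{dil},\iota} = -V_{\mathrm{dil}}$.
\end{itemize}

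There is essentially no analytic obstacle here; the substance is already carried by Theorems \ref{exchangelawgeneral} and \ref{repartitionquantiteconservees}. The only point worth double-checking is the sign conventions in the block decomposition of $\mu$ (in particular the opposite signs in front of $V_{\mathrm{tra}} - V_{\mathrm{inv}}$ versus $V_{\mathrm{tra}} + V_{\mathrm{inv}}$), since these are what produce the swap of $V_{\mathrm{tra}}$ and $V_{\mathrm{inv}}$ rather than merely sending each to its negative. Once that is verified, the proof is a one-line verification.
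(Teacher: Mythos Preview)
Your proof is correct and follows exactly the approach of the paper: apply Theorem \ref{exchangelawgeneral} with $M = M_\iota$ from (\ref{reprinversion}), then read off the block decomposition via Theorem \ref{repartitionquantiteconservees}. The paper states this in one line without carrying out the sign-tracking; you have simply filled in those details, and they are correct.
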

\begin{proof}

One need only apply theorem \ref{exchangelawgeneral} with $\varphi = \iota$ and $M = M_\iota = \begin{pmatrix} - Id &0&0 \\0&1&0 \\ 0&0&-1 \end{pmatrix}$ (see (\ref{reprinversion})), and interpret the result with theorem \ref{repartitionquantiteconservees}.

\end{proof}

On non simply-connected domains, each conserved quantity yields a corresponding residual which follow the exchange law presented in corollary \ref{laloidechange}. The exchange law of residuals was in fact the result obtained by A. Michelat and T. Rivière in \cite{michelatclassifi} and served as a key stepping stone for their classification of branched Willmore spheres.

\subsection{Conformal dual immersion }
As was pointed out in conclusion of subsection \ref{geometryofconformalGaussmaps}, a sufficient condition for $X^*$ to be conformal is $X$ Willmore. In that case $Y$ is the conformal Gauss map of $X^*$.

\begin{theo}
Let $X \, : \, \D \, \rightarrow  \s^3$ be a Willmore immersion, conformal, of conformal Gauss map $Y$. 
Then there exists a branched conformal Willmore immersion  $X^* \, : \,  \D \, \rightarrow \s^3$ such that $Y$ is the conformal Gauss map  of $X^*$. Then
$X^*$ is called the conformal dual immersion of $X$.
\end{theo}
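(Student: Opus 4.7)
The plan is to verify that the explicit candidate $X^*$ produced by Theorem \ref{dualsurfaceX} does all the work, once the Willmore hypothesis is fed in. No new construction is needed; everything follows by assembling the formulas from Section \ref{geometryofconformalGaussmaps}.

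First, Theorem \ref{dualsurfaceX} gives the explicit expression of $X^* \colon \D \to \s^3$ (well defined wherever the denominator $T(X) = |\omega|^2(1+h^2) + 4|h_z|^2 e^{2\Lambda}$ does not vanish) together with the enveloping identities $\langle Y, p(X^*)\rangle = 0$ and $\langle \nabla Y, p(X^*)\rangle = 0$. Next, formula (\ref{Xstarconforme?}) reads
\begin{equation*}
\langle X^*_z, X^*_z \rangle \;=\; \frac{4\omega|\omega|^2}{T(X)^2}\, \mathcal{W}_{\s^3}(X) \left( \left(\tfrac{\omega_z}{\omega}\right)_{\bar z} + \tfrac{h^2+1}{4}e^{2\Lambda} \right),
\end{equation*}
and since $X$ is Willmore, $\mathcal{W}_{\s^3}(X)=0$; consequently $\langle X^*_z, X^*_z\rangle = 0$, so $X^*$ is a conformal map (with possible branch points at the zeroes of its conformal factor).

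Second, on the open set where $X^*$ is a genuine conformal immersion with nowhere-dense umbilic locus, the uniqueness result of Theorem \ref{Yseulecongruenceenveloppante} applies: $Y$ is itself conformal, it envelopes $X^*$, hence $\pm Y$ coincides with the conformal Gauss map of $X^*$, and choosing the orientation appropriately gives exactly $Y$. Since $X$ is Willmore, $Y$ is minimal by the equivalence recalled at the beginning of this section, and then the same equivalence applied to $X^*$ shows that $X^*$ is Willmore, a fortiori conformal Willmore in the sense of Definition \ref{conformalWillmoreimmersion}.

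The main delicate point, and therefore the place to concentrate the write-up, is the branched character of $X^*$: the formula of Theorem \ref{dualsurfaceX} degenerates where $T(X)=0$, and the conformal factor of $X^*$ (read off from formula (\ref{Xstarconforme?})) can in principle vanish as well. One has to verify that these exceptional loci are isolated — hence $X^*$ extends across them as a branched conformal immersion — and that the umbilic set of $X^*$ remains nowhere dense so that Theorem \ref{Yseulecongruenceenveloppante} genuinely applies. Both facts follow from the real-analyticity of Willmore immersions together with a unique-continuation argument: if $X^*$ were umbilic on an open set, the identity $\mathcal{W}_{\s^3}(X)=0$ combined with the explicit formulas for $H_{\nu^*}$ and $\Omega_{\nu^*}$ in (\ref{Hnustarbase}) and (\ref{omeganustarbase}) would propagate a rigid constraint on $X$ itself, which can be excluded outside trivial cases.
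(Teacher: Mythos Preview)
Your proof follows exactly the paper's argument: take $X^*$ from Theorem \ref{dualsurfaceX}, use (\ref{Xstarconforme?}) with $\mathcal{W}_{\s^3}(X)=0$ to get conformality, and invoke Theorem \ref{Yseulecongruenceenveloppante}. The paper's own proof is a two-line sketch that does precisely this and nothing more; your additional discussion of the Willmore property of $X^*$ (via minimality of $Y$) and of the branching locus goes beyond what the paper writes, though your unique-continuation paragraph is more a heuristic than a proof and the paper simply does not address those issues at all.
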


\begin{proof}
Taking $X^*$ as in theorem \ref{dualsurfaceX}, and recalling (\ref{Xstarconforme?}) with $X$ Willmore, one finds $X^*$ conformal and enveloped by $Y$. Theorem \ref{Yseulecongruenceenveloppante} concludes.

\end{proof}

Another way to see this is to understand that $Y$ minimal means there are two isotropic directions in which $Y$ has zero mean curvature, meaning $Y$ is the conformal Gauss map of two immersions, according to theorem \ref{conditionintegrabilite}. One is $X$, the other is its conformal dual.

\subsection{Bryant's functional }

R. Bryant introduced in his seminal paper \cite{bibdualitytheorem} a holomorphic quantity with far-reaching properties whose study has proven fertile. 

\begin{de}
Let $X \, : \, \D \rightarrow \s^3$ be a conformal immersion of representation $\Phi$ in $\R^3$ and $Z$ in $\Hy^3$ and of conformal Gauss map $Y$.
The Bryant functional of $X$ (respectively $\Phi$, $Z$) is defined  as 

$$\mathcal{Q} := \left\langle Y_{zz} , Y_{zz} \right\rangle.$$
\end{de}

In fact R. Bryant introduced the quartic $ \left\langle \partial^2 Y , \partial^2 Y\right\rangle = \mathcal{Q} dz^4$. For our purposes studying $\mathcal{Q}$ is enough.

One can draw a parallel between constant mean curvature immersions and Willmore immersions. Indeed while for a CMC immersion, the Gauss map is minimal, for a Willmore immersion the conformal Gauss map is. The Bryant functional allows us to further this comparison, as it is analogous to the Hopf functional. While the Hopf functional of a CMC immersion is holomorphic, the Bryant's functional (or Byrant's quartic) of a Willmore functional is holomorphic.

\begin{prop}
\label{XwillmoreimpliesQholomorphic}
If $X$ is Willmore then $\mathcal{Q}$ is holomorphic.
\end{prop}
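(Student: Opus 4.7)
The plan is to show $\partial_{\bar z}\mathcal{Q} = 0$ by expanding
\[
\partial_{\bar z}\mathcal{Q} = \partial_{\bar z}\langle Y_{zz}, Y_{zz}\rangle = 2\langle Y_{zz\bar z}, Y_{zz}\rangle,
\]
and exploiting the minimality of $Y$ (which holds precisely because $X$ is Willmore, by the theorem stated earlier in the section). Concretely, since $X$ Willmore is equivalent to $Y_{z\bar z} + \langle Y_z, Y_{\bar z}\rangle Y = 0$, I would differentiate this identity in $z$ to obtain
\[
Y_{zz\bar z} = -\partial_z\langle Y_z, Y_{\bar z}\rangle\, Y - \langle Y_z, Y_{\bar z}\rangle\, Y_z.
\]

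Substituting this into $\langle Y_{zz\bar z}, Y_{zz}\rangle$ leaves two terms, each of which I expect to vanish from the structural conditions on $Y$. The first term is proportional to $\langle Y, Y_{zz}\rangle$: differentiating $\langle Y, Y\rangle = 1$ twice gives $\langle Y_{zz}, Y\rangle + \langle Y_z, Y_z\rangle = 0$, and conformality of $Y$ forces $\langle Y_z, Y_z\rangle = 0$, so $\langle Y, Y_{zz}\rangle = 0$. The second term is proportional to $\langle Y_z, Y_{zz}\rangle$, which is just $\tfrac{1}{2}\partial_z\langle Y_z, Y_z\rangle = 0$ again by conformality.

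Hence both contributions to $\langle Y_{zz\bar z}, Y_{zz}\rangle$ vanish, giving $\partial_{\bar z}\mathcal{Q} = 0$ as desired. The proof is essentially a two-line computation once the minimality equation is in hand; I do not anticipate a genuine obstacle, only the need to have the pair of identities $\langle Y, Y\rangle = 1$ and $\langle Y_z, Y_z\rangle = 0$ ready to kill the remainder terms. The only mild subtlety is that everything takes place in the Lorentzian inner product of $\R^{4,1}$, but this plays no role beyond bilinearity, since the sign structure is absorbed into the identities on $Y$ themselves.
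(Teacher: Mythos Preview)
Your proof is correct and follows essentially the same route as the paper's: differentiate the minimal surface equation $Y_{z\bar z} = -\langle Y_z, Y_{\bar z}\rangle Y$ in $z$, then kill the two resulting terms using $\langle Y_{zz}, Y\rangle = 0$ and $\langle Y_{zz}, Y_z\rangle = 0$, both of which follow from $\langle Y, Y\rangle = 1$ and conformality. The only cosmetic difference is that the paper obtains $\langle Y_{zz}, Y\rangle = 0$ by differentiating $\langle Y_z, Y\rangle = 0$ once rather than $\langle Y, Y\rangle = 1$ twice, which amounts to the same thing.
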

\begin{proof}
If $X$ is Willmore then necessarily $Y_{z\zb} = - {\left\langle Y_z, Y_{\zb} \right\rangle } Y$, and then 

$$Y_{zz \zb} = \left( Y_{z \zb} \right)_z = - {\left(\left\langle Y_z, Y_{\zb} \right\rangle \right)_z} Y  - {\left\langle Y_z, Y_{\zb} \right\rangle }Y_z$$

and since $Y$ is conformal $$ \left\langle Y_{zz}, Y_z \right\rangle = \frac{1}{2} \left( \left\langle Y_z, Y_z \right\rangle \right)_z = 0,$$

and $$\left\langle Y_{zz}, Y \right\rangle = \left( \left\langle Y_z, Y \right\rangle \right)_z - \left\langle Y_z, Y_z \right\rangle = 0.$$

Then $$\mathcal{Q}_{\zb} =2 \left\langle Y_{zz\zb}, Y_{zz} \right\rangle=0.$$
\end{proof}

Using expression (\ref{Yzz}) in any orthonormal isotropic frame $\left( \nu, \nu^* \right)$ (that is satisfying $\left\langle \nu, \nu^*\right\rangle = -1$) of the normal bundle of $Y$ : 

$$Y_{zz} = 2 \mathcal{L}_z Y_z - \frac{\Omega_\nu}{2} \nu^* - \frac{\Omega_{\nu^*} }{2} \nu,$$

one finds 

\begin{equation}
\label{qtorduetomega}
\mathcal{Q} = -\frac{ \Omega_{\nu} \Omega_{\nu^*} }{2}.
\end{equation}

Taking $\nu$ and $\nu^*$ as in susection \ref{geometryofconformalGaussmaps} and using (\ref{omeganubase}) and (\ref{omeganustarbase}) further yields

\begin{equation}
\label{qtorduemega}
\begin{aligned}
\mathcal{Q} &= \omega^2 e^{-2 \Lambda}  \left( \left(\frac{\omega_z}{\omega} \right)_{\zb} + \frac{h^2+1 }{4} e^{2 \Lambda}  \right) \\
&= \left( \omega_{z\zb} \omega - \omega_z \omega_{\zb}  \right) e^{-2\Lambda} + \omega^2 \frac{h^2+1}{4}.
\end{aligned}
\end{equation}

The converse of proposition \ref{XwillmoreimpliesQholomorphic} is not true. 
\begin{prop}
\label{Qtorduholomorphic}
$\mathcal{Q}$ is holomorphic if and only if  there exists a holomorphic function $f$ on $\D$ such that  
\begin{equation}
\label{Qtorduholoequ}
\mathcal{W}_{\s^3} \left( X \right) = \omega \overline{f} e^{-2\Lambda}.
\end{equation}
\end{prop}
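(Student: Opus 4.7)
My plan is to derive a pointwise identity expressing $\mathcal{Q}_{\bar z}$ in terms of $\mathcal{W}_{\s^3}(X)$, and then read both implications off from it. Concretely, the identity I would aim for is
\begin{equation}
\label{identityplan}
\mathcal{Q}_{\bar z} \;=\; \omega^2 e^{-2\Lambda}\left(\frac{\mathcal{W}_{\s^3}(X)\, e^{2\Lambda}}{\omega}\right)_{\!z}.
\end{equation}
Granted \eqref{identityplan}, the $(\Leftarrow)$ direction is immediate: if $\mathcal{W}_{\s^3}(X) = \omega\, \bar f\, e^{-2\Lambda}$ with $f$ holomorphic, then $\frac{\mathcal{W}_{\s^3}(X)\, e^{2\Lambda}}{\omega} = \bar f$ is antiholomorphic, so $\mathcal{Q}_{\bar z} = 0$. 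For $(\Rightarrow)$, on the (open, dense) complement of the umbilic locus $\{\omega = 0\}$, vanishing of $\mathcal{Q}_{\bar z}$ forces $\frac{\mathcal{W}_{\s^3}(X)\, e^{2\Lambda}}{\omega}$ to be antiholomorphic; writing it as $\bar f$ exhibits the desired $f$. Because $\mathcal{W}_{\s^3}(X)$ is smooth, the ratio is locally bounded near isolated zeros of $\omega$, so Riemann's removable singularities theorem extends $f$ holomorphically across the umbilic points (and the totally-umbilic case $\omega\equiv 0$ is trivial since then $\mathcal{W}_{\s^3}(X) \equiv 0$).

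To prove \eqref{identityplan}, I would start from the explicit formula \eqref{qtorduemega},
$$\mathcal{Q} = \left(\omega_{z\bar z}\,\omega - \omega_z \omega_{\bar z}\right)e^{-2\Lambda} + \omega^2\, \frac{h^2+1}{4},$$
and differentiate in $\bar z$. The Codazzi relation $\omega_{\bar z} = h_z e^{2\Lambda}$ (see the appendix reference \ref{Gauss-Codazzis3}) lets me replace every appearance of $\omega_{\bar z}$, $\omega_{z\bar z}$, and $\omega_{z\bar z\bar z}$ by expressions in $h, \Lambda$ and their derivatives. A first round of cancellations eliminates the terms that do not involve $h_{z\bar z}$ or $h_{zz\bar z}$, leaving an expression linear in $W := h_{z\bar z} + \tfrac{|\omega|^2 e^{-2\Lambda}}{2}h = \mathcal{W}_{\s^3}(X)$, $W_z$, and a leftover coefficient of $\omega h_z$ proportional to $-\tfrac{|\omega|^2 e^{-2\Lambda}}{2} + 2\Lambda_{z\bar z} + \tfrac{e^{2\Lambda}(h^2+1)}{2}$. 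The Gauss equation in $\s^3$, which takes the form $\Lambda_{z\bar z} = -\tfrac{e^{2\Lambda}(1+h^2)}{4} + \tfrac{|\omega|^2 e^{-2\Lambda}}{4}$, kills this leftover exactly. The remaining combination is precisely $\omega W_z - \omega_z W + 2\omega\Lambda_z W$, which assembles into the right-hand side of \eqref{identityplan}.

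The main obstacle is purely bookkeeping: several terms involving $h_{zz}$, $\omega_z h_z$, $\omega h_{\bar z}$, and $\Lambda_z$ must cancel in a very specific way, and those cancellations rely on simultaneously using Codazzi (to convert $\bar z$-derivatives of $\omega$ into $z$-derivatives of $h$) and Gauss (to rewrite $\Lambda_{z\bar z}$). Once the identity \eqref{identityplan} is in hand, the stated equivalence follows at once by the argument in the first paragraph.
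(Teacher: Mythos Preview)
Your approach is correct and arrives at the same key identity as the paper, namely $\mathcal{Q}_{\bar z}=\omega W_z-\omega_z W+2\omega\Lambda_z W$ with $W=\mathcal{W}_{\s^3}(X)$, but the route is genuinely different. The paper computes $\mathcal{Q}_{\bar z}=2\langle (Y_{z\bar z})_z, Y_{zz}\rangle$ directly from the normal-frame decomposition $Y_{z\bar z}=\mathcal{W}_{\s^3}(X)\,\nu-\tfrac{|\omega|^2e^{-2\Lambda}}{2}Y$ and then only needs to evaluate $\langle\nu,Y_{zz}\rangle$ and $\langle\nu_z,Y_{zz}\rangle$; the Gauss equation never appears explicitly because the structure is already packaged in the frame relations. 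Your method instead differentiates the closed formula \eqref{qtorduemega} and relies on both Codazzi and Gauss to force the cancellations, which is heavier bookkeeping but has the virtue of being entirely intrinsic to the data $(\omega,h,\Lambda)$ without invoking the ambient $\R^{4,1}$ frame. One caveat: in your $(\Rightarrow)$ paragraph, the sentence ``Because $\mathcal{W}_{\s^3}(X)$ is smooth, the ratio is locally bounded near isolated zeros of $\omega$'' is not justified as written, since smoothness of the numerator alone does not control $W e^{2\Lambda}/\omega$ near a zero of $\omega$. The paper sidesteps this by working (implicitly) away from umbilic points; if you want to keep your extension across umbilics, you would need an additional argument.
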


\begin{proof}

We once again use the notations of subsection \ref{formulas41} with $\nu$ and $\nu^*$ defined in (\ref{onfixenu}) and (\ref{onfixenustar}).
Then as before 
 $$\mathcal{Q}_{\zb} =2 \left\langle Y_{zz\zb}, Y_{zz} \right\rangle,$$ and using (\ref{Hnubase})  (\ref{Hnustarbase}) and  (\ref{Yzzb}) :

$$ \begin{aligned} \mathcal{Q}_{\zb} &= 2 \left\langle\left( \mathcal{W}_{\s^3} (X) \nu - \frac{|\omega|^2 e^{-2\Lambda}}{2} Y \right)_z , Y_{zz} \right\rangle \\
&=2\mathcal{W}_{\s^3} (X) \left\langle \nu_z , Y_{zz} \right\rangle   + 2 \left(\mathcal{W}_{\s^3}(X) \right)_z \left\langle \nu, Y_{zz} \right\rangle.  \end{aligned} $$

Using (\ref{YzzX}) and $\nu = \begin{pmatrix} X \\1 \end{pmatrix}$ yields 

$$  \left\langle \nu_z , Y_{zz} \right\rangle = - \frac{1}{2} \left(\omega e^{-2\Lambda} \right)_ze^{2 \Lambda}.$$

Further by (\ref{omeganubase}) $ \left\langle \nu, Y_{zz} \right\rangle = \frac{\omega}{2}$. Hence

$$\begin{aligned}
\mathcal{Q}_{\zb} &=\left(\mathcal{W}_{\s^3}(X) \right)_z \omega - \mathcal{W}_{\s^3} (X)  \left(\omega e^{-2\Lambda} \right)_ze^{2 \Lambda} \\
&= e^{2\Lambda} \omega^2 \left( \frac{ \mathcal{W}_{\s^3}(X) }{ \omega e^{-2\Lambda} } \right)_z.
\end{aligned}$$

To conclude $\mathcal{Q}$ holomorphic implies  $\left( \frac{ \mathcal{W}_{\s^3}(X) }{ \omega e^{-2\Lambda} } \right)_z = 0$, which means there exists $f$ holomorphic such that 

$$\frac{ \mathcal{W}_{\s^3}(X) }{ \omega e^{-2\Lambda} }  = \overline{f}$$
which concludes the proof.
\end{proof}

This result follow from the work of C. Bohle (see \cite{bobohle}). A. Michelat found an equivalent condition in \cite{michelatbis}.

Proposition \ref{Qtorduholomorphic} bears striking resemblance to the definition \ref{conformalWillmoreimmersion} of conformal Willmore immersions,  with the added condition that $\overline{f} \omega \in \R$. This might be better understood with the notion of isothermic immersions, which we study in the fashion of T. Rivière ((I.4) in \cite{isothermicimmersions}).

\begin{de}
\label{defisothermicimmersions}
A conformal immersion $\Phi$ of the disk $\D$ into $\R^3$ (or equivalently $X$ into $\s^3$) is said to be isothermic if around each point of $\D$ there exists a local conformal reparametrization such that $\Omega \in \R$ (equivalently $\omega \in \R$). Such a parametrization will be called isothermic, or in isothermic coordinates.
\end{de}

Isothermic immersions can be conveniently caracterized (Proposition I.1 in \cite{isothermicimmersions}).

\begin{prop}
\label{propositionisothermicimmersions}
A conformal immersion $\Phi$ of the disk $\D$ into $\R^3$ (or equivalently $X$ into $\s^3$) is isothermic if and only if there exists a non zero holomorphic function $F$ on $\D$ such that $$\Im \left( \overline{F}  \Omega \right) =0.$$
Equivalently  $X$ is isothermic if and only if there exists a non zero holomorphic function $f$ on $\D$ such that $$\Im \left( \overline{f}  \omega \right) =0.$$
In fact away from its zeros, $\sqrt{f}$ yields the conformal reparametrization into isothermic coordinates.
\end{prop}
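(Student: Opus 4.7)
The plan is to reduce everything to the transformation rule for $\Omega$ under a holomorphic change of coordinates and read off both directions as algebraic consequences. First, if $w = \psi(z)$ is a holomorphic change of parameter with $\psi'\neq 0$, the chain rule gives $\tilde\Phi_{ww} = \Phi_{zz}(z_w)^2 + \Phi_z\,z_{ww}$; taking the inner product with $\n$ and using $\langle \Phi_z,\n\rangle = 0$ yields the transformation law
$$\tilde\Omega(w) = 2\langle \tilde\Phi_{ww},\n\rangle = \Omega(z)(z_w)^2 = \frac{\Omega(z)}{\psi'(z)^2}.$$
In particular $\Omega\,dz^2$ is a $(2,0)$-form on $\D$. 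This single identity is the engine of the whole proposition.

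For the forward direction, suppose $\Phi$ is isothermic and let $w=\psi(z)$ be an isothermic chart around a given point, so $\tilde\Omega\in\R$. Setting $F(z) := \psi'(z)^2$ (holomorphic and nonvanishing), the transformation law gives $\overline{F(z)}\,\Omega(z) = |\psi'(z)|^4\,\tilde\Omega(w) \in \R$, whence $\Im(\overline{F}\Omega)=0$ on the chart. To upgrade to a global $F$ on $\D$: if $F_1$, $F_2$ arise from two overlapping isothermic charts, then $F_1/F_2$ is holomorphic and $\R$-valued wherever $\Omega\neq 0$ (both $\overline{F_i}\Omega$ being real), hence a real constant on each overlap component; since $\D$ is simply connected, one can rescale the local pieces so as to patch them into a single holomorphic $F$ on all of $\D$.

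For the converse, suppose $F$ is a holomorphic nonvanishing function with $\Im(\overline{F}\Omega)=0$. On any simply connected region avoiding the zero set of $F$, choose a branch of $\sqrt{F}$ and set
$$\psi(z) := \int^z \sqrt{F(\zeta)}\,d\zeta,$$
so that $\psi'(z) = \sqrt{F(z)}\neq 0$ and $w=\psi(z)$ is a valid local conformal change. By the transformation law,
$$\tilde\Omega(w) = \frac{\Omega(z)}{\psi'(z)^2} = \frac{\Omega(z)}{F(z)} = \frac{\overline{F(z)}\,\Omega(z)}{|F(z)|^2} \in \R,$$
using precisely the hypothesis. This is exactly the isothermic property around any point where $F(z)\neq 0$, and simultaneously proves the final sentence of the proposition about $\sqrt{f}$ being the reparametrization.

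The equivalence between the $\Phi$-version on $\R^3$ and the $X$-version on $\s^3$ follows because $X = \pi^{-1}\circ\Phi$ is a conformal transform: $\omega$ and $\Omega$ transform as $(2,0)$-forms that differ by the squared conformal factor of $\pi^{-1}$, a positive real function, so $\Im(\overline{F}\Omega)=0$ for some holomorphic $F$ if and only if $\Im(\overline{f}\omega)=0$ for some holomorphic $f$ (one may take $f=F$ since multiplication of $\Omega$ by a positive real factor does not affect the condition). The only mildly delicate point is the gluing of local $F$'s into a global one; everything else is a one-line chain-rule computation.
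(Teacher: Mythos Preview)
The paper does not actually prove this proposition: it is quoted as Proposition~I.1 of \cite{isothermicimmersions} and stated without argument. So there is no ``paper's own proof'' to compare against, and your write-up supplies what the paper omits.

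Your argument is correct and is the standard one. The key point is exactly the transformation law $\tilde\Omega=\Omega/(\psi')^2$ for the Hopf differential under a holomorphic reparametrisation, and both directions follow from it as you say. The equivalence between the $\R^3$ and $\s^3$ statements via $\omega=\dfrac{2}{1+|\Phi|^2}\,\Omega$ (a positive real multiple) is also fine.

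Two minor points deserve a word of caution. First, your gluing of the local $F$'s into a single holomorphic $F$ on $\D$ is a little brisk: the ratio $F_1/F_2$ is only forced to be real where $\Omega\neq 0$, so you are implicitly using that the complement of the umbilic set is connected (or that the umbilic set is thin enough for the constants to propagate). On $\D$ this is harmless once stated, but it is worth making explicit. Second, in the converse you only produce isothermic coordinates away from the zeros of $F$; this matches the last sentence of the proposition, but strictly speaking the definition asks for isothermic coordinates around \emph{every} point. At an isolated zero of $F$ the condition $\Im(\overline F\,\Omega)=0$ gives no information, so one either restricts the claim to the complement of the zero set (as the proposition implicitly does) or invokes a separate argument there. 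Neither issue affects the way the proposition is used later in the paper.
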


Then (\ref{Qtorduholoequ}) not only yields that $X$ is conformal Willmore, but either $f$ is null  and then $X$ is Willmore, or  there exists a non null holomorphic $f$ such that $\overline{f} \omega \in \R$, that is $\Im \left( \overline{f}  \omega \right) =0$ i.e. $X$ is isothermic.

\begin{cor}
\label{corqtorduwillmconforme}
If $\mathcal{Q}$ is holomorphic then either $X$ is Willmore, or $X$ is conformal Willmore and isothermic.
\end{cor}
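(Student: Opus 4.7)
The plan is to combine Proposition \ref{Qtorduholomorphic} with the reality of $\mathcal{W}_{\s^3}(X)$ and then split cases on whether the associated holomorphic function vanishes identically.

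First, assuming $\mathcal{Q}$ holomorphic, I would invoke Proposition \ref{Qtorduholomorphic} to produce a holomorphic function $f$ on $\D$ satisfying
$$\mathcal{W}_{\s^3}(X) = \overline{f}\,\omega\, e^{-2\Lambda}.$$
The key observation is that, by the definition of $\mathcal{W}_{\s^3}(X)$ given earlier (namely $\mathcal{W}_{\s^3}(X) = h_{z\bar z} + \tfrac{|\omega|^2 e^{-2\Lambda}}{2}h$), the left-hand side is real-valued. Hence $\overline{f}\,\omega\, e^{-2\Lambda}$ is real, i.e.\ $\Im(\overline{f}\omega) = 0$.

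Next I would split into two cases depending on $f$. If $f \equiv 0$, then $\mathcal{W}_{\s^3}(X) = 0$, which by definition (and via the equivalence in (\ref{wtorduXPhi})) means $X$ is a Willmore immersion. If instead $f \not\equiv 0$, then since $f$ is a nonzero holomorphic function with $\Im(\overline{f}\omega)=0$, Proposition \ref{propositionisothermicimmersions} applies directly to yield that $X$ is isothermic. Moreover, the relation $\mathcal{W}_{\s^3}(X) = \overline{f}\omega e^{-2\Lambda}$, with the right-hand side being real, automatically gives $\mathcal{W}_{\s^3}(X) = \Re(\overline{f}\omega e^{-2\Lambda})$, so $X$ is conformal Willmore in the sense of Definition \ref{conformalWillmoreimmersion}.

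There is no real obstacle here: the statement is essentially a reading of Proposition \ref{Qtorduholomorphic} through the lens of the isothermic criterion of Proposition \ref{propositionisothermicimmersions}, once one has recorded that $\mathcal{W}_{\s^3}(X)$ is automatically real. The only subtle point to double-check is that the dichotomy $f\equiv 0$ versus $f\not\equiv 0$ is exhaustive on $\D$, which is immediate since $f$ is holomorphic on a connected domain.
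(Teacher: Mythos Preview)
Your proof is correct and follows essentially the same route as the paper: invoke Proposition \ref{Qtorduholomorphic}, use the reality of $\mathcal{W}_{\s^3}(X)$ to get $\Im(\overline{f}\omega)=0$, and split on whether $f\equiv 0$ (Willmore) or not (isothermic via Proposition \ref{propositionisothermicimmersions} and conformal Willmore by definition). If anything, you are slightly more explicit than the paper in spelling out why $\mathcal{W}_{\s^3}(X)$ is real and how the conformal Willmore conclusion follows.
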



\section{Conformally constant mean curvature immersions}

Let $X \, : \, \D \rightarrow \s^3$ of representation $\Phi$ in $\R^3$, $Z$ in $\Hy^3$ without umbilic points and of conformal Gauss map $Y$. In this section our aim is to find a necessary and sufficient condition to have one of the three representations be conformally CMC in its immersion space. 

Let us first focus on finding a set of necessary conditions. Thanks to theorem  \ref{conformementCMC}, we know it is equivalent to  the fact that $Y$ lies in a hyperplane of $\R^{4,1}$. That is there exists constants $v \in \R^{4,1}\backslash \{ 0 \}$ and $\eta \in \R$ such that  \begin{equation} \label{conditionhyperplanveta}\left\langle Y, v \right\rangle =  \eta . \end{equation}  
Since $v$ and $\eta$ are constants, differentiating (\ref{conditionhyperplanveta}) yields 
\begin{equation} \label{conditionhyperplanvetaYz}\left\langle Y_z, v \right\rangle =  0 \end{equation} 
and
\begin{equation} \label{conditionhyperplanvetaYzb}\left\langle Y_{\zb}, v \right\rangle =  0. \end{equation} 
One can write $v$ in the moving frame $(Y, Y_z, Y_{\zb}, \nu, \nu^*)$ with $\nu$ and $\nu^*$ defined in (\ref{onfixenu}) and (\ref{onfixenustar}) :
$$v = l Y + m Y_z + n Y_{\zb} + a \nu + b \nu^*.$$
Applying (\ref{conditionhyperplanveta}), (\ref{conditionhyperplanvetaYz}) and (\ref{conditionhyperplanvetaYzb}) yields 
$$\begin{aligned}
l &= \eta \\
m &= 0 \\
n&=0 
\end{aligned}$$
And thus 
\begin{equation}
\label{ondecomposev}
v= \eta Y + a \nu + b \nu^*.
\end{equation}
$v$ can be taken such that $$\left\langle v, v \right\rangle = \kappa = \left\{ \begin{aligned} &1 \text{ if } v \text{ is spacelike} \\ &0 \text{ if } v \text{ is lightlike }  \\ &-1 \text{ if } v \text{ is timelike}. \end{aligned} \right.$$
From this decomposition we will deduce characterizations of $a$ and $b$.
Since $v$ is constant one can differentiate (\ref{ondecomposev}) and put formulas (\ref{nuzformule}) and (\ref{nustarzformule}) to effect :
$$\begin{aligned}
0& = \left( \eta - aH_{\nu} -b H_{\nu^*} \right)Y_z   +\left( a_z - a \left\langle \nu_z,\nu^* \right\rangle \right) \nu  + \left( b_z  - b \left\langle \nu^*_z , \nu \right\rangle \right) \nu^*-\frac{\left( a \Omega_\nu +b \Omega_{\nu^*} \right)}{ |\omega|^2 e^{-2\Lambda} } Y_{\zb} \\
&= \left( \eta + \frac{2b \mathcal{W}_{\s^3} \left(X \right)}{|\omega|^2 e^{-2\Lambda} } \right) Y_z + \left( a_z - a \left\langle \nu_z,\nu^* \right\rangle \right) \nu  + \left( b_z  - b \left\langle \nu^*_z , \nu \right\rangle \right) \nu^*-\frac{\left( a \Omega_\nu +b \Omega_{\nu^*} \right)}{ |\omega|^2 e^{-2\Lambda} } Y_{\zb} 
\end{aligned}$$
with (\ref{Hnubase}) and (\ref{Hnustarbase}). Further since $\left\langle \nu^*_z , \nu \right\rangle = \left( \langle \nu, \nu^* \rangle \right)_z - \langle \nu_z, \nu^* \rangle = \frac{\overline{\omega}_z }{ \overline{\omega}} $, using (\ref{nuznustarbase}), we find 
$$ 0 = \left( \eta + \frac{2b \mathcal{W}_{\s^3} \left(X \right)}{|\omega|^2 e^{-2\Lambda} } \right) Y_z + \left( a_z + a\frac{\overline{\omega}_z }{ \overline{\omega}}  \right) \nu  + \left( b_z  - b\frac{\overline{\omega}_z }{ \overline{\omega}} \right) \nu^*-\frac{\left( a \Omega_\nu +b \Omega_{\nu^*} \right)}{ |\omega|^2 e^{-2\Lambda} } Y_{\zb}.$$
Besides $$\langle v, v \rangle = \eta^2 -2ab,$$
and since $Y$, $\nu$ and $\nu^*$ are bounded in $\R^5$ away from umbilic points, $a,b < \infty$.
Then $a,b$ are real functions and $\eta$ a real constant such that 
\begin{equation}
\label{systemeCMC1}
a_z + a \frac{\overline{\omega}_z}{\overline{\omega}}=0,
\end{equation}
\begin{equation}
\label{systemeCMC2}
b_z-b \frac{\overline{\omega}_z}{\overline{\omega}}=0, 
\end{equation}
\begin{equation}
\label{systemeCMC3}
2b \mathcal{W}_{\s^3} (X) + \eta |\omega|^2 e^{-2\Lambda} = 0,
\end{equation}
\begin{equation}
\label{systemeCMC4}
a \Omega_\nu + b \Omega_{\nu^*} = 0 ,
\end{equation}
\begin{equation}
\label{systemeCMC5}
  ab = -\frac{\left\langle v,v \right\rangle - \eta^2 }{2} \text{ real constant.}
\end{equation}
One can recast (\ref{systemeCMC1}) as $a_z \overline{\omega} +a \overline{\omega}_z = 0$, or rather since $a \in \R$
$$a_{\zb} \omega + a \omega_{\zb} = 0.$$
This yields $$ \left( a \omega \right)_{\zb} = 0,$$
i.e. there exists $f \,: \, \D \rightarrow \C$ holomorphic (since $a \omega < \infty$)  such that  \begin{equation} \label{aomegaf} a \omega = f.\end{equation}
One then has   $\overline{f} \omega = \overline{a \omega} \omega  = a |\omega|^2 \in \R$ since $a \in \R$. Then according to proposition \ref{propositionisothermicimmersions}, unless $f = 0$ on $\D$, $X$ is isothermic.
Working similarly on (\ref{systemeCMC2}) one finds there exists $g$ holomorphic (since $b < \infty$ and $\omega \neq 0$ by hypothesis) on $\D$ such that \begin{equation} \label{bomegaf} b = g \omega. \end{equation} 
Then, if $g$ is not null on $\D$, working away from its zeros yields $$\overline{ \frac{1}{g} } \omega = \frac{|\omega|^2}{\overline{b} } \in \R$$ since $b \in \R$.  Then according to proposition \ref{propositionisothermicimmersions}  $X$ is isothermic.
So unless $f= g= 0$ on $\D$, $X$ is isothermic. If $f=g=0$, then (\ref{systemeCMC3}) ensures $\eta=0$ which in turn yields $v=0$, a case excluded from the start of this reasoning. As a consequence we get our first necessary condition :

\vspace{5mm}
\underline{\textbf{ $X$ is  isothermic.}}
\vspace{5mm}

To go further one can reframe (\ref{systemeCMC3}) in terms of $f$ and $g$. 
Indeed 
$$2b \mathcal{W}_{\s^3} (X) + \eta |\omega|^2 e^{-2\Lambda} = \omega \left(2g \mathcal{W}_{\s^3} (X) + \eta  \overline{\omega} e^{-2\Lambda} \right)$$ with $\omega \neq 0$ ensuring that (\ref{systemeCMC3}) is equivalent to 
$$2 g\mathcal{W}_{\s^3} (X) + \eta  \overline{\omega} e^{-2\Lambda}=0.$$ 
This implies that if $g(z_0)= 0$ for any given $z_0$ in $\D$, then $\eta =0$, and with (\ref{systemeCMC4}) $f(z_0)=0$. So $v(z_0)= 0$ and since $v$ is a constant $v=0$, which is a contradiction. Then $g$ has no zero on $\D$.
Letting $\varphi = \frac{1}{g}$ be a holomorphic function on $\D$, one finds (\ref{systemeCMC3}) to be equivalent to
\begin{equation}
\label{systemeCMC3bis}
 \mathcal{W}_{\s^3} (X) =-\frac{\eta}{2} \varphi \overline{\omega} e^{-2 \Lambda} =\overline{\left( -\frac{\eta}{2} \varphi \right)} {\omega} e^{-2 \Lambda}.
\end{equation}
Consequently, proposition \ref{Qtorduholomorphic} implies our second necessary condition

\vspace{5mm}
\underline{\textbf{$\mathcal{Q}$ is holomorphic.}}
\vspace{5mm}

Similarly $$ \begin{aligned} a \Omega_\nu + b \Omega_{\nu^*} &= a \omega  \frac{\Omega_\nu}{\omega} + \frac{b}{\omega} \omega \Omega_{\nu^*} \\
&=  a\omega + \frac{b}{\omega} \Omega_\nu \Omega_{\nu^*} \text{ using (\ref{omeganubase})} \\
&=a\omega - 2\frac{b}{\omega}  \mathcal{Q}  \text{ using (\ref{qtorduetomega})}.
\end{aligned}$$
This yields that  (\ref{systemeCMC4}) is equivalent to 
\begin{equation}
\label{qtorduforme}
\begin{aligned}
\mathcal{Q} &= \frac{a \omega^2}{2b} =  \frac{f}{2g}= \frac{fg}{2g^2} = \frac{1}{2}ab \varphi^2 = \frac{\eta^2 - \kappa }{4} \varphi^2. 
\end{aligned}
\end{equation}

Summing up our analysis has given us two necessary conditions :

\begin{itemize}
\item \underline{\textbf{$X$ is isothermic, with $\varphi \overline{\omega} \in \R$}}
\item \underline{\textbf{$\mathcal{Q}$ is holomorphic, with  $\mathcal{Q} = \frac{ \eta^2 - \kappa}{4} \varphi^2$.}}
\end{itemize}

Let us show they are necessary.
\vspace{5mm}

Let $X$ be an isothermic immersion such that $\mathcal{Q}$ is holomorphic.  Our aim is to write $\mathcal{Q}$ and $\mathcal{W}_{\s^3}(X)$ in the forms  respectively of (\ref{qtorduforme}) and (\ref{systemeCMC3bis}).

  Since $X$ is isothermic there exists a non null holomorphic function $\varphi_0$ such that $$R :=\overline{\varphi_0} \omega  \in \R.$$

\vspace{5mm}
\noindent \underline{\textbf{ Claim 1 : there exists a constant  $m \in \R$ such that $\mathcal{Q} = m \varphi_0^2$.}}
\vspace{5mm}
\begin{proof}
We will write $\mathcal{Q}$ as a function of $\varphi$, using (\ref{qtorduemega}) : 
$$
\mathcal{Q} =  \left(\omega_{z\zb} \omega - \omega_z \omega_{\zb} \right)e^{-2\Lambda} + \omega^2 \frac{h^2+1}{4}$$
Since $ \omega = \frac{R}{\overline{\varphi_0}}$, 
$$
\begin{aligned}
\omega_z &= \frac{R_z}{\overline{\varphi_0}} \\
\omega_{\zb} &= \frac{R_{\zb} }{\overline{\varphi_0}} - \frac{\overline{\partial_z \varphi_0} R}{\overline{\varphi_0^2}} \\
\omega_{z \zb} &= \frac{R_{z \zb} }{\overline{\varphi_0}} -\frac{\overline{\partial_z \varphi_0}R_z}{\overline{\varphi_0^2}}.\end{aligned}
$$
Thus 
\begin{equation}
\label{leomegazzb}
\begin{aligned}
\omega_{z\zb} \omega - \omega_z \omega_{\zb} &= \frac{ R}{\overline{\varphi_0}} \left( \frac{R_{z \zb} }{\overline{\varphi_0}} -\frac{\overline{\partial_z \varphi_0}R_z}{\overline{\varphi_0^2}} \right) -  \frac{R_z}{\overline{\varphi_0}} \left(  \frac{R_{\zb} }{\overline{\varphi_0}} - \frac{\overline{\partial_z \varphi_0} R}{\overline{\varphi_0^2}} \right) \\
&= \frac{ R_{z \zb} R - R_z R_{\zb} }{\overline{\varphi_0^2} } \\
&=\left( \frac{ R_{z \zb} R - R_z R_{\zb} }{\left| \varphi_0 \right|^4 } \right) \varphi_0^2. 
\end{aligned}
\end{equation}

As announced $\mathcal{Q}$ can be expressed  : 
\begin{equation}
\label{qtordumieux}
\mathcal{Q} = \frac{ \left(R_{z \zb} R - R_z R_{\zb} \right) e^{-2 \Lambda} + \frac{h^2 +1}{4}R^2}{\left| \varphi_0 \right|^4 } \varphi_0^2.
\end{equation}
Since $R \in \R$,  $ \frac{ \left(R_{z \zb} R - R_z R_{\zb} \right) e^{-2 \Lambda} + \frac{h^2 +1}{4}R^2}{\left| \varphi_0 \right|^4 }$ is real. Further $$\left(  \frac{ \left(R_{z \zb} R - R_z R_{\zb} \right) e^{-2 \Lambda} + \frac{h^2 +1}{4}R^2}{\left| \varphi_0 \right|^4 } \right)_{\zb} = \left( \frac{\mathcal{Q} }{\varphi_0^2} \right)_{\zb} =0$$
since $\mathcal{Q}$ and $\varphi_0$ are holomorphic.
As a real holomorphic function $ \frac{ \left(R_{z \zb} R - R_z R_{\zb} \right) e^{-2 \Lambda} + \frac{h^2 +1}{4}R^2}{\left| \varphi_0 \right|^4 }$ is  necessarily a constant that we will denote $m$.  This proves claim 1.
\end{proof}

\vspace{5mm}
\underline{\textbf{Claim 2 : There exists $n \in \R$ such that $\mathcal{W}_{\s^3} (X) =n \omega \overline{\varphi_0} e^{-2\Lambda}.$}}
\vspace{5mm}
\begin{proof}
Proposition \ref{Qtorduholomorphic} yields  $f$ holomorphic on $\D$ such that $$\mathcal{W}_{\s^3} (X) = \omega \overline{f} e^{-2\Lambda}.$$
Using $\omega= \frac{R}{\overline{\varphi_0}}$ one deduces
$$ \overline{ \left( \frac{f}{\varphi_0} \right)} = \frac{ \mathcal{W}_{\s^3} (X)}{R e^{-2\Lambda}} \in \R.$$
Since $ \frac{f}{\varphi_0}$ is holomorphic, there exists $n \in \R$ such that $f = n \varphi_0$, which proves claim 2.
\end{proof}

\vspace{5mm}
\underline{\textbf{Claim 3 : There exists $\lambda \in \R$, $\kappa \in \{ -1,0,1 \}$ and $\eta \in \R$  such that }} \newline
\underline{\textbf{ $\mathcal{W}_{\s^3} (X) =-\lambda \frac{\eta}{2} \omega \overline{\varphi_0} e^{-2\Lambda}$ and  $\mathcal{Q} = \frac{ \eta^2 - \kappa}{4} \lambda^2 \varphi_0^2$.}}
\vspace{5mm}
\begin{proof}
 If $n^2 -m \neq 0$, let $\lambda =2 \sqrt{ \left| n^2-m \right|}$, $\kappa = sg( n^2-m)$ and $\eta = -2 \frac{n}{\lambda}$.
Then $n = -\lambda \frac{\eta}{2}$ and 
$$ \begin{aligned} m &= -\left( n^2-m \right) + n^2\\
&=- \frac{\lambda^2 \kappa}{4} +  \lambda^2 \frac{\eta^2}{4}\\
&= \lambda^2 \frac{  \eta^2 - \kappa}{4}.
\end{aligned}$$
If $n^2 = m $, let $\kappa = 0$, $\lambda=1$, $\eta = -2 n$, which concludes the proof of claim 3.
\end{proof}

In the following we set $\varphi = \lambda \varphi_0$.

\vspace{5mm}
\underline{\textbf{Claim 4 : $v = \eta Y + \frac{\eta^2- \kappa}{2} \frac{\varphi}{\omega} \varphi \nu + \frac{\omega}{\varphi} \nu^*$ is a constant vector in $\R^{4,1}$.}}
\vspace{5mm}
\begin{proof}
Since $\eta \in \R$, $$a := \frac{\eta^2- \kappa}{2} \frac{\varphi}{\omega} = \frac{\eta^2- \kappa}{2}\frac{|\varphi|^2}{ \omega \overline{\varphi}} \in \R$$ and $$b := \frac{\omega}{\varphi} = \frac{ \omega \overline{\varphi} }{ |\varphi|^2} \in \R,$$
$v$ does belong in $\R^{4,1}$.
Further $$a_{\zb} + a \frac{ \omega_{\zb} }{\omega} =  \frac{\eta^2- \kappa}{2} \varphi \left( \frac{- \omega_{\zb} }{\omega^2} + \frac{ \omega_{\zb} }{ \omega^2}  \right)=0$$ 
and
$$b_{\zb} -b \frac{ \omega_{\zb} }{\omega} =  \frac{1}{ \varphi} \left( \omega_{\zb}-   \omega_{\zb}  \right)=0,$$
meaning that $a$ and $b$ satisfy (\ref{systemeCMC1}) and (\ref{systemeCMC2}).
Besides 
$$2 b \mathcal{W}_{\s^3} ( X)  + \eta |\omega|^2 e^{-2 \Lambda} = -2 \frac{ \eta}{2} \overline{ \omega} \varphi e^{-2 \Lambda}  \frac{\omega}{ \varphi} + \eta |\omega|^2 e^{-2 \Lambda} = 0$$
since by design, see claim 3, $\mathcal{W}_{\s^3} (X) =- \frac{\eta}{2} \omega \overline{\varphi} e^{-2\Lambda} = - \frac{\eta}{2} \overline{\omega} \varphi e^{-2 \Lambda}.$ $v$ must then satisfy   (\ref{systemeCMC3}).
Once more by construction $\mathcal{Q}$ satisfies (\ref{qtorduforme}), which was shown to be equivalent to (\ref{systemeCMC4}).
$v$ then satisfies : $v_z = 0$, and $v$ is a constant in $\R^{4,1}$, which proves claim 4. \end{proof}
$Y$ is then hyperplanar and according to theorem \ref{conformementCMC} $X$ is conformally CMC in  a space depending entirely on $\langle v, v \rangle = \kappa$. 
$\kappa$ can be expressed explicitely from $\mathcal{Q}$ et $\mathcal{W}_{\s^3} ( X)$. Indeed 
$$\begin{aligned} \left( \mathcal{W}_{\s^3} ( X)\right)^2 - \overline{\omega}^2e^{-4\Lambda} \mathcal{Q} &= \frac{ \eta^2}{4} \overline{\omega}^2  \varphi^2 e^{-4 \Lambda} - \frac{ \eta^2 - \kappa}{4} \varphi^2 \overline{ \omega}^2 e^{-4 \Lambda} \text{ using Claim 3 }\\
&= \kappa \left( \frac{\varphi \overline{\omega} e^{-2 \Lambda} }{2} \right)^2.
\end{aligned}$$

Since $\varphi \overline{\omega} \in \R^*$, $\left( \frac{\varphi \overline{\omega} e^{-2 \Lambda} }{2} \right)^2 \in \R_+^*$ and necessarily :

\begin{equation}
\label{lekappaaaaaaaaaaaaaaaaa}
\kappa = sg \left( \left( \mathcal{W}_{\s^3} ( X)\right)^2 - \overline{\omega}^2e^{-4\Lambda} \mathcal{Q} \right).
\end{equation}

We deduce the following theorem.
\begin{theo}
\label{conformementCMCnya1}
Let $X$ be a smooth conformal immersion on $\D$ in $\s^3$, and $\Phi$ (respectively $Z$) its representation in $\R^3$ (respectively $\mathbb{H}^3$) through $\pi$ (respectively $\tilde \pi$). 
 We assume that $X$ (or equivalently, see (\ref{projectionstereo5}) and (\ref{projectionhyper5}), $\Phi$ or $Z$)  has no umbilic point.
One of the representation of $X$ is conformally $CMC$ in its ambiant space if and only if $\mathcal{Q}$ is holomorphic and $X$ is isothermic.
More precisely  $\left( \mathcal{W}_{\s^3} ( X)\right)^2 - \overline{\omega}^2e^{-4\Lambda} \mathcal{Q}$ is then necessarily real and
\begin{itemize}
\item
$\Phi$ is conformally CMC (respectively minimal) in $\R^3$ if and only if $$\left( \mathcal{W}_{\s^3} ( X)\right)^2 - \overline{\omega}^2e^{-4\Lambda} \mathcal{Q}=0.$$ 
\item 
$X$  is conformally CMC (respectively minimal) in $\s^3$ if and only if $$\left( \mathcal{W}_{\s^3} ( X)\right)^2 - \overline{\omega}^2e^{-4\Lambda} \mathcal{Q} < 0.$$
\item
$Z$ is conformally CMC (respectively minimal) in $\Hy^3$ if and only if $$\left( \mathcal{W}_{\s^3} ( X)\right)^2 - \overline{\omega}^2e^{-4\Lambda} \mathcal{Q} > 0.$$
\end{itemize}
Conformally minimal immersions satisfy $\mathcal{W}_{\s^3} (X)=0$.
\end{theo}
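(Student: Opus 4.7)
The plan is to invoke theorem \ref{conformementCMC}, which reduces the conformally-CMC property to the statement that $Y$ lies in an affine hyperplane of $\R^{4,1}$, whose normal vector $v$ has squared Lorentzian norm $\kappa\in\{-1,0,1\}$ determining the ambient space ($\Hy^3$, $\R^3$, or $\s^3$ respectively). I would then translate this geometric condition into analytic constraints on $\omega$, $\mathcal{W}_{\s^3}(X)$, and $\mathcal{Q}$ by expanding a putative constant $v$ in the orthogonal moving frame $(Y,Y_z,Y_{\zb},\nu,\nu^*)$ built in subsection \ref{geometryofconformalGaussmaps}, using in particular the structural formulas (\ref{nuznustarbase}), (\ref{Hnubase}), (\ref{Hnustarbase}), (\ref{omeganubase}) and (\ref{omeganustarbase}).

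For the direct implication, assume $\langle Y,v\rangle=\eta$ for a constant $v\in\R^{4,1}\setminus\{0\}$ and $\eta\in\R$. Differentiating once and using conformality of $Y$ forces the $Y_z,Y_{\zb}$ components of $v$ to vanish, leaving $v=\eta Y+a\nu+b\nu^*$ for real-valued functions $a,b$. Differentiating again produces a closed system on $(a,b,\eta)$: two equations force $a\omega$ and $b/\omega$ to satisfy first-order holomorphicity conditions, which combined with the reality of $a$ and $b$ give the existence of a nonzero holomorphic $f$ with $\overline{f}\omega\in\R$, i.e.\ isothermicity of $X$ by proposition \ref{propositionisothermicimmersions}. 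A further equation rewrites $\mathcal{W}_{\s^3}(X)$ as the product of $\omega e^{-2\Lambda}$ with an antiholomorphic factor, so proposition \ref{Qtorduholomorphic} delivers holomorphicity of $\mathcal{Q}$.

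For the converse, given isothermic $X$ with holomorphic $\varphi_0$ satisfying $\overline{\varphi_0}\omega\in\R$, and $\mathcal{Q}$ holomorphic, the ratio $\mathcal{Q}/\varphi_0^2$ is holomorphic; expanding via (\ref{qtorduemega}) it is also real, hence a constant $m\in\R$. Similarly $\mathcal{W}_{\s^3}(X)/(\omega\overline{\varphi_0}e^{-2\Lambda})$ is a real holomorphic function, hence a constant $n\in\R$. Rescaling $\varphi=\lambda\varphi_0$ and choosing $\eta,\kappa$ with $\kappa\in\{-1,0,1\}$ so that $n=-\lambda\eta/2$ and $m=\lambda^2(\eta^2-\kappa)/4$, I would set
\begin{equation*}
v:=\eta Y+\frac{\eta^2-\kappa}{2}\,\frac{\varphi}{\omega}\,\nu+\frac{\omega}{\varphi}\,\nu^*
\end{equation*}
and check $v_z=0$ by verifying each of the four coefficient equations from the forward step. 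The identity $\langle v,v\rangle=\eta^2-2ab=\kappa$ then places $v$ in $\R^{4,1}$ with the prescribed causal type, and theorem \ref{conformementCMC} concludes.

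To identify the trichotomy, a short computation gives
\begin{equation*}
\bigl(\mathcal{W}_{\s^3}(X)\bigr)^2-\overline{\omega}^2 e^{-4\Lambda}\mathcal{Q}=\kappa\left(\frac{\varphi\overline{\omega}e^{-2\Lambda}}{2}\right)^2,
\end{equation*}
whose right-hand side is real with sign exactly $\kappa$; the linear (minimal) case $\eta=0$ then forces $\mathcal{W}_{\s^3}(X)=0$ through the relation $n=-\lambda\eta/2$. The main obstacle I anticipate is the careful bookkeeping in verifying $v_z=0$ for the converse, since each coefficient equation must be matched against $\langle\nu_z,\nu^*\rangle$, $H_{\nu^*}$ and $\Omega_{\nu^*}$, with the Gauss--Codazzi identity (\ref{Gauss-Codazzis3}) invoked to convert the reality/constancy of $m,n$ into the vanishing of $v_z$.
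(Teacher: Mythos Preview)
Your proposal is correct and follows essentially the same route as the paper: reduce via theorem \ref{conformementCMC} to the hyperplanarity of $Y$, expand the constant normal $v$ in the frame $(Y,Y_z,Y_{\zb},\nu,\nu^*)$, differentiate to obtain the system (\ref{systemeCMC1})--(\ref{systemeCMC5}), extract isothermicity and holomorphicity of $\mathcal{Q}$ in the forward direction, and in the converse show $\mathcal{Q}/\varphi_0^2$ and $\mathcal{W}_{\s^3}(X)/(\omega\overline{\varphi_0}e^{-2\Lambda})$ are real constants, rescale, and verify $v_z=0$ coefficientwise. The trichotomy computation and the identification of the minimal case via $\eta=0$ are likewise as in the paper.
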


Notice especially that according to our analysis $X$ isothermic and $\mathcal{Q}$ holomorphic heavily determines $\mathcal{Q}$. As a matter of fact it ensures that $\overline{\omega^2} \mathcal{Q} \in \R$. Accordingly one can slightly change the hypotheses of theorem \ref{conformementCMCnya1}.

\begin{theo}
\label{conformementCMCnya2}
Let $X$ be a smooth conformal immersion on $\D$ in $\s^3$, and $\Phi$ (respectively $Z$) its representation in $\R^3$ (respectively $\mathbb{H}^3$) through $\pi$ (respectively $\tilde \pi$). 
 We assume $X$ (or equivalently, see (\ref{projectionstereo5}) and (\ref{projectionhyper5}), $\Phi$ or $Z$) has no umbilic point.
One of the representation of $X$ is conformally $CMC$ in its ambiant space if and only if $\mathcal{Q}$ is holomorphic and $\overline{\omega^2} \mathcal{Q} \in \R$.
More precisely  
\begin{itemize}
\item
$\Phi$ is conformally CMC (respectively minimal) in $\R^3$ if and only if $$\left( \mathcal{W}_{\s^3} ( X)\right)^2 - \overline{\omega}^2e^{-4\Lambda} \mathcal{Q}=0.$$
\item 
$X$  is conformally CMC (respectively minimal) in $\s^3$ if and only if $$\left( \mathcal{W}_{\s^3} ( X)\right)^2 - \overline{\omega}^2e^{-4\Lambda} \mathcal{Q} < 0.$$
\item
$Z$ is conformally CMC (respectively minimal) in $\Hy^3$ if and only if $$\left( \mathcal{W}_{\s^3} ( X)\right)^2 - \overline{\omega}^2e^{-4\Lambda} \mathcal{Q} > 0.$$
\end{itemize}
Conformally minimal immersions satisfy $\mathcal{W}_{\s^3} (X)=0$.
\end{theo}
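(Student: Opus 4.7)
The plan is to derive Theorem \ref{conformementCMCnya2} from the previously established Theorem \ref{conformementCMCnya1} by proving that, under the standing hypothesis that $\mathcal{Q}$ is holomorphic, the pointwise condition $\overline{\omega^2}\mathcal{Q}\in\R$ is equivalent to $X$ being isothermic. Once this equivalence is established, nothing remains to prove: the three bullets characterizing the ambient model space through the sign of $(\mathcal{W}_{\s^3}(X))^2-\overline{\omega}^2 e^{-4\Lambda}\mathcal{Q}$ are identical to those of Theorem \ref{conformementCMCnya1}, and the vanishing of $\mathcal{W}_{\s^3}(X)$ in the conformally minimal case is inherited verbatim.

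For the direction ``isothermic $\Rightarrow \overline{\omega^2}\mathcal{Q}\in\R$'', I would simply invoke Claim~1 in the proof of Theorem~\ref{conformementCMCnya1}: once $X$ is isothermic with $\overline{\varphi_0}\omega=R\in\R$ for some non-zero holomorphic $\varphi_0$ and $\mathcal{Q}$ is holomorphic, one has $\mathcal{Q}=m\varphi_0^2$ for some $m\in\R$, so
\[\overline{\omega^2}\mathcal{Q} = m\,\overline{\omega^2}\varphi_0^{\,2} = m\,(\overline{\omega}\varphi_0)^2 = mR^2 \in \R.\]

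For the converse, note first that $\overline{\omega^2}\mathcal{Q}\in\R$ is equivalent to $\mathcal{Q}/\omega^2\in\R$, which makes sense since $\omega\neq 0$ under the no-umbilic hypothesis. Assume $\mathcal{Q}\not\equiv 0$. I would first argue that every zero of the holomorphic function $\mathcal{Q}$ must be of even order: near an isolated zero $z_0$ of order $k$, the leading behaviour of $\mathcal{Q}/\omega^2$ is $c(z-z_0)^k/\omega(z_0)^2$, whose argument winds by $2\pi k$ around $z_0$, while the continuous non-vanishing factor $1/\omega^2$ contributes no winding; the only way the ratio can remain real on a punctured neighbourhood of $z_0$ is $k$ even. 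Consequently $F:=\sqrt{\mathcal{Q}}$ is globally holomorphic on the simply connected disk $\D$. A direct computation yields
\[(\overline{F}\,\omega)^2 = \overline{\mathcal{Q}}\,\omega^2 = \overline{\,\overline{\omega^2}\mathcal{Q}\,}\in\R,\]
so $\overline{F}\,\omega\in\R\cup i\R$ pointwise. Since $F$ has only isolated zeros, the complement is connected, and by continuity the value sits in a single line; replacing $F$ by $iF$ if necessary yields $\overline{F}\,\omega\in\R$ with $F$ holomorphic and non-zero on $\D$. Proposition~\ref{propositionisothermicimmersions} then delivers the isothermicity of $X$, and Theorem~\ref{conformementCMCnya1} closes the argument.

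The main obstacle I anticipate is precisely the global well-definedness of $F=\sqrt{\mathcal{Q}}$, which I address via the parity-of-zeros argument sketched above; this step is what genuinely uses the realness (as opposed to just the holomorphicity) of $\overline{\omega^2}\mathcal{Q}$. A secondary subtlety is the degenerate case $\mathcal{Q}\equiv 0$, formally allowed by the hypothesis: there, the reduction strategy collapses, and one should handle it directly by revisiting the construction of the constant vector $v=\eta Y+a\nu+b\nu^*$ in Section~4 with $a=0$, solving $b_z=b\,\overline{\omega}_z/\overline{\omega}$ coupled to $2b\mathcal{W}_{\s^3}(X)+\eta|\omega|^2e^{-2\Lambda}=0$, which places such an $X$ in the first bullet with $\mathcal{W}_{\s^3}(X)=0$.
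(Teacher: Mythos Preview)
Your reduction in the main case $\mathcal{Q}\not\equiv 0$ is essentially the paper's: both arguments extract a holomorphic square root of $\mathcal{Q}$, use $\overline{\omega^2}\mathcal{Q}\in\R$ to conclude $\overline{\omega}\sqrt{\mathcal{Q}}\in\R\cup i\R$, hence isothermic, and then invoke Theorem~\ref{conformementCMCnya1}. The paper first splits via Corollary~\ref{corqtorduwillmconforme} into the isothermic case (done immediately) and the Willmore case before taking $\sqrt{\mathcal{Q}}$; you skip that detour and go straight to the square root, which is a bit cleaner. A minor remark: your parity-of-zeros claim is actually too modest. On any small punctured disc around a putative zero $z_0$ the real continuous function $\mathcal{Q}/\omega^2$ is non-vanishing and hence of constant sign, so its winding is zero, forcing the order of $z_0$ to be zero --- in other words $\mathcal{Q}$ is zero-free, and the global square root is immediate on $\D$.

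The genuine gap is the degenerate case $\mathcal{Q}\equiv 0$. Your proposed direct construction sets $a=0$ and seeks a real $b$ solving $b_z=b\,\overline{\omega}_z/\overline{\omega}$; but the general solution is $b=\overline{g}\,\overline{\omega}$ with $g$ holomorphic, and realness of such a $b$ is precisely the isothermic condition $g\omega\in\R$, which you have not established. Moreover $\mathcal{Q}\equiv 0$ alone does not force $\mathcal{W}_{\s^3}(X)=0$, so your coupling with \eqref{systemeCMC3} does not deliver the claimed conclusion either. The paper handles this case differently: Corollary~\ref{corqtorduwillmconforme} gives ``$X$ isothermic or $X$ Willmore''; the isothermic branch feeds into Theorem~\ref{conformementCMCnya1}, while the remaining branch --- Willmore with $\mathcal{Q}\equiv 0$ --- is dispatched by quoting Bryant's Theorem~C in \cite{bibdualitytheorem}, which asserts that such surfaces are conformally minimal in $\R^3$.
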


\begin{proof}
If $X$ is conformally CMC, then $\mathcal{Q}$ is holomorphic and  $\left( \mathcal{W}_{\s^3} ( X)\right)^2 - \overline{\omega}^2e^{-4\Lambda} \mathcal{Q}$ is real according to theorem \ref{conformementCMCnya1}. Then since $\mathcal{W}_{\s^3} ( X) \in \R$, 
$\overline{\omega^2} \mathcal{Q} \in \R$.

Conversely assume that  $\mathcal{Q}$ is holomorphic and $\overline{\omega^2} \mathcal{Q} \in \R$. Then using corollary \ref{corqtorduwillmconforme}, $X$ is isothermic and conformal Willmore or Willmore. If $X$ is isothermic, the theorem is proved with theorem \ref{conformementCMCnya1}. Let us then assume that $X$ is Willmore. 
Let us first assume that $\mathcal{Q}$ is non null.
Away from the zeros of $\mathcal{Q}$, $\overline{\omega^2} \mathcal{Q}$  does not cancel and is then of fixed sign, and $\sqrt{ \mathcal{Q}}$ is holomorphic.
Then $$  \left(\overline{\omega} \sqrt{\mathcal{Q}} \right)^2 \in \R^*,$$ and thus $$\overline{\omega} \sqrt{\mathcal{Q}} \in \R \text{ or } i \R.$$
There exists then a non null holomorphic function ($\varphi =\sqrt{\mathcal{Q} }$ or $\varphi =i \sqrt{\mathcal{Q}}$) such that $\overline{\omega} \varphi \in \R$.  The theorem is then proved with theorem \ref{conformementCMCnya1}. 
The case $X$ Willmore and $\mathcal{Q} =0$  is now the only one left. Using theorem C in \cite{bibdualitytheorem}  yields $\Phi$ conformally minimal in $\R^3$. This concludes the proof.
\end{proof}

\renewcommand{\thesection}{\Alph{section}}
\setcounter{section}{0} 
\section{Appendix}
\tocless\subsection{Formulas in $\R^3$}
Let $\Phi \, : \, \D \rightarrow \R^3$ be a smooth conformal immersion.  Let $\n = \frac{\Phi_z \times \Phi_{\zb} }{ i \left| \Phi_z \right|^2 }$ denote its Gauss map (with $\times$ the classical vectorial product in $\R^3$), $\lambda = \frac{1}{2}  \log \left( 2 \left| \Phi_z \right|^2 \right)$ its conformal factor and $H = \left\langle \frac{ \Phi_{z \zb}}{ \left| \Phi_z \right|^2} , \n \right\rangle $ its mean curvature. Its tracefree curvature is defined as follows 
$$\Omega:= 2\left\langle \Phi_{zz}, \n \right\rangle. $$

Then 
\begin{equation}
\label{nzphi}
\n_z = - H \Phi_z - \Omega e^{-2 \lambda} \Phi_{\zb},
\end{equation} 
\begin{equation}
\label{phizzb}
\Phi_{z \zb} = H \frac{e^{2\lambda}}{2} \n ,
\end{equation}
\begin{equation}
\label{phizz}
\Phi_{z z} = 2 \lambda_z \Phi_z + \frac{\Omega}{2} \n
\end{equation}
and Gauss-Codazzi can be written 
\begin{equation}
\label{Gauss-CodazziR3}
\Omega_{\zb} e^{-2\lambda} = H_z .
\end{equation}

Further if we write the second fundamental form of $\Phi$, $ A = \langle \nabla^2 \Phi , \n \rangle $ such that 
$$ e^{-2\lambda} A= \begin{pmatrix} \epsilon & \varphi \\ \varphi & \gamma \end{pmatrix},$$ then
$$ H= \frac{ \epsilon + \gamma }{2},$$
$$ \Ar =  \begin{pmatrix} \frac{\epsilon - \gamma }{2} & \varphi \\ \varphi & \frac{\gamma - \epsilon}{2} \end{pmatrix},$$ with $\Ar$ the tracefree second fundamental form defined in (\ref{tracefreesecondfundamentalform}) and
\begin{equation} \label{nablanhar} \begin{aligned} \nabla \n &= - H \nabla \Phi - \Ar \nabla \Phi = \begin{pmatrix} \epsilon \Phi_x + \varphi \Phi_y \\ \varphi \Phi_x + \gamma \Phi_y \end{pmatrix} \\
&= - H \nabla \Phi - \begin{pmatrix} \frac{\epsilon - \gamma }{2} \Phi_x + \varphi \Phi_y \\ \varphi \Phi_x -  \frac{\epsilon - \gamma }{2} \Phi_y \end{pmatrix}.  \end{aligned} \end{equation}
 We can check 
$$
\begin{aligned} 
\n \times \Ar \nabla \Phi &= \n \times \begin{pmatrix} \frac{\epsilon - \gamma }{2}  \Phi_x + \varphi \Phi_y \\ \varphi \Phi_x - \frac{\epsilon - \gamma }{2}  \Phi_y \end{pmatrix}  \\
&=  \begin{pmatrix} \frac{\epsilon - \gamma }{2} \Phi_y - \varphi \Phi_x \\ \varphi \Phi_y + \frac{\epsilon - \gamma }{2} \Phi_x \end{pmatrix}
\end{aligned}$$
and notice 
\begin{equation}
\label{formuleennablaperpn} \nabla^\perp \n = - H \nabla^\perp \Phi - \n \times \Ar \nabla \Phi.
\end{equation}

\tocless\subsection{Formulas in $\s^3$}
\label{sectionannexcalculss3}
Let $\Phi \, : \, \D \rightarrow \R^3$ be a smooth conformal immersion and $X = \pi^{-1} \circ \Phi \, : \, \D \rightarrow \s^3$. Let $\Lambda := \frac{1}{2} \log \left( 2 \left| X_z \right|^2 \right)$ be its conformal factor, $\vec{N}$  such that $\left( X, e^{-\Lambda} X_x, e^{- \Lambda} X_y, \vec{N} \right)$ is a direct orthonormal basis of $\R^4$ its Gauss map,  $ h = \left\langle  \frac{X_{z\zb}}{\left| X_z \right|^2}, \vec{N} \right\rangle$ its mean curvature and  $\omega := 2\left\langle X_{zz}, \n \right\rangle$ its tracefree curvature. 
Then
\begin{equation}
\label{projectionstereo0}
X :=  \frac{1}{1+ |\Phi|^2} \begin{pmatrix} 2\Phi \\ |\Phi|^2-1 \end{pmatrix}
\end{equation}
which yields
\begin{equation}
\label{projectionstereo1}
 \begin{aligned} 
X_z &= d\pi^{-1} \left( \Phi_z \right)= \frac{2}{1+|\Phi|^2 } \begin{pmatrix} \Phi_z \\ 0 \end{pmatrix} -\frac{ 4 \langle \Phi_z, \Phi \rangle_3}{ \left( 1+ |\Phi|^2 \right)^2 } \begin{pmatrix} \Phi \\ -1\end{pmatrix}.
\end{aligned}
\end{equation}

Since $\pi$ is conformal, $\left\langle d \pi^{-1} \left( \Phi_z \right), d \pi^{-1} \left( \n \right) \right\rangle =  \left\langle \Phi_z, \n \right\rangle=0$. Then $ \vec{N} = \frac{ d \pi^{-1} \left( \n \right)}{ \left| d \pi^{-1} \left( \n \right) \right|}$ and thus
\begin{equation}
\label{projectionstereo2}
\begin{aligned}
\vec{N} &=\begin{pmatrix} \n \\ 0 \end{pmatrix} - \frac{2 \langle \n, \Phi \rangle }{1+ |\Phi|^2 } \begin{pmatrix} \Phi \\ -1 \end{pmatrix}.
\end{aligned}
\end{equation}
Using the corresponding definitions we successively deduce
\begin{equation}
\label{projectionstereo3}
\begin{aligned}
e^{2 \Lambda} &=2 \left\langle X_z, X_{\zb} \right\rangle = \frac{4}{\left( 1+ |\Phi|^2 \right)^2 } e^{2\lambda} ,
\end{aligned}
\end{equation}
\begin{equation}
\label{projectionstereo4}
\begin{aligned}
h &= \left\langle \frac{X_{z \zb}}{ \left| X_z \right|^2}, \vec{N} \right\rangle = \frac{|\Phi|^2 +1}{2}H + \langle \n , \Phi \rangle_3 
\end{aligned}
\end{equation}
\begin{equation}
\label{projectionstereo5}
\begin{aligned}
\omega &= 2 \left\langle X_{zz}, \vec{N} \right\rangle= \frac{2\Omega}{1+ | \Phi|^2 }.
\end{aligned}
\end{equation}
Then one can compute 
\begin{equation}
\label{projectionstereo7}
 \begin{aligned}
 h \begin{pmatrix} X \\1 \end{pmatrix} +  \begin{pmatrix} \vec{N} \\ 0 \end{pmatrix} &=  \left( \frac{|\Phi|^2 +1}{2}H + \langle \n , \Phi \rangle  \right) \begin{pmatrix}  \frac{2 \Phi}{1+ |\Phi|^2} \\ \frac{|\Phi|^2-1}{1+ |\Phi|^2}  \\ 1 \end{pmatrix}+  \begin{pmatrix} \n -  \frac{2 \langle \n, \Phi \rangle }{1+ |\Phi|^2 } \Phi \\  \frac{2 \langle \n, \Phi \rangle }{1+ |\Phi|^2 } \\ 0  \end{pmatrix} \\
&= \begin{pmatrix} H \Phi + \frac{2 \langle \n, \Phi \rangle }{1+ |\Phi|^2 } \Phi \\ H \frac{ | \Phi |^2 - 1 }{2} + \langle \n , \Phi \rangle \frac{|\Phi|^2-1}{1+ |\Phi|^2} \\ H \frac{ | \Phi|^2 +1}{2} + \langle \n, \Phi \rangle \end{pmatrix}  +  \begin{pmatrix} \n -  \frac{2 \langle \n, \Phi \rangle }{1+ |\Phi|^2 } \Phi \\  \frac{2 \langle \n, \Phi \rangle }{1+ |\Phi|^2 } \\ 0  \end{pmatrix} \\
&= H \begin{pmatrix} \Phi \\ \frac{ | \Phi|^2 -1}{2} \\ \frac{ | \Phi |^2 +1}{2} \end{pmatrix} + \begin{pmatrix} \n \\ \langle \n ,\Phi \rangle \\ \langle \n , \Phi \rangle \end{pmatrix}.
\end{aligned}
\end{equation}
Which shows that 
\begin{equation}
\label{projectionstereo6}
Y = h \begin{pmatrix} X \\ 1 \end{pmatrix} + \begin{pmatrix} N \\ 0 \end{pmatrix}.
\end{equation}

One may wish to compute in $\s^3$ without going through $\Phi$. The relevant formulas then are 
\begin{equation}
\label{NzX}
\vec{N}_z = - h X_z - \omega e^{-2 \Lambda} X_{\zb},
\end{equation} 

\begin{equation}
\label{Xzzb}
X_{z \zb} = h \frac{e^{2\Lambda}}{2} \vec{N} - \frac{e^{2\Lambda}}{2} X ,
\end{equation}

\begin{equation}
\label{Xzz}
X_{z z} = 2 \Lambda_z X_z + \frac{\omega}{2} \vec{N},
\end{equation}

and Gauss-Codazzi can be written 

\begin{equation}
\label{Gauss-Codazzis3}
\omega_{\zb} e^{-2\Lambda} = h_z .
\end{equation}

\tocless\subsection{Mean curvature of a sphere in $\s^3$}
\label{meancurvsphere}
Let $\sigma$ be a sphere in $\s^3$. Up to an isometry of $\s^3$ $\sigma$ can be assumed to be a sphere centered on the south pole $S$ of radius $r \le \frac{\pi}{2}$. Then $\pi \circ \sigma$ is a sphere  of $\R^3$ centered on the origin of radius $R \le 1$. It can be conformally parametrized over $\R^2 \cup \infty$ by  $\Phi (x,y) =  \frac{R}{1+ x^2 +y^2} \begin{pmatrix} 2x \\2 y \\ x^2 +y^2-1 \end{pmatrix}$, of constant mean curvature $H = \frac{1}{R}$.  Then $\sigma$ is conformally parametrized by 

$$X =  \frac{ 1}{1+ R^2} \begin{pmatrix}  \frac{2R}{1+ x^2 +y^2} \begin{pmatrix} 2x \\2 y \\ x^2 +y^2-1 \end{pmatrix} \\ R^2 -1  \end{pmatrix}.$$

One can easily compute using basic trigonometry the tangent of $r$ (see drawing to insert) and find 

$$ \begin{aligned}
\tan \left( r \right) &= \frac{2R }{1-R^2}. 
\end{aligned}$$
Computing $h$ at any point $(x,y)$ using (\ref{projectionstereo4}) yields  with  $H= \frac{1}{R}$, $\n = - \frac{\Phi}{R}$

$$h= \frac{R^2 +1}{2R} - R = \frac{1}{\tan (r)}$$
for any $(x,y)$.

Since neither $h$ nor $r$ change under the action of isometries, any sphere $\sigma$ of $\s^3$ of radius $r$ has constant mean curvature 

\begin{equation}
\label{h(r)}
h = \text{cotan} (r). 
\end{equation}

\tocless\subsection{Formulas in $\mathbb{H}^3$}
\label{sectionannexcalculsH3}
Let $\Phi \, : \, \D \rightarrow \R^3$ be a smooth conformal immersion and $Z = {\tilde \pi}^{-1} \circ \Phi \, : \, \D \rightarrow \mathbb{H}^3$. 
Then
\begin{equation}
\label{projectionhyper0}
Z :=  \frac{1}{1- |\Phi|^2} \begin{pmatrix} 2\Phi \\ |\Phi|^2+1 \end{pmatrix}
\end{equation}
which yields
\begin{equation}
\label{projectionhyper1}
\begin{aligned}
Z_z &= \frac{2}{1-  | \Phi|^2 } \begin{pmatrix} \Phi_z \\ 0 \end{pmatrix} + \frac{4 \langle \Phi_z , \Phi \rangle }{\left( 1- | \Phi|^2 \right)^2} \begin{pmatrix} \Phi \\ 1 \end{pmatrix} .
\end{aligned}
\end{equation}

Since $\tilde \pi$ is conformal, $\left\langle d \tilde \pi^{-1} \left( \n \right) ,Z_z\right\rangle = \left \langle \Phi_z , \n \right\rangle = 0$. Then $ \n^Z = \frac{d \tilde \pi^{-1} (\n)}{ \left| d \tilde \pi^{-1} \left( \n \right) \right|}$ and thus 
\begin{equation}
\label{projectionhyper2}
\begin{aligned}
\n^Z &= \begin{pmatrix} \n \\ 0 \end{pmatrix} + \frac{2 \langle \n, \Phi \rangle }{1- |\Phi|^2} \begin{pmatrix} \Phi \\ 1 \end{pmatrix}.
\end{aligned}
\end{equation}
Using the corresponding definition we successively deduce
\begin{equation}
\label{projectionhyper3}
\begin{aligned}
e^{2\lambda^Z} &= \frac{4}{\left( 1- |\Phi|^2 \right)^2 } e^{2\lambda} ,
\end{aligned}
\end{equation}
\begin{equation}
\label{projectionhyper4}
\begin{aligned}
H^{Z} &=  \frac{1- |\Phi|^2}{2} H - \langle \n , \Phi \rangle ,
\end{aligned}
\end{equation}
\begin{equation}
\label{projectionhyper5}
\begin{aligned}
\Omega^{Z} &= \frac{2\Omega}{1-|\Phi|^2}.
\end{aligned}
\end{equation}
Then one can compute 
\begin{equation}
\label{projectionhyper7}
 \begin{aligned}
H^Z \begin{pmatrix} Z_h \\ -1 \\ Z_4 \end{pmatrix} + \begin{pmatrix} \n^Z_h \\ 0 \\ \n^Z_4 \end{pmatrix} &=  \left( \frac{1- |\Phi|^2}{2} H - \langle \n , \Phi \rangle   \right) \begin{pmatrix}  \frac{2 \Phi}{1- |\Phi|^2}  \\ -1 \\   \frac{|\Phi|^2+1}{1-|\Phi|^2}  \end{pmatrix} +  \begin{pmatrix} \n + \frac{2 \langle \n, \Phi \rangle }{1- |\Phi|^2 } \Phi  \\ 0 \\  \frac{2 \langle \n,\Phi \rangle }{1- |\Phi|^2 }  \end{pmatrix} \\
&= \begin{pmatrix} H \Phi - \frac{2 \langle \n, \Phi \rangle }{1- |\Phi|^2 } \Phi \\- \frac{1- |\Phi|^2}{2} H + \langle \n , \Phi \rangle  \\ H \frac{ | \Phi |^2 +1 }{2} - \langle \n, \Phi \rangle  \frac{|\Phi|^2+1}{1-|\Phi|^2} \end{pmatrix} +  \begin{pmatrix} \n +  \frac{2 \langle \n, \Phi \rangle }{1- |\Phi|^2 } \Phi  \\ 0 \\ \frac{2 \langle \n, \Phi \rangle }{1- |\Phi|^2 }   \end{pmatrix} \\
&= H \begin{pmatrix} \Phi \\ \frac{ | \Phi|^2 -1}{2} \\ \frac{ | \Phi |^2 +1}{2} \end{pmatrix} + \begin{pmatrix} \n \\ \langle \n ,\Phi \rangle \\ \langle \n , \Phi \rangle \end{pmatrix}.
\end{aligned}
\end{equation}
Which shows that 
\begin{equation}
\label{projectionstereo6}
Y = H^Z \begin{pmatrix} Z_{h} \\ -1 \\ Z_4 \end{pmatrix} + \begin{pmatrix} \n_{h}^Z  \\ 0 \\ \n_4^Z \end{pmatrix}.
\end{equation}

\tocless\subsection{Computations for the conformal Gauss map }
\label{computationsconformalgaussmap}
Let $\Phi  \, : \, \D \rightarrow \R^3$ be a smooth conformal immersion of representation $X$ in $\s^3$  and of conformal Gauss map  $Y$.

Let us first use the expression (\ref{Yphi}).
Then 
$$ \begin{aligned}
Y_z &= H_z  \begin{pmatrix} \Phi \\ \frac{ | \Phi|^2 -1}{2} \\ \frac{ | \Phi |^2 +1}{2} \end{pmatrix} + H \begin{pmatrix}  \Phi_z \\ \langle \Phi_z , \Phi \rangle \\ \langle \Phi_z , \Phi_z \rangle \end{pmatrix} + \begin{pmatrix} \n_z \\ \langle \n_z ,\Phi \rangle \\ \langle \n_z , \Phi \rangle \end{pmatrix}
\end{aligned}
$$
and using (\ref{nzphi})
\begin{equation}
\label{YzPhi}
Y_z=  H_z  \begin{pmatrix} \Phi \\ \frac{ | \Phi|^2 -1}{2} \\ \frac{ | \Phi |^2 +1}{2} \end{pmatrix} - \Omega e^{-2\lambda}  \begin{pmatrix}  \Phi_{\zb} \\ \langle \Phi_{\zb}, \Phi \rangle \\ \langle \Phi_{\zb} , \Phi \rangle \end{pmatrix}.
\end{equation}
Using (\ref{Gauss-CodazziR3}) and (\ref{phizz})  we compute
\begin{equation}
\label{YzzbPhi}
\begin{aligned}
Y_{z \zb} &= H_{z \zb} \begin{pmatrix} \Phi \\ \frac{ | \Phi|^2 -1}{2} \\ \frac{ | \Phi |^2 +1}{2} \end{pmatrix} - \frac{ \left| \Omega \right|^2 }{2}e^{-2\lambda}  \begin{pmatrix}  \n \\ \langle \n, \Phi \rangle \\ \langle \n , \Phi\rangle \end{pmatrix} \\
&= \mathcal{W}( \Phi) \begin{pmatrix} \Phi \\ \frac{ | \Phi|^2 -1}{2} \\ \frac{ | \Phi |^2 +1}{2} \end{pmatrix} - \frac{\left| \Omega \right|^2 e^{-2\lambda} }{2} Y
\end{aligned}
\end{equation}
where 
\begin{equation}
\label{wtorduphi}
\mathcal{W}( \Phi) = H_{z \zb} + \frac{ \left| \Omega \right|^2 e^{-2\lambda} }{2} H \in \R .
\end{equation}

On the other hand 
\begin{equation}
\label{YzzPhi}
\begin{aligned}
Y_{zz} &= H_{zz} \begin{pmatrix} \Phi \\ \frac{ | \Phi|^2 -1}{2} \\ \frac{ | \Phi |^2 +1}{2} \end{pmatrix} + H_z \begin{pmatrix}  \Phi_{z} \\ \langle \Phi_{z}, \Phi \rangle \\ \langle \Phi_{z} , \Phi \rangle \end{pmatrix} - \left( \Omega e^{-2 \lambda} \right)_z \begin{pmatrix}  \Phi_{\zb} \\ \langle \Phi_{\zb}, \Phi \rangle \\ \langle \Phi_{\zb} , \Phi \rangle \end{pmatrix} \\& - \Omega \left(  \frac{H}{2} \begin{pmatrix}  \n \\ \langle \n, \Phi \rangle \\ \langle \n , \Phi\rangle \end{pmatrix} + \frac{1}{2} \begin{pmatrix} 0 \\ 1 \\ 1 \end{pmatrix} \right) 
\end{aligned}
\end{equation}
using (\ref{phizzb}).
Then if we define Bryant's functional as $\mathcal{Q} = \left\langle Y_{zz} , Y_{zz} \right\rangle$ we find

\begin{equation}
\label{QPhi}
\begin{aligned}
\mathcal{Q} &=H_{zz}\Omega - H_z \left( \Omega e^{-2 \lambda} \right)_z e^{2\lambda} + \Omega \frac{H^2}{4} \\
&= \left( \Omega_{\zb} e^{-2\lambda} \right)_z \Omega - \Omega_{\zb} \left( \Omega e^{-2 \lambda} \right)_z + \Omega \frac{H^2}{4}   \text{using (\ref{Gauss-CodazziR3})} \\
&= \left( \Omega_{z\zb} \Omega - \Omega_z \Omega_{\zb} \right) e^{-2\lambda} + \Omega \frac{H^2}{4} \\
&= \Omega^2 e^{-2 \lambda} \left( \frac{\Omega_z}{\Omega} \right)_{\zb} + \Omega \frac{H^2}{4}  =  \Omega^2 e^{-2 \lambda} \left( \frac{\Omega_{\zb}}{\Omega} \right)_{z} + \Omega \frac{H^2}{4}.
\end{aligned}
\end{equation}

\vspace{5mm}

We will now compute using expression (\ref{projectionstereo6}).
Then 
$$ \begin{aligned}
Y_z &= h_z  \begin{pmatrix} X \\ 1 \end{pmatrix} + h \begin{pmatrix}  X_z \\1 \end{pmatrix} + \begin{pmatrix} \vec{N}_z \\ 0 \end{pmatrix}
\end{aligned}
$$
and using (\ref{NzX})
\begin{equation}
\label{YzX}
Y_z=  h_z  \begin{pmatrix} X \\ 1 \end{pmatrix} - \omega e^{-2\Lambda}  \begin{pmatrix}  X_{\zb} \\0 \end{pmatrix}.
\end{equation}
Using (\ref{Gauss-Codazzis3}) and (\ref{Xzz})  we compute
\begin{equation}
\label{YzzbX}
\begin{aligned}
Y_{z \zb} &= h_{z \zb} \begin{pmatrix} X \\ 1 \end{pmatrix} - \frac{ \left| \omega \right|^2 }{2}e^{-2\Lambda}  \begin{pmatrix}  \vec{N} \\ 0 \end{pmatrix} \\
&= \mathcal{W}_{\s^3}( X) \begin{pmatrix}X \\1 \end{pmatrix} - \frac{\left| \omega \right|^2 e^{-2\Lambda} }{2} Y
\end{aligned}
\end{equation}
where 
\begin{equation}
\label{wtorduX}
\mathcal{W}_{\s^3}( X) = h_{z \zb} + \frac{ \left| \omega \right|^2 e^{-2\Lambda} }{2} h \in \R .
\end{equation}
Notice that  using (\ref{projectionstereo3}), (\ref{projectionstereo4}) and (\ref{projectionstereo5})
\begin{equation} \begin{aligned} 
\label{wtorduXPhi}
\mathcal{W}_{\s^3}( X) &= \left( \frac{ |\Phi|^2 +1}{2} H + \langle \n , \Phi \rangle \right)_{z \zb} + \left( \frac{ |\Phi|^2 +1}{2} H + \langle \n , \Phi \rangle \right) \frac{|\Omega|^2e^{-2 \lambda}}{2} \\
&=\left(  \frac{ |\Phi|^2 +1}{2} H_z + \langle \Phi_z , \Phi \rangle H + \langle \n_z , \Phi \rangle \right)_{\zb} +  \frac{ |\Phi|^2 +1}{2} H \frac{ |\Omega|^2 e^{-2\lambda}}{2}\\&+ \langle \n, \Phi \rangle \frac{ |\Omega|^2 e^{-2\lambda}}{2} \\
&= \left(  \frac{ |\Phi|^2 +1}{2} H_z - \Omega e^{-2 \lambda} \langle \Phi_{\zb} , \Phi \rangle \right)_{\zb} +  \frac{ |\Phi|^2 +1}{2} H \frac{ |\Omega|^2 e^{-2\lambda}}{2}\\&+ \langle \n, \Phi \rangle \frac{ |\Omega|^2 e^{-2\lambda}}{2} \\
&=  \frac{ |\Phi|^2 +1}{2} \mathcal{W}(\Phi ) +  \langle \Phi_{\zb} , \Phi \rangle H_z - \Omega_{\zb} e^{-2\lambda} \langle \Phi_{\zb}, \Phi \rangle - \frac{ |\Omega|^2 e^{-2\lambda} }{2} \langle \n, \Phi \rangle\\& + \langle \n, \Phi \rangle \frac{ |\Omega|^2 e^{-2\lambda}}{2} \\
&=  \frac{ |\Phi|^2 +1}{2} \mathcal{W}(\Phi ), 
\end{aligned}\end{equation}
  using (\ref{nzphi}) to obtain the third equality and  (\ref{Gauss-Codazzis3}) to conclude.
On the other hand 
\begin{equation}
\label{YzzX}
\begin{aligned}
Y_{zz} &= h_{zz} \begin{pmatrix} X \\ 1 \end{pmatrix} + h_z \begin{pmatrix} X_{z} \\ 0 \end{pmatrix} - \left( \omega e^{-2 \Lambda} \right)_z \begin{pmatrix}  X_{\zb} \\ 0 \end{pmatrix} - \omega \left(  \frac{h}{2} \begin{pmatrix}  \vec{N} \\ 0 \end{pmatrix} - \frac{1}{2} \begin{pmatrix} X \\ 0 \end{pmatrix} \right) 
\end{aligned}
\end{equation}
using (\ref{phizzb}).
Then if we define $\mathcal{Q} = \left\langle Y_{zz} , Y_{zz} \right\rangle$ we find, once more by applying (\ref{Gauss-Codazzis3}), 
\begin{equation}
\label{QX}
\begin{aligned}
\mathcal{Q} &=h_{zz}\omega - h_z \left( \omega e^{-2 \Lambda} \right)_z e^{2\Lambda} + \omega^2 \frac{h^2+1}{4} \\
&= \left( \omega_{\zb} e^{-2\Lambda} \right)_z \omega - \omega_{\zb} \left( \omega e^{-2 \Lambda} \right)_z + \omega^2 \frac{h^2+1}{4}   \\
&= \left( \omega_{z\zb} \omega - \omega_z \omega_{\zb} \right) e^{-2\Lambda} + \omega^2 \frac{h^2+1}{4} \\
&= \omega^2 e^{-2 \Lambda} \left( \frac{\omega_z}{\omega} \right)_{\zb} + \omega^2 \frac{h^2+1}{4}  =  \omega^2 e^{-2 \Lambda} \left( \frac{\omega_{\zb}}{\omega} \right)_{z} + \omega^2 \frac{h^2+1}{4}.
\end{aligned}
\end{equation}

\tocless\subsection{Formulas in $\s^{4,1}$}
\label{formulas41}
This section is devoted to computations for spacelike immersions in $\s^{4,1}$ without relying on their being the conformal Gauss map of a given immersion.

 Let $Y \, : \, D \rightarrow \s^{4,1}$ be a smooth-spacelike conformal immersion, that is $Y$ satisfies 
$$\left\langle Y_z , Y_z \right\rangle = 0$$
and
$$\left\langle Y_z , Y_{\zb} \right\rangle =: \frac{e^{2 \mathcal{L}}}{2} >0.$$   
Let  $\nu, \, \nu^* \in \mathcal{C}^{4,1}$ such that $e = \left( Y, Y_z ,Y_{\zb}, \nu, \nu^* \right)$ is an orthogonal frame of $\R^{4,1}$, that is 
$$\left\langle Y, \nu \right\rangle = \left\langle Y_z, \nu \right\rangle = \left\langle Y_{\zb}, \nu \right\rangle = \left\langle \nu, \nu \right\rangle =0$$
and
$$\left\langle Y, \nu^* \right\rangle = \left\langle Y_z, \nu^* \right\rangle = \left\langle Y_{\zb}, \nu^* \right\rangle = \left\langle \nu^*, \nu^* \right\rangle =0.$$

We define successively
the tracefree curvature in the direction $\nu$
\begin{equation}
\label{defomeganu}
\Omega_\nu =2 \left\langle Y_{zz}, \nu \right\rangle,
\end{equation}
the tracefree curvature in the direction $\nu^*$
\begin{equation}
\label{defomeganustar}
\Omega_{\nu^*} =2 \left\langle Y_{zz}, \nu^* \right\rangle,
\end{equation}
the mean curvature in the direction $\nu$
\begin{equation}
\label{defHnu}
H_\nu = 2 e^{-2 \mathcal{L} } \left\langle Y_{z \zb}, \nu \right\rangle,
\end{equation}
 and the mean curvature in the direction $\nu^*$
\begin{equation}
\label{defHnustar}
H_{\nu^*} = 2 e^{-2 \mathcal{L} } \left\langle Y_{zz}, \nu^* \right\rangle.
\end{equation}

Then 
\begin{equation}
\label{Yzz}
Y_{zz} = 2 \mathcal{L}_z Y_z + \frac{  \Omega_\nu}{2 \langle \nu , \nu^* \rangle } \nu^* +  \frac{  \Omega_{\nu^*}}{2 \langle \nu , \nu^* \rangle }  \nu,
\end{equation}
and
\begin{equation}
\label{Yzzb}
Y_{z\zb}= \frac{H_\nu e^{2 \mathcal{L} } }{2 \langle \nu , \nu^* \rangle } \nu^* + \frac{H_{\nu^*}e^{2 \mathcal{L} }}{2 \langle \nu , \nu^* \rangle }  \nu  - \frac{e^{ 2 \mathcal{L} }}{2} Y.
\end{equation}

Further 
\begin{equation} \label{nuzY} \left\langle \nu_z, Y \right\rangle   = \left( \left\langle \nu , Y \right\rangle \right)_z -  \left\langle \nu , Y_z \right\rangle = 0, \end{equation} and with  (\ref{Yzz}),
\begin{equation} \label{nuzYz} \begin{aligned}
\left\langle \nu_z, Y_z\right\rangle &=  \left( \left\langle \nu , Y_z \right\rangle \right)_z -  \left\langle \nu , Y_{zz} \right\rangle \\
&=  -2 \mathcal{L}_z \left\langle \nu , Y_z \right\rangle - \frac{  \Omega_\nu}{2 \langle \nu , \nu^* \rangle }  \langle \nu , \nu^* \rangle - \frac{  \Omega_\nu^*}{2 \langle \nu , \nu^* \rangle }  \langle \nu , \nu \rangle \\
&= - \frac{ \Omega_\nu}{2},
\end{aligned} \end{equation} while with (\ref{Yzzb}),
\begin{equation} \label{nuzYzb} \begin{aligned}
\left\langle \nu_z, Y_{\zb}\right\rangle &=  \left( \left\langle \nu , Y_{\zb} \right\rangle \right)_z -  \left\langle \nu , Y_{z \zb} \right\rangle \\
&= -\frac{  H_\nu e^{2 \mathcal{L} } }{2 \langle \nu , \nu^* \rangle }  \langle \nu , \nu^* \rangle  \\
&= - \frac{ H_\nu e^{2 \mathcal{L} }}{2},
\end{aligned} \end{equation}
and
$$
\left\langle \nu_z, \nu \right\rangle = \left( \langle \nu , \nu \rangle \right)_z - \langle \nu , \nu_z \rangle, $$
meaning 
\begin{equation}
\label{nuznu} 
\left\langle \nu_z, \nu \right\rangle = 0.
\end{equation}

Combining (\ref{nuzY}), (\ref{nuzYz}), (\ref{nuzYzb}) and (\ref{nuznu}) yields
\begin{equation}
\label{nuzformule}
\nu_z=  - \left\langle \nu_z, \nu^* \right\rangle \nu - H_\nu Y_z - \Omega_\nu e^{-2 \mathcal{L} } Y_{\zb}.
\end{equation}
Similarly
\begin{equation} \label{nustarzY} \left\langle \nu^*_z, Y \right\rangle   = \left( \left\langle \nu^* , Y \right\rangle \right)_z -  \left\langle \nu^* , Y_z \right\rangle = 0, \end{equation} and with (\ref{Yzz}), 
\begin{equation} \label{nustarzYz} \begin{aligned}
\left\langle \nu^*_z, Y_z\right\rangle &=  \left( \left\langle \nu^* , Y_z \right\rangle \right)_z -  \left\langle \nu^* , Y_{zz} \right\rangle \\
&=  -2 \mathcal{L}_z \left\langle \nu^* , Y_z \right\rangle - \frac{  \Omega_{\nu^*}}{2 \langle \nu , \nu^* \rangle }  \langle \nu , \nu^* \rangle - \frac{  \Omega_\nu}{2 \langle \nu , \nu^* \rangle }  \langle \nu^* , \nu^* \rangle    \\
&= - \frac{ \Omega_{\nu^*}}{2},
\end{aligned} \end{equation} while with  (\ref{Yzzb})
\begin{equation} \label{nustarzYzb} \begin{aligned}
\left\langle \nu^*_z, Y_{\zb}\right\rangle &=  \left( \left\langle \nu^* , Y_{\zb} \right\rangle \right)_z -  \left\langle \nu^* , Y_{z \zb} \right\rangle \\
&= -\frac{  H_{\nu^*} e^{2 \mathcal{L} } }{2 \langle \nu , \nu^* \rangle }  \langle \nu , \nu^* \rangle   \\
&= - \frac{ H_{\nu^*} e^{2 \mathcal{L} }}{2},
\end{aligned} \end{equation}
\begin{equation}
\label{nustarznustar}
\langle \nu^*_z \nu^* \rangle = 0,
\end{equation}
\begin{equation}
\label{nustarzformule}
\nu^*_z=  - \left\langle \nu^*_z, \nu \right\rangle \nu^* - H_{\nu^*} Y_z - \Omega_{\nu^*} e^{-2 \mathcal{L} } Y_{\zb}.
\end{equation}
Then 
\begin{equation}
\label{nunustar}
\begin{aligned}
\left\langle \nu_z, \nu_z \right\rangle &= H_\nu \Omega_\nu \\
\left\langle \nu^*_z, \nu^*_z \right\rangle &= H_{\nu^*} \Omega_{\nu^*}.
\end{aligned}
\end{equation}

\addcontentsline{toc}{section}{Bibliography}
\bibliographystyle{plain}
\bibliography{bibliography}

\end{document}